\documentclass[12pt,reqno]{amsart} 
\usepackage[T1]{fontenc}
\usepackage{amsfonts}
\usepackage{amssymb}
\usepackage{mathrsfs}
\usepackage[latin1]{inputenc}
\usepackage{amsmath}
\usepackage{paralist}
\usepackage[english]{babel}
\usepackage{setspace}
\usepackage{bbm}

\textheight 8.9in \textwidth 5.8in \voffset -0.3in \hoffset -0.4in

\newtheorem{theorem}{Theorem}[section]
\newtheorem{lemma}[theorem]{Lemma}
\newtheorem{definition}[theorem]{Definition}
\newtheorem{remark}[theorem]{Remark}
\newtheorem{prop}[theorem]{Proposition}
\newtheorem{example}[theorem]{Example}
\newtheorem{corollary}[theorem]{Corollary}

\numberwithin{equation}{section}

\newcommand{\D}{{\mathbb D}}
\newcommand{\Z}{{\mathbb Z}}
\newcommand{\C}{{\mathbb C}}
\newcommand{\N}{{\mathbb N}}

\newcommand{\cL}{{\mathcal L}}

\newcommand{\cE}{{\mathcal E}}

\newcommand{\cP}{{\mathcal P}}

\newcommand{\cB}{{\mathcal B}}

\newcommand{\cM}{{\mathcal M}}

\newcommand{\be}{\beta}

\newcommand{\si}{\sigma}

\newcommand\Ker{\mathop{\rm Ker}}

\newcommand{à}{\`a}

\begin{document}

\title{Optimal domain of Volterra operators in Korenblum spaces}

\author{Angela\,A. Albanese, Jos\'e Bonet and Werner\,J. Ricker}

\thanks{\textit{Mathematics Subject Classification 2020:}
Primary 46E15, 47B38; Secondary 46E10, 47A10, 47A16, 47A35.}
\keywords{Volterra operator, optimal domain, Korenblum space}

\address{ Angela A. Albanese\\
Dipartimento di Matematica e Fisica
``E. De Giorgi''\\
Universit\`a del Salento- C.P.193\\
I-73100 Lecce, Italy}
\email{angela.albanese@unisalento.it}

\address{Jos\'e Bonet \\
Instituto Universitario de Matem\'{a}tica Pura y Aplicada
IUMPA \\
Universitat Polit\`ecnica de Val\`encia \\
E-46071 Valencia, Spain} \email{jbonet@mat.upv.es}

\address{Werner J.  Ricker \\
Math.-Geogr. Fakultät \\
 Katholische Universität
Eichst\"att-Ingol\-stadt \\
D-85072 Eichst\"att, Germany}
\email{werner.ricker@ku.de}

\begin{abstract}
The aim of this article is to study the largest domain space $[T,X]$, whenever it exists, of a given continuous linear operator $T\colon X\to X$, where $X\subseteq H(\D)$ is a Banach space of analytic functions on the open  unit disc $\D\subseteq \C$. That is, $[T,X]\subseteq H(\D)$ is the \textit{largest} Banach space of analytic functions containing $X$ to which $T$ has a continuous, linear, $X$-valued extension $T\colon [T,X]\to X$. The class of operators considered consists of generalized Volterra operators $T$ acting in the Korenblum growth Banach spaces $X:=A^{-\gamma}$, for $\gamma>0$. Previous studies dealt with the classical Cesàro operator $T:=C$ acting in the Hardy spaces $H^p$, $1\leq p<\infty$, \cite{CR}, \cite{CR1}, in $A^{-\gamma}$, \cite{ABR-R}, and more recently, generalized Volterra operators $T$ acting in $X:=H^p$, \cite{BDNS}.
\end{abstract}

\maketitle

\markboth{A.\,A. Albanese, J. Bonet and W.\,J. Ricker}%
{\MakeUppercase{Optimal domain of Volterra operators}}

\section{Introduction and Preliminaries}

Let $X$, $Y$ be Banach spaces (of functions) and $T\colon X\to Y$ be a linear operator, perhaps given by an explicit formula or arising from an inequality which makes $T$ continuous. Situations occur, for instance, whenever there exists $f\not\in X$ such that $Tf\in Y$, when it is desirable to determine the possibility of extending  $T$ beyond its initial domain space $X$ (but still taking its values in $Y$) to a larger space of functions $Z$ containing $X$. Ideally, $Z$ should be the largest or \textit{optimal} domain for the extension of $T$. To make a point, no one would be content with specifying the $c_0(\Z)$-valued Fourier transform operator on, say, $L^2([-\pi,\pi])$; its natural optimal domain (within the realm of function spaces) should be $L^1([-\pi,\pi])$. Of course, more substantial examples exist. As a sample, we point out that
 various classical inequalities (for example, the Sobolev inequality, \cite{A}, \cite{CRN}, \cite{B}, the Hausdorff-Young inequality, \cite{C}, and so on) have been shown to remain valid for larger domain spaces of functions than those in which they were initially formulated. The same is true for well known classes of operators acting between spaces of measurable functions, such as kernel operators, \cite{D}, convolution operators, \cite{E}, Fourier multiplier operators, \cite{F}, and others; see also \cite{G} and the references there in. 
 
 The theme of this paper is analogous to that described above. However, the setting is now rather different and will require new methods and techniques. The aim is 
 to investigate the \textit{extension procedure} alluded to above  for linear operators acting between Banach spaces of \textit{analytic functions} on the open unit disc $\D\subseteq \C$. The ambient space involved is now the Fr\'echet space $H(\D)$ consisting of all analytic functions on $\D$, equipped with the topology of   uniform convergence on the  compact subsets of $\D$. Given is a Banach space $X\subseteq H(\D)$, containing the polynomials, for which the natural inclusion map is continuous and a continuous linear operator $T\colon X\to X$. The fundamental question is as follows: do there exist further Banach spaces $Z\subseteq H(\D)$ such that $X\subseteq Z$ and $T\colon X\to X$ has a continuous, linear, $X$-valued extension $T\colon Z\to X$? If so, is there a \textit{largest} such space $Z$, the so called \textit{optimal domain space} of $T$, and can it be identified? In some cases the optimal domain $Z$ is actually a \textit{new} Banach space of analytic functions not available before, which generates an interest to determine structural properties of $Z$. Moreover, the factorization of $T\colon X\to X$ as $T_Z\circ j$, where $j\colon X\to Z $ is the inclusion map and $T_Z\colon Z\to X$ is the extension of $T$, provides a technique to examine operator ideal properties such as compactness, weak compactness, etc.
 The above question was first introduced (and solved) for the classical Cesàro operator $C:=T$ acting in the family of Hardy spaces $X:=H^p$, for $1\leq p<\infty$, \cite{CR}, \cite{CR1}, and in $X:=A^{-\gamma}$, for $\gamma>0$, \cite{ABR-R}. In the recent article \cite{BDNS} a similar investigation was undertaken for the family of generalized Volterra operators $T:=V_g$, with $g\in BMO\cap H^2$, acting in $H^p$ for $1\leq p<\infty$. If $g(z)=-{\rm Log}(1-z)$, for $z\in\D$, then $V_g(f)(z)=zC(f)(z)$, for $f\in H^p$ and $z\in\D$.

In this article we undertake a detailed study  of the optimal domain space for the particular class of (Volterra) integral operators given by  $(V_gf)(z)=\int_0^z f(\xi)g'(\xi)\,d\xi$, for $z\in \D$ and $f\in X$, with the function $g\in H(\D)$ coming from  the Bloch spaces $\cB$ and $\cB_0$, and where $X$ varies through the family of Korenblum growth Banach spaces $A^{-\gamma}$ and $A^{-\gamma}_0$, for $\gamma>0$; see Section 3 for the definitions and notation. We caution the reader that the above operators $V_g$ correspond to those which are denoted by $T_g$ in \cite{BDNS} and vice versa. The notation $T_g$ is used in this paper for another operator defined in Section 5. 

It is clear from the above discussion that there is a need to clearly formulate various concepts and to identify a precise abstract setting for the ``optimal extension procedure''; see Definition \ref{Def-B}. This is carried out in Section 2, which not only creates the framework for this paper but, also for possible future research on this topic in general. In particular, the basic properties of the optimal domain space $[T,X]$ for a continuous linear operator $T\colon X\to X$, with $X\subseteq H(\D)$ a Banach space of analytic functions on $\D$, are established in Propositions \ref{P1}--\ref{P3}.

In Section 3 we concentrate on the \textit{Korenblum growth spaces} $A^{-\gamma}$ and $A^{-\gamma}_0$, for $\gamma>0$, all of which are Banach spaces of analytic functions on $\D$. The class of operators which  we consider consists of the Volterra operators $V_g$, for $g\in\cB$ (resp. $g\in \cB_0$), defined above, which are necessarily continuous (cf. Propositions \ref{P.Con} and \ref{P.Comp}) and whose optimal domain space $[V_g,A^{-\gamma}]$ (resp. $[V_g,A^{-\gamma}_0]$) turns out to also be a Banach space of analytic functions on $\D$;  see Proposition \ref{P.D1} and Corollary \ref{CoroE}. An alternative description of these optimal domain spaces is given in Proposition \ref{P.Description}; see also Remark \ref{R1}. In Example \ref{E1} a class of functions $g\in\cB$ is presented for which $A^{-\gamma}\subsetneqq [V_g,A^{-\gamma}]$, that is, $V_g\colon A^{-\gamma}\to A^{-\gamma}$ has a \textit{genuine} $A^{-\gamma}$-valued extension to $[V_g,A^{-\gamma}]$.

Each function $h\in H(\D)$ generates the continuous linear operator $M_h\colon H(\D)\to H(\D)$ of pointwise multiplication by $h$. Given a pair of Banach spaces of analytic functions on $\D$, say $X$ and $Y$, a function $h\in H(\D)$ is called a \textit{multiplier} for $X$, $Y$ if $M_h(X)\subseteq Y$. The space of all such  multipliers $h$ is denoted by $\cM(X,Y)$ or, if $X=Y$, simply by $\cM(X)$. Since $[T,X]$ is typically a Banach space of analytic functions on $\D$, it is natural to study its multiplier space $\cM([T,X])$. For the optimal domain space $[C,H^p]$ of the Cesàro operator $C$, for $1\leq p<\infty$, it is known that $\cM([C,H^p])=H^p$, \cite[Theorem 3.7]{CR}, and for the generalized Volterra operator $T_g$, whenever $g\in BMO\cap H^2$, considered in \cite{BDNS} that $\cM([T_g, H^p])=H^\infty$ for each $1\leq p<\infty$; see Theorem 4 there. In Section 4 we establish that the multiplier space $\cM([V_g, A^{-\gamma}])=H^\infty$ and $\cM([V_g, A^{-\gamma}_0])=H^\infty$ for each $\gamma>0$ and $g\in\cB$; see Proposition \ref{P.Mult} and Corollary \ref{coroG}. For the Korenblum spaces themselves it is also shown, whenever $0<\gamma<\delta$, that $\cM(A^{-\gamma}, A^{-\delta})=A^{-(\delta-\gamma)}=\cM(A^{-\gamma}_0, A^{-\delta}_0)$; see Proposition \ref{PH=} and also \cite[Proposition 5]{BDNS}, \cite[Proposition 3.1]{Cont-Diaz}. Moreover, Proposition \ref{L_M} reveals that $\cM(A^{-\gamma}, A^{-\delta}_0)=A^{-(\delta-\gamma)}_0$.

In the final Section 5 we turn  our  attention to the \textit{weighted} Banach spaces of analytic functions $H^\infty_v$ and $H^0_v$ on $\D$ for certain weight functions $v\colon [0,1)\to (0,\infty)$. The family of operators considered consists of the Cesàro operator $C$ together with $V_g$ for certain functions $g$. Aleman and Persson have made a detailed study of generalized Cesàro operators acting in various Banach spaces of analytic functions, \cite{AP}, \cite{AleSi1}, \cite{P}, including the spaces $A^{-\gamma}$ and $A^{-\gamma}_0$; see also \cite{ABR-R}. We begin by considering properties of the optimal domain spaces $[V_g, H^\infty_v]$ and $[T_g, H^\infty_v]$ and also of  $[V_g, H^0_v]$ and $[T_g, H^0_v]$, where $T_g$ is given by \eqref{eq.T}; see Proposition \ref{P.Ugu}. Of particular interest is the Cesàro operator $C=T_{g_0}$ with $g_0(z)=-{\rm Log}(1-z)$, for $z\in\D$. It turns out that the optimal domain spaces $[C,A^{-\gamma}]$ and  $[V_{g_0},A^{-\gamma}]$ with $\gamma>0$ (resp. $[C,A^{-\gamma}_0]$ and $[V_{g_0},A^{-\gamma}_0]$) are \textit{genuinely larger} than their initial domain spaces $A^{-\gamma}$ (resp. $A^{-\gamma}_0$); see Proposition \ref{P.DomainC}. Also Propositions \ref{I}--\ref{P.Notclosed} and Proposition \ref{P.B0} exhibit related features. In Proposition \ref{P.NoCl} a large class of functions $g$ in $\cB$ is exhibited for which both of the operators $V_g\colon A^{-\gamma}\to A^{-\gamma}$ and $V_g\colon A^{-\gamma}_0\to A^{-\gamma}_0 $ fail to have closed range; see Example \ref{E2} for some particular functions $g$ to which Proposition \ref{P.NoCl} applies.

\medskip

Given Fr\'echet spaces $X$ and $Y$,  denote by $\cL(X,Y)$ the space of all continuous linear operators from $X$ into $Y$. For the case when $X=Y$,  we simply write $\cL(X)$ for $\cL(X,X)$. If both $X$ and $Y$ are Banach spaces then, for the operator norm $\|T\|_{X\to Y}:=\sup_{\|x\|_X\leq 1}\|Tx\|_Y$, the space $\cL(X,Y)$ is a Banach space.
Equipped with the topology of pointwise convergence on a Fr\'echet space $X$ (i.e., the strong operator topology $\tau_s$) the quasi-complete locally convex Hausdorff space $\cL(X)$ is denoted by $\cL_s(X)$. The range $T(X):=\{Tx:\ x\in X\}$ of $T\in \cL(X)$ is also denoted by ${\rm Im}(T)$. Furthermore, $\Ker(T):=\{x\in X:\, Tx=0\}$.

Let $X$ be a Fr\'echet space.
The identity operator on $X$ is written as $I$.  The \textit{transpose operator} of $T\in \cL(X)$ is denoted by  $T^*$; it acts from the topological dual space $X^*:=\cL(X,\C)$ of $X$ into itself. Denote by $X^*_\si$ (resp., by $X^*_\beta$) the space $X^*$ equipped with the weak* topology $\si(X^*,X)$ (resp., with the strong topology $\beta(X^*,X)$). It is known that $X^*_\si$ is quasicomplete with $T^*\in \cL(X^*_\si)$ and that $T^*\in \cL(X^*_\be)$,  \cite[p.134]{24}. The bi-transpose operator $(T^*)^*$ of $T$ is simply denoted by $T^{**}$ and belongs to $\cL((X^*_\beta)^*_\beta)$. In the event that $X$ is a Banach space, both $X^*_\beta$ (denoted simply by $X^*$) and $(X^*_\beta)^*_\beta$ (denoted simply by $X^{**}$) are again Banach spaces. The dual norm in $X^*$ is given by $\|x^*\|:=\sup_{\|x\|\leq 1}|\langle x,x^*\rangle|$, for $x^*\in X^*$.

A linear map $T\colon X\to Y$, with $X$ and $Y$ Banach spaces, is called \textit{compact} if  $T(B_X)$ is a relatively compact set in $Y$, where $B_X$ denotes the closed unit ball  of $X$. It is routine to show that necessarily $T\in \cL(X,Y)$.

\section{General properties concerning optimal domains}
Denote by $H(\D)$ the space of all analytic functions on the open unit disc $\D:=\{z\in\C\,:\, |z|<1\}$.
The space $H(\D)$ is equipped with the topology $\tau_c$ of uniform convergence on the compact subsets of  $\D$. According to \cite[\S 27.3(3)]{23} the space $H(\D)$ is a Fr\'echet-Montel space. An increasing  family of norms generating $\tau_c$ is given, for each $0<r<1$, by
\begin{equation}\label{eq.norme-sup}
	q_r(f):=\sup_{|z|\leq r}|f(z)|,\quad f\in H(\D).
\end{equation}
We identify  a function $f\in H(\D)$ with its sequence of Taylor coefficients $\hat{f}:=(\hat{f}(n))_{n\in\N_0}$ (i.e., $\hat{f}(n):=\frac{f^{(n)}(0)}{n!}$,  for $n\in\N_0$),  so that $f(z)=\sum_{n=0}^\infty \hat{f}(n)z^n$, for $z\in\D$. Given $z\in\D$, it is clear from \eqref{eq.norme-sup} that each evaluation functional $\delta_z\colon  f\mapsto f(z)$, for $f\in H(\D)$, is linear and continuous, that is, $\delta_z\in H(\D)^*$.

A vector space $X\subseteq H(\D)$ is called a \textit{Banach space of analytic functions} on $\D$ if it is a Banach space relative to a norm $\|\cdot \|_X$ and the natural inclusion
of $X$ into $H(\D)$ is continuous.

\begin{lemma}\label{Lemma A} Let $X$ be a Banach space such that $X\subseteq H(\D)$ as linear spaces. Then the natural inclusion map of $X$ into $H(\D)$ is continuous if and only if $\{\delta_z:\ z\in \D\}\subseteq X^*$.
	\end{lemma}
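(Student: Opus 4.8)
The plan is to prove the two implications separately; the forward direction is immediate from the continuity of the evaluations on all of $H(\D)$, while the reverse direction will rest on the Uniform Boundedness Principle.

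For the forward implication I would assume the natural inclusion $j\colon X\to H(\D)$ is continuous. Since it has already been noted that $\delta_z\in H(\D)^*$ for every $z\in\D$, each restriction $\delta_z|_X=\delta_z\circ j$ is a composition of continuous linear maps and therefore lies in $X^*$, giving $\{\delta_z:\ z\in\D\}\subseteq X^*$ at once.

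For the reverse implication, assume $\{\delta_z:\ z\in\D\}\subseteq X^*$. Because $\tau_c$ is generated by the increasing family of seminorms $q_r$ from \eqref{eq.norme-sup}, it suffices to show that for each fixed $0<r<1$ there is a constant $C_r>0$ with $q_r(f)\le C_r\|f\|_X$ for all $f\in X$. Fixing such an $r$, I would look at the family $\{\delta_z:\ |z|\le r\}\subseteq X^*$ and observe that for each fixed $f\in X$ the function $f$, being analytic and hence continuous on the compact set $\{|z|\le r\}$, satisfies $\sup_{|z|\le r}|\langle f,\delta_z\rangle|=q_r(f)<\infty$. Thus the family is pointwise bounded on the Banach space $X$, and the Uniform Boundedness Principle yields $C_r:=\sup_{|z|\le r}\|\delta_z\|_{X^*}<\infty$. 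The estimate $q_r(f)=\sup_{|z|\le r}|\langle f,\delta_z\rangle|\le C_r\|f\|_X$ then follows directly and delivers continuity of the inclusion.

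The only nonroutine step is this appeal to the Uniform Boundedness Principle, which converts the easy pointwise boundedness of the evaluation functionals (a consequence of the analyticity of each individual $f$ on compact subdiscs) into a uniform bound on their dual norms; the remainder is just a translation between the seminorm description of $\tau_c$ and continuity of the inclusion map.
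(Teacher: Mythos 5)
Your proof is correct, but the reverse implication takes a genuinely different route from the paper's. The paper verifies that the inclusion $X\to H(\D)$ has a closed graph: if $f_n\to 0$ in $X$ and $f_n\to g$ in $H(\D)$, then the hypothesis $\delta_z\in X^*$ together with $\delta_z\in H(\D)^*$ forces $g(z)=0$ for every $z\in\D$, hence $g=0$; the closed graph theorem for Fr\'echet spaces then gives continuity. You instead apply the Uniform Boundedness Principle to the family $\{\delta_z:\ |z|\le r\}\subseteq X^*$, whose pointwise boundedness on $X$ is automatic because each $f\in X$ is analytic and hence bounded on compact subdiscs; this yields $C_r=\sup_{|z|\le r}\|\delta_z\|_{X^*}<\infty$ and the explicit estimate $q_r(f)\le C_r\|f\|_X$. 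Both arguments ultimately rest on completeness of $X$ via Baire category, so neither is more elementary, but they buy different things: your route produces quantitative bounds and in effect proves the stronger statement that $\{\delta_w:\ w\in K\}$ is bounded in $X^*$ for every compact $K\subseteq\D$, which is precisely condition (ii) appearing later in Proposition \ref{P3}; the paper's softer closed-graph argument, on the other hand, is the template it reuses repeatedly afterwards (for instance in Lemma \ref{LemmaC} and Proposition \ref{P.D1}), where explicit norm estimates are not available.
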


\begin{proof} Suppose that $X\subseteq H(\D)$ continuously. Fix $z\in \D$. Let $(f_n)_{n\in\N}\subseteq X$ satisfy $\|f_n\|_X\to 0$ as $n\to\infty$. Then also $f_n\to 0$ in $H(\D)$ as $n\to\infty$. Since $\delta_z\in H(\D)^*$ it follows that $f_n(z)=\langle f_n,\delta_z\rangle\to 0$ in $\C$  as $n\to\infty$. This shows that $\delta_z\colon X\to \C$ is continuous, that is $\delta_z\in X^*$.
	
	Suppose now that $\{\delta_z:\, z\in \D\}\subseteq X^*$. Let $(f_n)_{n\in\N}\subseteq X$ satisfy $f_n\to 0$ in $X$ and $f_n\to g$ in $H(\D)$, for some $g\in H(\D)$,  as $n\to\infty$. Fix $z\in \D$. Since $\delta_z\in X^*$, it follows that $f_n(z)=\langle f_n,\delta_z\rangle\to 0$ in $\C$ as $n\to\infty$. Also $\delta_z\in H(\D)^*$ and so $\langle f_n,\delta_z\rangle\to \langle g, \delta_z\rangle$, that is, $f_n(z)\to g(z)$ in $\C$ as $n\to\infty$. Hence, $g(z)=0$. Since $z\in\D$ is arbitrary, we may conclude that $g=0$ in $H(\D)$. Accordingly, the inclusion map $X\subseteq H(\D)$ is a closed, linear map which implies its continuity by the closed graph theorem for Fr\'echet spaces.
	\end{proof}

Let $(X,\|\cdot\|_X)$ be a Banach space of analytic functions on $\D$ and $T\colon H(\D)\to H(\D)$ be a continuous,  linear operator satisfying $T(X)\subseteq X$. By the closed graph theorem it follows that $T\in \cL(X)$. Indeed, if $(f_n)_{n\in\N}\subseteq X$ satisfies both $f_n\to 0 $ in $X$ and $Tf_n\to g$ in $X$ for some $g\in X$ as $n\to\infty$, then also $f_n\to 0$ in $H(\D)$ and $Tf_n\to g$ in $H(\D)$ as $n\to\infty$. But, $T\in \cL(H(\D))$. This implies that $Tf_n\to 0$ in $H(\D)$ as $n\to\infty$. So, $g=0$. Hence, $T\colon X\to X$ is a closed operator.

The following notion first occurs in \cite{CR} for the Cesàro operator $T:=C$ acting in $X=H^p$.

\begin{definition}\label{Def-B} The \textit{optimal domain} of an operator $T\in\cL(H(\D))$ satisfying $T(X)\subseteq X$, with $X$ a Banach space of analytic functions on $\D$, is the linear subspace
 \[
 [T,X]:=\{f\in H(\D)\colon\ Tf\in X\}
 \]
 of $H(\D)$  endowed with the semi-norm
 \[
 \|f\|_{[T,X]}:=\|Tf\|_X,\quad f\in [T,X].
 \]
 \end{definition}

Let $X$ be a Banach space of analytic functions on $\D$ and $T\in \cL(H(\D))$ satisfy $T(X)\subseteq X$. A Banach space $Z$ of analytic functions on $\D$ is said to be a $(T,X)$-\textit{admissible space} if it satisfies
\begin{equation}\label{Sigma}
	T(Z)\subseteq X\subseteq Z.
\end{equation}
Then the map $T_Z\colon Z\to X$ defined by $T_Zf:=Tf$, for $f\in Z$, is an $X$-valued, linear  extension of $T\colon X\to X$ which, in this notation, can  also be written as $T_X$.

\begin{lemma}\label{LemmaC} Let $X$ be a Banach space of analytic functions on $\D$ and $T\in \cL(H(\D))$ satisfy $T(X)\subseteq X$. Let $Z$ be a $(T,X)$-admissible space.
\begin{itemize}
	\item[\rm (i)] The inclusion $X\subseteq Z$ is continuous.
	\item[\rm (ii)] The linear  map $T_Z\colon Z\to X$ is continuous and its restriction to $X$ coincides with $T\colon X\to X$.
\end{itemize}
	\end{lemma}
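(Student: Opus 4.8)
The plan is to prove both parts using the closed graph theorem, exploiting the fact that the relevant spaces all sit continuously inside the Fréchet space $H(\D)$, so that convergence in any of these Banach spaces forces coordinatewise (indeed locally uniform) convergence in $H(\D)$, and these limits can be compared.

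For part (i), I would show the inclusion $X \subseteq Z$ is closed and then invoke the closed graph theorem for Banach spaces. Concretely, take a sequence $(f_n)_{n\in\N} \subseteq X$ with $f_n \to 0$ in $X$ and $f_n \to g$ in $Z$ for some $g \in Z$. Since both $X$ and $Z$ are Banach spaces of analytic functions, both inclusions into $H(\D)$ are continuous, so $f_n \to 0$ in $H(\D)$ and simultaneously $f_n \to g$ in $H(\D)$. By uniqueness of limits in the Hausdorff space $H(\D)$ we get $g = 0$, so the inclusion map $X \hookrightarrow Z$ has closed graph and is therefore continuous.

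For part (ii), the argument is parallel. First note that $T_Z$ is well defined and $X$-valued precisely because $T(Z) \subseteq X$ by the admissibility condition \eqref{Sigma}, and its restriction to $X$ is $T$ by construction. To get continuity, I would again show the graph is closed: suppose $(f_n)_{n\in\N} \subseteq Z$ with $f_n \to 0$ in $Z$ and $T_Z f_n = T f_n \to h$ in $X$ for some $h \in X$. Continuity of the inclusion $Z \subseteq H(\D)$ gives $f_n \to 0$ in $H(\D)$, and since $T \in \cL(H(\D))$ we obtain $T f_n \to 0$ in $H(\D)$. On the other hand, continuity of $X \subseteq H(\D)$ applied to $T f_n \to h$ gives $T f_n \to h$ in $H(\D)$. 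Hence $h = 0$ in $H(\D)$, so $T_Z$ has closed graph and is continuous by the closed graph theorem.

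The two parts are genuinely routine once one has the key idea, which is the recurring theme already used in Lemma \ref{Lemma A} and in the paragraph establishing $T \in \cL(X)$: every Banach space of analytic functions embeds continuously into the common ambient Fréchet space $H(\D)$, and the Hausdorff property of $H(\D)$ lets one identify limits. The only point requiring a little care — and the closest thing to an obstacle — is being precise about which continuity of inclusions one invokes at each comparison step, and confirming that $H(\D)$-convergence can legitimately be compared with both $X$-convergence and $Z$-convergence; this is exactly what guarantees that the two differently-obtained $H(\D)$-limits must agree and forces the limit to vanish.
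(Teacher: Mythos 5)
Your proof is correct, and it follows the same overall strategy as the paper: both parts are closed-graph arguments that identify limits through the ambient Fr\'echet space. The differences are in the mechanism used to identify the limits. In part (i) the difference is cosmetic: the paper tests convergence against the point evaluations $\delta_w$ (invoking Lemma \ref{Lemma A} to place them in $X^*$ and $Z^*$), whereas you use the continuous embeddings $X\subseteq H(\D)$ and $Z\subseteq H(\D)$ directly and appeal to uniqueness of limits in $H(\D)$; by Lemma \ref{Lemma A} these are the same thing. In part (ii) your route is genuinely a bit different and, in fact, more self-contained: you push everything into $H(\D)$ and use only the hypothesis $T\in\cL(H(\D))$ to get $Tf_n\to 0$ there, then compare with the $H(\D)$-limit of $Tf_n\to h$ coming from the embedding of $X$. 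The paper instead stays inside $Z$: it uses part (i) to convert $Tp_n\to q$ in $X$ into convergence in $Z$, and then invokes the fact that $T$ restricts to an operator in $\cL(Z)$ (established by the closed-graph discussion preceding Definition \ref{Def-B}, using $T(Z)\subseteq Z$ from \eqref{Sigma}) to conclude $Tp_n\to 0$ in $Z$. Your version avoids both of those ingredients -- part (ii) does not depend on part (i) or on $T\in\cL(Z)$ -- at the cost of leaning once more on the ambient space; the paper's version buys the observation, used elsewhere, that admissibility forces $T$ to act continuously on $Z$ itself. Both arguments are complete and correct.
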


\begin{proof} (i) Let $\Phi\colon X\to Z$ denote the natural inclusion map. Suppose that $(f_n)_{n\in\N}\subseteq X$ satisfies $f_n\to 0$ in $X$ and $\Phi f_n\to g$ in $Z$, for some $g\in Z$. Lemma \ref{Lemma A} implies that $\{\delta_z:\, z\in \D\}\subseteq X^*$. Accordingly, for any $w\in\D$, we have $f_n(w)=\langle f_n,\delta_w\rangle\to 0$ in $\C$ as $n\to\infty$.  Lemma \ref{Lemma A} also implies that $\{\delta_z:\, z\in \D\}\subseteq Z^*$ and so $f_n(w)=\langle \Phi f_n, \delta_w\rangle\to \langle g,\delta_w\rangle=g(w)$ in $\C$ as $n\to\infty$. Accordingly, $g(w)=0$. Since $w\in\D$ is arbitrary, we have $g=0$. The closed graph theorem ensures that $\Phi$ is continuous.
	
	(ii) It is clear from the definition of $T_Z$ that it is linear and its restriction to $X$ coincides with $T\colon X\to X$. Let $(p_n)_{n\in\N}\subseteq Z$ satisfy $p_n\to 0$ in $Z$ and $T_Zp_n\to q$ in $X$, for some $q\in X$. Since $(T_Zp_n)_{n\in\N}=(Tp_n)_{n\in\N}\subseteq X$, it follows from part (i) that $Tp_n\to q$ in $Z$ as $n\to\infty$. Moreover, $T(Z)\subseteq Z$ by \eqref{Sigma} and so, via the discussion prior to Definition  \ref{Def-B}, we know that $T\in \cL(Z)$. Hence, $p_n\to 0$ in $Z$ implies that $Tp_n\to 0$ in $Z$ and so we can conclude that $q=0$. By the closed graph theorem $T_Z\colon Z\to X$ is continuous.
\end{proof}

Lemma \ref{LemmaC} states, whenever $Z$ is a $(T,X)$-admissible space,  that $T_Z\colon Z\to X$ is a \textit{continuous}, $X$-valued, linear extension of $T\colon X\to X$ from $X$ to $Z$. Our aim is to show that the choice $Z=[T,X]$ yields the \textit{largest} $(T,X)$-admissible space, which thereby justifies the terminology ``optimal domain of $T$'' used in  Definition \ref{Def-B}. First we require some preparation.

 \begin{prop}\label{P1} Let $X$ be a Banach space of analytic functions on $\D$ and $T\in\cL(H(\D))$  satisfy $T(X)\subseteq X$, in which case $T\in \cL(X)$. Then the following properties are satisfied.
 	\begin{itemize}
 		\item[\rm (i)] $[T,X]\subseteq H(\D)$.
 		\item[\rm (ii)] Suppose that $T\in \cL(H(\D))$ is injective. Then  $([T,X], \|\cdot\|_{[T,X]})$ is a normed space.
 		\item[\rm (iii)] $X\subseteq [T,X]$ with a continuous inclusion from $X$ into the semi-normed space $[T,X]$.
 		\item[\rm (iv)] The operator $T_{[T,X]}\colon [T,X]\to X$,defined by  $f\mapsto Tf$, is linear and continuous from the semi-normed space $[T,X]$ into $X$.
 		\item[\rm (v)] Let $Y$ be a closed subspace of $X$ satisfying $T(Y)\subseteq Y$. Then $Y$ is  a Banach space of analytic functions on $\D$ and $[T,Y]$ is a closed subspace of $[T,X]$.
 		\item[\rm (vi)] $\Ker(T_X)=X\cap\Ker(T_{[T,X]})\subseteq \Ker(T_{[T,X]})\subseteq \Ker(T)$ with $T\in \cL(H(\D))$.
 	\end{itemize}
 	\end{prop}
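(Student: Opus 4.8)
The plan is to read off each assertion directly from Definition \ref{Def-B}, using only that $X\subseteq H(\D)$ continuously together with the fact that $T\in\cL(X)$ (established in the discussion preceding Definition \ref{Def-B}). Parts (i) and (ii) are essentially definitional. For (i), every $f\in[T,X]$ lies in $H(\D)$ by construction, and $[T,X]$ is a linear subspace because $T$ is linear and $X$ is a vector space. For (ii), I would verify that the seminorm separates points once $T$ is injective: if $\|f\|_{[T,X]}=\|Tf\|_X=0$, then $Tf=0$ as an element of $X$, hence $Tf=0$ in $H(\D)$, and injectivity of $T$ on $H(\D)$ forces $f=0$.

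Parts (iii) and (iv) both flow from a single identity. For (iii), the hypothesis $T(X)\subseteq X$ gives $Tf\in X$ whenever $f\in X$, so $X\subseteq[T,X]$; continuity of the inclusion is the bound $\|f\|_{[T,X]}=\|Tf\|_X\leq\|T\|_{X\to X}\|f\|_X$, valid since $T\in\cL(X)$. For (iv), the very definition of the norm yields $\|T_{[T,X]}f\|_X=\|Tf\|_X=\|f\|_{[T,X]}$ for all $f\in[T,X]$, so $T_{[T,X]}$ is in fact an isometry and is therefore continuous.

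Part (v) is where I would spend the most attention. First, $Y$ is a closed subspace of the Banach space $X$, hence complete and so itself a Banach space, and the inclusion $Y\hookrightarrow H(\D)$ is continuous as the composite of $Y\hookrightarrow X$ and $X\hookrightarrow H(\D)$; thus $Y$ is a Banach space of analytic functions on $\D$. Since $Y\subseteq X$, one has $Tf\in Y\Rightarrow Tf\in X$, giving $[T,Y]\subseteq[T,X]$, and because $Y$ carries the restriction of $\|\cdot\|_X$ the two norms agree on $[T,Y]$, namely $\|f\|_{[T,Y]}=\|Tf\|_Y=\|Tf\|_X=\|f\|_{[T,X]}$. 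To show $[T,Y]$ is closed in $[T,X]$, I would take $(f_n)_{n\in\N}\subseteq[T,Y]$ with $f_n\to f$ in $[T,X]$; then $\|Tf_n-Tf\|_X\to 0$, so $Tf_n\to Tf$ in $X$, and since each $Tf_n\in Y$ with $Y$ closed in $X$, the limit $Tf$ belongs to $Y$, whence $f\in[T,Y]$.

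There is no substantial obstacle here: all five parts are routine consequences of the definitions and the continuity of the relevant inclusions. The only point demanding care is to remember that $\|\cdot\|_{[T,X]}$ is in general merely a seminorm (a genuine norm only under the injectivity hypothesis of (ii)), so that in (v) the notion of a ``closed subspace'' must be read in the topology induced by this seminorm; since that topology is pseudometrizable, closedness coincides with sequential closedness and the sequential argument above remains valid whether or not $T$ is injective.
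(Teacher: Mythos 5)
Your proof is correct and follows essentially the same route as the paper: parts (i)--(iv) are read off directly from Definition \ref{Def-B} (with the isometry observation for (iv) and the bound $\|Tf\|_X\leq\|T\|_{X\to X}\|f\|_X$ for (iii)), and part (v) uses exactly the paper's sequential argument that $Tf_n\to Tf$ in $X$ together with closedness of $Y$ in $X$. Your closing remark about sequential closedness in the seminorm (pseudometric) topology is a sensible clarification but does not change the argument.
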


 \begin{proof} 	(i) Clear from the  definition.
 	
 	(ii) Let $f\in [T,X]$ satisfy $\|f\|_{[T,X]}=0$. Then $f\in H(\D)$ and  $\|Tf\|_X=\|f\|_{[T,X]}=0$. Since $T$ is injective  when it acts on $H(\D)$, it follows that $f=0$. So, $\|\cdot \|_{[T,X]}$ is a norm on $[T,X]$.
 	
 	(iii) Since $T(X)\subseteq X$, it is clear that $X\subseteq [T,X]$. On the other hand, for every  $f\in X$ we have that  $\|f\|_{[T,X]}=\|Tf\|_X\leq \|T\|_{X\to X}\|f\|_X$. So, the inclusion $X\subseteq [T,X]$ is continuous.

 	(iv) Clearly, $T_{[T,X]}$ is linear. Moreover, given  $f\in [T,X]$ we have that $\|T_{[T,X]}f\|_X=\|Tf\|_X=\|f\|_{[T,X]}$. So, the  operator $T_{[T,X]}\colon [T,X]\to X$ is surely continuous.
 	
 	(v)  Clearly $Y$, endowed with the norm $\|\cdot\|_X$ inherited from $X$, is a Banach space of analytic functions on $\D$. Since $T(Y)\subseteq Y$, it is clear that the restriction  $T\colon Y\to Y$ of $T\in \cL(X)$ is continuous. Let $f\in [T,Y]$, that is, $f\in H(\D)$ satisfies $Tf\in Y$. Then also $Tf\in X$ and hence, $f\in [T,X]$. So, $[T,Y]\subseteq [T,X]$.
 	
 	Now, let $(f_n)_{n\in\N}\subseteq [T,Y]$ be a sequence convergent to some $f$ in $[T,X]$. This means that
 	\[
 	\|f_n-f\|_{[T,X]}=\|Tf_n-Tf\|_X\to 0,\quad \mbox{ as } n\to\infty.
 	\]
 	But, $(Tf_n)_{n\in\N}\subseteq Y$ and $Y$ is a closed subspace of $X$. It follows that $Tf\in Y$ and hence, that $f\in [T,Y]$. This shows that $[T,Y]$ is a closed subspace of $[T,X]$.
 	
 	(vi) Clear from the  definitions involved.
 	\end{proof}

 Combining Lemma \ref{LemmaC} and Proposition \ref{P1} yields the following result.

 \begin{prop}\label{PropD} Let $X$ be a Banach space of analytic functions on $\D$ and $T\in \cL(H(\D))$ satisfy $T(X)\subseteq X$. Suppose that $[T,X]$ is a Banach space. Then, amongst all $(T,X)$-admissible spaces, the optimal domain space $[T,X]$ of $T$ is the largest one.
 \end{prop}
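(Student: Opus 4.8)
The plan is to prove the two assertions that together express maximality: that $[T,X]$ is itself a $(T,X)$-admissible space, and that every $(T,X)$-admissible space $Z$ is contained in $[T,X]$ via a continuous inclusion. Since Lemma \ref{LemmaC} already attaches to each admissible $Z$ a continuous extension $T_Z\colon Z\to X$, ``largest'' should be understood in the sense of continuous inclusion, and this is exactly what the two assertions deliver.

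First I would verify that $[T,X]$ qualifies as a Banach space of analytic functions on $\D$. By hypothesis it is a Banach space under $\|\cdot\|_{[T,X]}$, so in particular this seminorm is a genuine norm; equivalently, for $f\in[T,X]$ the relation $\|f\|_{[T,X]}=\|Tf\|_X$ shows that $Tf=0$ forces $f=0$. It then remains to check that the inclusion $[T,X]\subseteq H(\D)$ is continuous, which I would obtain from the closed graph theorem (applicable since $[T,X]$ is Banach and $H(\D)$ is Fr\'echet). If $f_n\to 0$ in $[T,X]$ and $f_n\to g$ in $H(\D)$, then $\|Tf_n\|_X=\|f_n\|_{[T,X]}\to 0$, so $Tf_n\to 0$ in $X$ and hence in $H(\D)$; on the other hand $T\in\cL(H(\D))$ yields $Tf_n\to Tg$ in $H(\D)$, whence $Tg=0$. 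Consequently $g\in[T,X]$ with $\|g\|_{[T,X]}=0$, so $g=0$ by the norm property, and the inclusion is continuous. Admissibility of $[T,X]$ is then immediate: $T([T,X])\subseteq X$ holds directly by Definition \ref{Def-B}, while $X\subseteq[T,X]$ continuously is Proposition \ref{P1}(iii).

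Next I would establish maximality. Let $Z$ be an arbitrary $(T,X)$-admissible space, so that $T(Z)\subseteq X\subseteq Z$ with $Z\subseteq H(\D)$. For $f\in Z$ one has $Tf\in T(Z)\subseteq X$, and since $f\in H(\D)$ this gives $f\in[T,X]$; thus $Z\subseteq[T,X]$ as sets. For continuity of this inclusion I would invoke Lemma \ref{LemmaC}(ii), which provides $T_Z\in\cL(Z,X)$ with $T_Zf=Tf$: for every $f\in Z$ it follows that $\|f\|_{[T,X]}=\|Tf\|_X=\|T_Zf\|_X\le\|T_Z\|_{Z\to X}\,\|f\|_Z$, so $Z$ embeds continuously into $[T,X]$.

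The set-theoretic inclusions and the final norm estimate are routine; the one step that requires care is the verification that $[T,X]$ is a Banach space of analytic functions, i.e.\ that its inclusion into $H(\D)$ is continuous. The subtlety is that the hypothesis ``$[T,X]$ is a Banach space'' must be read as asserting that $\|\cdot\|_{[T,X]}$ is an honest norm rather than a mere seminorm, and it is precisely this that makes the closed graph argument close, by forcing the $H(\D)$-limit $g$ with $Tg=0$ to vanish. Once this is in hand, combining the admissibility of $[T,X]$ with the continuous containment $Z\subseteq[T,X]$ for all admissible $Z$ proves the proposition.
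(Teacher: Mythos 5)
Your proof is correct and follows the same two-step decomposition as the paper's own argument: first show that $[T,X]$ is itself a $(T,X)$-admissible space, then show that every admissible $Z$ is contained in $[T,X]$ (the second step, $f\in Z\Rightarrow Tf\in X\Rightarrow f\in[T,X]$, is word-for-word the paper's). Where you genuinely diverge is in how the first step is grounded. The paper reads its hypothesis as already saying that $[T,X]$ is a Banach space \emph{of analytic functions}, i.e.\ it takes the continuity of the inclusion $[T,X]\subseteq H(\D)$ for granted; in the paper's applications this continuity is supplied by Proposition \ref{P3}, whose implication (iii)$\Rightarrow$(i) assumes $T$ injective on $H(\D)$. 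You instead take the hypothesis literally (completeness of the seminormed space), observe that completeness forces $\|\cdot\|_{[T,X]}$ to be an honest norm, and then run a closed-graph argument in which the $H(\D)$-limit $g$ satisfies $Tg=0$, hence $g\in[T,X]$ with $\|g\|_{[T,X]}=0$, hence $g=0$. This is a variant of the paper's own proof of (iii)$\Rightarrow$(i) in Proposition \ref{P3}, but it replaces global injectivity of $T$ by the norm property, so your version of the proposition needs no injectivity assumption and derives the continuous embedding into $H(\D)$ rather than assuming it. You also record that the inclusion $Z\subseteq[T,X]$ is continuous, via Lemma \ref{LemmaC}(ii) and the estimate $\|f\|_{[T,X]}=\|T_Zf\|_X\leq\|T_Z\|_{Z\to X}\|f\|_Z$, thereby sharpening ``largest'' from set containment (all the paper proves) to continuous inclusion. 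Both refinements are correct and make the proposition self-contained.
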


\begin{proof} By hypothesis $[T,X]\subseteq H(\D)$ is a Banach space of analytic functions on $\D$. Moreover, $X\subseteq [T,X]$ continuously; see Proposition \ref{P1}(iii). Let $f\in [T,X]$. According to Definition \ref{Def-B} the function $f\in H(\D)$ and $Tf\in X$. Hence, $T$ maps $[T,X]$ into $X$. So, \eqref{Sigma} is satisfied with $[T,X]$ in place of $Z$, that is, $[T,X]$ is a $(T,X)$-admissible space.
	
	Let $Z$ be any $(T,X)$-admissible space. Given $f\in Z$ we have $f\in H(\D)$ and, via \eqref{Sigma}, necessarily $Tf\in X$. Hence, $f\in [T,X]$; see Definition \ref{Def-B}. This shows that $Z\subseteq [T,X]$.
\end{proof}

Proposition \ref{PropD} shows the importance of being able to decide when $[T,X]$ is a Banach space. The following result provides a sufficient condition.

 \begin{prop}\label{P2}Let $X$ be a Banach space of analytic functions on $\D$ and $T\in\cL(H(\D))$ be an isomorphism of $H(\D)$ satisfying $T(X)\subseteq X$. Then $([T,X],\|\cdot\|_{[T,X]})$ is a Banach space.
 	\end{prop}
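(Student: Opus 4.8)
The plan is to show that the isometry $T_{[T,X]}\colon [T,X]\to X$ furnished by Proposition \ref{P1}(iv) is in fact a \emph{surjective} linear isometry onto $X$, so that $[T,X]$ inherits completeness from the Banach space $X$. First, since $T$ is an isomorphism of $H(\D)$ it is in particular injective when acting on $H(\D)$, so Proposition \ref{P1}(ii) already guarantees that $\|\cdot\|_{[T,X]}$ is a genuine norm. Thus only completeness of $([T,X],\|\cdot\|_{[T,X]})$ remains to be established.

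Next I would exploit that, being an isomorphism, $T$ possesses a continuous linear inverse $T^{-1}\in\cL(H(\D))$, and that $[T,X]=T^{-1}(X)$ as linear spaces. The operator $T_{[T,X]}$ maps $[T,X]$ into $X$ by the very definition of the optimal domain, and it is onto $X$: given $g\in X\subseteq H(\D)$, the function $f:=T^{-1}g\in H(\D)$ satisfies $Tf=g\in X$, hence $f\in [T,X]$ and $T_{[T,X]}f=g$. Combined with the identity $\|T_{[T,X]}f\|_X=\|Tf\|_X=\|f\|_{[T,X]}$ for all $f\in[T,X]$, this exhibits $T_{[T,X]}$ as a bijective linear isometry from $[T,X]$ onto $X$.

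Finally, completeness transfers along a surjective isometry, which I would verify directly. Take a Cauchy sequence $(f_n)_{n\in\N}$ in $[T,X]$. Since $\|Tf_n-Tf_m\|_X=\|f_n-f_m\|_{[T,X]}$, the sequence $(Tf_n)_{n\in\N}$ is Cauchy in $X$ and therefore converges to some $g\in X$ by completeness of $X$. Setting $f:=T^{-1}g\in H(\D)$ yields $Tf=g\in X$, so $f\in [T,X]$, and moreover $\|f_n-f\|_{[T,X]}=\|Tf_n-g\|_X\to 0$ as $n\to\infty$. Hence every Cauchy sequence in $[T,X]$ converges in $[T,X]$, so $([T,X],\|\cdot\|_{[T,X]})$ is a Banach space.

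I do not expect a serious obstacle: the whole content is the recognition that $[T,X]$ is the isometric copy of $X$ under $T^{-1}$. The single place where the \emph{full} isomorphism hypothesis is used, rather than mere injectivity of $T$, is the surjectivity of $T_{[T,X]}$ onto $X$, which relies on the continuous inverse $T^{-1}$ to produce, for each $g\in X$, a preimage $T^{-1}g$ lying in $H(\D)$ and hence in $[T,X]$.
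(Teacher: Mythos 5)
Your proof is correct and follows essentially the same route as the paper: the paper's proof of Proposition \ref{P2} is precisely your final Cauchy-sequence argument, using $T^{-1}\in\cL(H(\D))$ to produce the candidate limit $h:=T^{-1}g$, which lies in $[T,X]$ because $Th=g\in X$. Your additional framing of $T_{[T,X]}$ as a surjective linear isometry onto $X$ is exactly the content of the paper's Proposition \ref{P4}, which the paper proves separately by the same observation, so nothing in your argument departs from the paper in substance.
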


 \begin{proof} Let $(f_n)_{n\in\N}\subseteq [T,X]$ be a Cauchy sequence in $[T,X]$. Then $(Tf_n)_{n\in\N}$ is a Cauchy sequence in $X$. Since $X$ is a Banach space, there exists $g\in X$ such that $Tf_n\to g$ in $X$ as $n\to \infty$ and hence,  $Tf_n\to g$  in $H(\D)$ as $n\to \infty$. Since $T\colon H(\D)\to H(\D)$ is an isomorphism, it follows that $f_n\to T^{-1}g$ in $H(\D)$ as $n\to\infty$. Accordingly, $h:=T^{-1}g\in H(\D)$ satisfies $Th=g\in X$. So, $h\in [T,X]$. On the other hand, $\|f_n-h\|_{[T,X]}=\|Tf_n-Th\|_X=\|Tf_n-g\|_X\to 0$ as $n\to\infty$, that is, $f_n\to h$ in $[T,X]$ as $n\to\infty$.
 	\end{proof}

 In the following result we analyze when the inclusion  $[T,X]\subseteq H(\D)$ (see Proposition \ref{P1}(i)) is continuous, equivalently, when $[T,X]$ is a Banach space.

 \begin{prop}\label{P3}  Let $X$ be a Banach space of analytic functions on $\D$ and $T\in\cL(H(\D))$ be an injective operator satisfying $T(X)\subseteq X$, in which case $\|\cdot\|_{[T,X]}$ is a norm. The following three properties are equivalent.
 	\begin{itemize}
 		\item[\rm (i)] The inclusion $[T,X]\subseteq H(\D)$ is continuous.
 		\item[\rm (ii)] The evaluation functionals $\{\delta_z:\, z\in \D\}\subseteq [T,X]^*$ and,  for each compact subset $K$ of $\D$, the set $\{\delta_w\colon\ w\in K\}$ is bounded in $[T,X]^*$.
 		\item[\rm (iii)] $([T,X],\|\cdot\|_{[T,X]})$ is a Banach space.
 	\end{itemize}
 If any one of {\rm (i)--(iii)} is satisfied, then $[T,X]$ is a Banach space of analytic functions on $\D$ and $\{\delta_z:\, z\in \D\}\subseteq [T,X]^*$.
 	\end{prop}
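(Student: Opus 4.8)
The plan is to establish the two equivalences $\mathrm{(i)}\Leftrightarrow\mathrm{(ii)}$ and $\mathrm{(i)}\Leftrightarrow\mathrm{(iii)}$, which together give the asserted three-way equivalence. Throughout I use that, since $T$ is injective on $H(\D)$, Proposition \ref{P1}(ii) guarantees $\|\cdot\|_{[T,X]}$ is a genuine norm, so $([T,X],\|\cdot\|_{[T,X]})$ is at least a normed space; completeness is precisely what is at stake in (iii). Note that Lemma \ref{Lemma A} is not directly available for $\mathrm{(i)}\Leftrightarrow\mathrm{(ii)}$, since it presupposes that the space in question is already a Banach space. Instead, the single computational fact underlying everything is the identity
\[
q_r(f)=\sup_{|z|\le r}|f(z)|=\sup_{|z|\le r}|\langle f,\delta_z\rangle|,\qquad f\in[T,X],\ 0<r<1,
\]
which follows from \eqref{eq.norme-sup} together with $f(z)=\langle f,\delta_z\rangle$.

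For $\mathrm{(i)}\Leftrightarrow\mathrm{(ii)}$ I argue as follows. Continuity of the inclusion of the normed space $[T,X]$ into the Fr\'echet space $H(\D)$ is equivalent, by the definition of $\tau_c$, to the statement that for every $0<r<1$ there is a constant $C_r>0$ with $q_r(f)\le C_r\|f\|_{[T,X]}$ for all $f\in[T,X]$. By the displayed identity such a bound says exactly that $\sup_{|z|\le r}\|\delta_z\|_{[T,X]^*}\le C_r<\infty$, that is, each $\delta_z\in[T,X]^*$ and $\{\delta_w:w\in K\}$ is bounded in $[T,X]^*$ for every compact $K\subseteq\D$ (each such $K$ being contained in some disc $\{|z|\le r\}$). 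Reading these implications in both directions yields $\mathrm{(i)}\Leftrightarrow\mathrm{(ii)}$.

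For $\mathrm{(iii)}\Rightarrow\mathrm{(i)}$ I invoke the closed graph theorem. The key point, and the place where injectivity of $T$ is essential, is that the natural inclusion $\iota\colon[T,X]\to H(\D)$ has closed graph: if $f_n\to f$ in $[T,X]$ and $f_n\to g$ in $H(\D)$, then $Tf_n\to Tf$ in $X$ and hence in $H(\D)$, while continuity of $T$ on $H(\D)$ gives $Tf_n\to Tg$ in $H(\D)$; thus $Tf=Tg$, and injectivity forces $f=g$. Granting (iii), $\iota$ is then a closed linear map from a Banach space into a Fr\'echet space, so the closed graph theorem makes it continuous, which is (i). Conversely, for $\mathrm{(i)}\Rightarrow\mathrm{(iii)}$ I prove completeness directly. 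Let $(f_n)_{n\in\N}$ be Cauchy in $[T,X]$. Since $\|f_n-f_m\|_{[T,X]}=\|Tf_n-Tf_m\|_X$, the sequence $(Tf_n)_{n\in\N}$ is Cauchy in the Banach space $X$, so $Tf_n\to g$ in $X$ for some $g\in X$, and \emph{a fortiori} in $H(\D)$. By (i) the sequence $(f_n)_{n\in\N}$ is also Cauchy in $H(\D)$, hence $f_n\to f$ in the complete space $H(\D)$; continuity of $T$ on $H(\D)$ then yields $Tf_n\to Tf$, so $Tf=g\in X$ and therefore $f\in[T,X]$. Finally $\|f_n-f\|_{[T,X]}=\|Tf_n-g\|_X\to0$, proving $[T,X]$ complete.

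The closing assertion is then immediate: once (i)--(iii) hold, $[T,X]$ is a Banach space with continuous inclusion into $H(\D)$, i.e. a Banach space of analytic functions on $\D$, and $\{\delta_z:z\in\D\}\subseteq[T,X]^*$ either from (ii) or now by Lemma \ref{Lemma A}. I expect the main obstacle to be less any single deep step than the careful bookkeeping of the three topologies involved (those of $[T,X]$, of $X$, and of $H(\D)$) and the repeated use of injectivity of $T$ to transfer limits back from the $T$-image; the closed graph verification in $\mathrm{(iii)}\Rightarrow\mathrm{(i)}$ is where this is most delicate.
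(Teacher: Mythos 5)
Your proposal is correct and follows essentially the same route as the paper: the equivalence $\mathrm{(i)}\Leftrightarrow\mathrm{(ii)}$ via the identity $q_r(f)=\sup_{|z|\le r}|\langle f,\delta_z\rangle|$ (the paper writes this as two separate implications, but with the same estimate), the direct Cauchy-sequence argument for $\mathrm{(i)}\Rightarrow\mathrm{(iii)}$, and the closed-graph argument for $\mathrm{(iii)}\Rightarrow\mathrm{(i)}$, where injectivity of $T$ is used exactly as in the paper to conclude $f=g$ from $Tf=Tg$. The concluding assertion is also handled identically, so there is nothing to add.
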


 \begin{proof} (i)$\Rightarrow$(ii) Let $z\in \D$. The evaluation functional $\delta_z\colon H(\D)\to\C$ is known to be continuous. By (i) it follows that  $\delta_z\colon [T,X]\to\C$ is also continuous, that is, $\delta_z\in [T,X]^*$.
 	
 	For a fixed compact subset $K$ of $\D$, by the assumption of (i) and the definition of the topology $\tau_c$ in $H(\D)$ there exists $C>0$ such that
 	\[
 	\sup_{z\in K}|f(z)|\leq C\|f\|_{[T,X]},\quad f\in [T,X].
 	\]
 	Fix $w\in K$. Given $f\in [T,X]$ satisfying $\|f\|_{[T,X]}\leq 1$, it follows from the previous inequality  that
 	\[
 	|\delta_w(f)|=|f(w)|\leq C.
 	\]
 	This implies that $\delta_w\in [T,X]^*$ and   $\|\delta_w\|_{[T,X]^*}\leq C$. So,   $\{\delta_w\colon\ w\in K\}$ is a bounded subset of $[T,X]^*$.
 	
 	(ii)$\Rightarrow$(i) Let $K$ be a compact subset  of $\D$. Since    $\{\delta_z\colon\ z\in K\}$ is a bounded subset of $[T,X]^*$, there exists $C>0$ such that $\|\delta_w\|_{[T,X]^*}\leq C$ for every $w\in K$. It follows, for every $w\in K$, that
 	\[
 	\left|\frac{f(w)}{\|f\|_{[T,X]}}\right|=\left|\delta_w\left(\frac{f}{\|f\|_{[T,X]}}\right)\right|\leq C,  \quad f\in [T,X]\setminus\{0\},
 	\]
 	and hence, that
 	\[
 	|f(w)|\leq C \|f\|_{[T,X]}.
 	\]
 	Accordingly, $\sup_{w\in K}|f(w)|\leq C\|f\|_{[T,X]}$ for every $f\in [T,X]$. Since $K$ is an arbitrary compact subset of $\D$, this implies that the inclusion $[T,X]\subseteq H(\D)$ is continuous.
 	
 	(i)$\Rightarrow$(iii) Let $(f_n)_{n\in\N}\subseteq [T,X]$ be a Cauchy sequence in $[T,X]$, that is, $\|f_n-f_m\|_{[T,x]}=\|Tf_n-Tf_m\|_X\to 0$ as $n,m\to\infty$.  Accordingly, $(Tf_n)_{n\in\N}\subseteq X$ is a Cauchy sequence in $X$ and hence, there exists $g\in X$ such that $Tf_n\to g$ in $X$ as $n\to\infty$. Then also $Tf_n\to g$ in $H(\D)$ as $n\to\infty$. On the other hand, by the hypothesis of (i) the sequence  $(f_n)_{n\in\N}\subseteq H(\D)$ is also a Cauchy sequence in $H(\D)$. Therefore, there exists $f\in H(\D)$ such that $f_n\to f$ in $H(\D)$ as $n\to\infty$. Since $T\in \cL(H(\D))$, it follows that $Tf_n\to Tf$ in $H(\D)$ as $n\to\infty$. So, $Tf=g$ in $H(\D)$. But,   $g\in X$ and so $Tf\in X$,  that is, $f\in [T,X]$. Moreover, $\|f_n-f\|_{[T,X]}=\|Tf_n-Tf\|_X=\|Tf_n-g\|_X\to 0$ as $n\to\infty$, that is, $f_n\to f$ in $[T,X]$. Accordingly, $[T,X]$ is a Banach space relative to $\|\cdot\|_{[T,X]}$.
 	
 		(iii)$\Rightarrow$(i)  We first prove that the natural inclusion $J\colon [T,X]\to H(\D)$, defined by $f\mapsto Jf:=f$, has a closed graph. To see this, let $(f_n)_{n\in\N}\subseteq [T,X]$ satisfy $f_n\to f$ in $[T,X]$ and $Jf_n=f_n\to g$ in $H(\D)$ for some $g\in H(\D)$ as $n\to\infty$. The fact that  $(f_n)_{n\in\N}\subseteq [T,X]$ converges to $f$ in $[T,X]$ implies that $Tf_n\to Tf$ in $X$ as $n\to\infty$, and hence, that $Tf_n\to Tf$  in $H(\D)$ as $n\to\infty$. On the other hand, $T\in \cL(H(\D))$ implies that  $Tf_n\to Tg$ in $H(\D)$ as $n\to\infty$. Therefore, $Tf=Tg$. Since $T$ is injective, it follows that $f=g$. So, $J$ has a closed graph.
 		
 		By property (iii)  the space $[T,X]$ is a Banach space. Moreover, $H(\D)$ is a Fr\'echet space. So, the continuity of the inclusion $J\colon [T,X]\to H(\D)$ follows by the closed graph theorem for Fr\'echet spaces.
 		
 		Finally, if any one of (i)--(iii) is satisfied, then $[T,X]$ is a Banach space with $[T,X]\subseteq H(\D)$ and the inclusion is continuous. Hence, $[T,X]$ is a Banach space of analytic functions on $\D$.
 	\end{proof}

 We now  establish further properties of  the space $([T,X],\|\cdot\|_{[T,X]})$ and of the inclusion $X\subseteq [T,X]$ under suitable assumptions.

 \begin{prop}\label{P4}Let $X$ be a Banach space of analytic functions on $\D$ and $T\in\cL(H(\D))$ be an isomorphism  satisfying $T(X)\subseteq X$. Then $([T,X],\|\cdot\|_{[T,X]})$ is isometrically isomorphic to $X$.
 \end{prop}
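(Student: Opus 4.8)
The plan is to verify that the extension map $T_{[T,X]}\colon [T,X]\to X$, $f\mapsto Tf$, already produced in Proposition \ref{P1}(iv), is itself a surjective linear isometry, and hence the required isometric isomorphism. Since $T$ is an isomorphism of $H(\D)$ (in particular injective) and $T(X)\subseteq X$, Proposition \ref{P2} guarantees that $([T,X],\|\cdot\|_{[T,X]})$ is a Banach space, so both spaces involved are genuinely Banach and it suffices to exhibit a linear bijection preserving norms.

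First I would record that $T_{[T,X]}$ is an isometry. This is immediate from the very definition of the norm on $[T,X]$: for every $f\in[T,X]$ one has $\|T_{[T,X]}f\|_X=\|Tf\|_X=\|f\|_{[T,X]}$. In particular $T_{[T,X]}$ is injective, which also re-confirms, via Proposition \ref{P1}(ii), that $\|\cdot\|_{[T,X]}$ is a genuine norm. Linearity of $T_{[T,X]}$ is clear.

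The only substantive point is surjectivity onto $X$, and here I would use that $T$ is invertible on all of $H(\D)$. Given $g\in X$, regard $g$ as an element of $H(\D)$ and put $f:=T^{-1}g\in H(\D)$, which is well defined because $T\colon H(\D)\to H(\D)$ is an isomorphism. Then $Tf=g\in X$, so by Definition \ref{Def-B} we have $f\in[T,X]$, and $T_{[T,X]}f=Tf=g$. Thus every $g\in X$ lies in the range, and $T_{[T,X]}$ is onto.

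Combining these observations, $T_{[T,X]}\colon [T,X]\to X$ is a bijective linear isometry, that is, an isometric isomorphism, which is exactly the assertion. I do not anticipate a real obstacle: the one place where the full isomorphism hypothesis on $T$ (rather than mere injectivity) is essential is the surjectivity step, where the preimage $T^{-1}g$ must exist in $H(\D)$ and automatically lands in $[T,X]$; completeness of $[T,X]$ is the other ingredient, and it is supplied directly by Proposition \ref{P2}.
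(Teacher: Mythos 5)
Your proposal is correct and follows essentially the same argument as the paper: both establish that the map $f\mapsto Tf$ from $[T,X]$ to $X$ is an isometry by definition of $\|\cdot\|_{[T,X]}$, and obtain surjectivity by noting that for $g\in X$ the preimage $T^{-1}g$ exists in $H(\D)$ and lies in $[T,X]$ since $T(T^{-1}g)=g\in X$. Your extra appeal to Proposition \ref{P2} for completeness is harmless but not needed, since a surjective isometry onto the Banach space $X$ already forces $[T,X]$ to be complete.
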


\begin{proof} The injectivity of $T\in\cL(H(\D))$ clearly implies that the operator $T\colon [T,X]\to X$, given by  $f\mapsto Tf$, is injective. Moreover, the fact that $T\in\cL(H(\D))$ is surjective  implies that the operator $T\colon [T,X]\to X$, is surjective. Indeed, given $g\in X\subseteq H(\D)$ the function  $T^{-1}g\in H(\D)$ exists and satisfies  $T(T^{-1}g)=g$. Accordingly, $T^{-1}g\in [T,X]$ and $T(T^{-1}g)=g$. On the other hand, $\|Tf\|_X=\|f\|_{[T,X]}$ for every $f\in [T,X]$. So, $T\colon [T,X]\to X$ is a surjective isometry. In particular, $([T,X],\|\cdot\|_{[T,X]})$ is isometrically isomorphic to $X$.
	\end{proof}

\begin{prop}\label{P5} Let $X$ be a Banach space of analytic functions on $\D$ and $T\in\cL(H(\D))$ be an injective operator such that  $T(X)\subseteq X$ and  $([T,X],\|\cdot\|_{[T,X]})$ is a Banach space. The following properties are equivalent.
	\begin{itemize}
		\item[\rm (i)] The continuous linear operator $T\colon X\to X$ has closed range in $X$.
		\item[\rm (ii)] The natural inclusion $J_{[T,X]}\colon X\to [T,X]$, defined by $f\mapsto J_{[T,X]}f:=f$,  has closed range in $[T,X]$.
	\end{itemize}
\end{prop}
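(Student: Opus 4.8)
The plan is to transfer the closed-range question from one operator to the other through the isometry furnished by Proposition \ref{P1}(iv). Recall from its proof that the map $T_{[T,X]}\colon [T,X]\to X$, $f\mapsto Tf$, satisfies $\|T_{[T,X]}f\|_X=\|f\|_{[T,X]}$ for all $f\in[T,X]$; that is, $T_{[T,X]}$ is an isometry. The basic observation linking the two operators is the factorisation
\[
T_{[T,X]}\circ J_{[T,X]}=T\quad\text{on }X,
\]
since for $f\in X$ we have $J_{[T,X]}f=f$ (now regarded as an element of $[T,X]$) and then $T_{[T,X]}f=Tf$. Consequently the range of $T\colon X\to X$ is exactly the image under $T_{[T,X]}$ of the range $J_{[T,X]}(X)=X\subseteq[T,X]$ of the inclusion.

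Next I would record the structural facts about $T_{[T,X]}$. Being an isometry defined on the Banach space $[T,X]$ (here the standing hypothesis that $[T,X]$ is a Banach space enters), it maps $[T,X]$ onto the subspace $R:=T_{[T,X]}([T,X])=\{Tf:f\in[T,X]\}$ of $X$, and the completeness of $[T,X]$ together with the isometry property forces $R$ to be complete, hence a closed subspace of $X$. Thus $T_{[T,X]}\colon[T,X]\to R$ is an isometric isomorphism onto the closed subspace $R$ of $X$.

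With these preparations the equivalence becomes a routine transport of closedness. Since $T_{[T,X]}\colon[T,X]\to R$ is a homeomorphism, a subspace of $[T,X]$ is closed precisely when its image in $R$ is closed; applying this to $J_{[T,X]}(X)$ and invoking the factorisation gives that $J_{[T,X]}(X)$ is closed in $[T,X]$ if and only if $T(X)$ is closed in $R$. Finally, because $R$ is itself closed in $X$ and $T(X)\subseteq R$, closedness of $T(X)$ in $R$ is equivalent to closedness of $T(X)$ in $X$. Chaining these equivalences yields (i)$\Leftrightarrow$(ii).

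The step I expect to require the most care is the middle one: verifying that the range $R$ of $T_{[T,X]}$ is a genuinely closed subspace of $X$, so that the subspace and ambient notions of closedness coincide. This rests on the isometry identity $\|Tf\|_X=\|f\|_{[T,X]}$ and on $[T,X]$ being complete; without the assumption that $[T,X]$ is a Banach space the argument breaks down, so it is worth flagging explicitly where completeness is used. Everything else is a formal consequence of the factorisation $T_{[T,X]}\circ J_{[T,X]}=T$ and of the fact that isometric isomorphisms both preserve and reflect closedness of subspaces.
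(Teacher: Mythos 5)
Your proof is correct, but it takes a genuinely different route from the one in the paper. The paper treats the two implications separately and rests on the open mapping theorem: for (i)$\Rightarrow$(ii) it applies that theorem to the bijection $T\colon X\to T(X)$ (using injectivity and the assumed closedness of $T(X)$) to obtain $c>0$ with $\|Tf\|_X\geq c\|f\|_X$, and then reads off from $\|J_{[T,X]}f\|_{[T,X]}=\|Tf\|_X$ that the inclusion is bounded below, hence has closed range; for (ii)$\Rightarrow$(i) it notes that $(X,\|\cdot\|_{[T,X]})$ is complete (being closed in the Banach space $[T,X]$), applies the open mapping theorem to the identity map $(X,\|\cdot\|_X)\to (X,\|\cdot\|_{[T,X]})$, and again deduces the lower bound $\|Tf\|_X\geq c\|f\|_X$, which yields closed range of $T$. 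You instead exploit the isometry $T_{[T,X]}$ of Proposition \ref{P1}(iv) globally: completeness of $[T,X]$ makes its isometric image $R:=T_{[T,X]}([T,X])$ a closed subspace of $X$, and the factorisation $T=T_{[T,X]}\circ J_{[T,X]}$ transports closedness of $J_{[T,X]}(X)$ in $[T,X]$ to closedness of $T(X)$ in $R$, hence in $X$, and back. This dispenses with the open mapping theorem entirely, proves both implications in one stroke, and makes the structural picture transparent: under the isometric identification of $[T,X]$ with the closed subspace $R\subseteq X$ (a relative of Proposition \ref{P4}, where $T$ is moreover surjective on $H(\D)$ and $R$ is all of $X$), the inclusion $J_{[T,X]}$ \emph{becomes} the operator $T$ itself, so the two closed-range statements coincide. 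What the paper's route buys in exchange is the quantitative by-product $\|Tf\|_X\geq c\|f\|_X$, i.e.\ the explicit ``bounded below'' formulation of closed range. Both arguments use exactly the stated hypotheses: injectivity of $T$ (so that $\|\cdot\|_{[T,X]}$ is a norm and $T_{[T,X]}$ is injective) and completeness of $[T,X]$, which you correctly flag as the step where the Banach space assumption is indispensable.
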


\begin{proof} (i)$\Rightarrow$(ii)
	Since $T(X)\subseteq X$ is a closed subspace of $X$, necessarily $(T(X),\|\cdot\|_X)$ is a Banach space. So, an application of  the open mapping theorem implies that the continuous, bijective operator $T\colon X\to T(X)$ is an isomorphism. Accordingly,   there exists $c>0$ such that
	\begin{equation}\label{eq.closed}
		\|Tf\|_X\geq c\|f\|_X, \quad f\in X.
		\end{equation}
	This implies that the inclusion $J_{[T,X]}\colon X\to [T,X]$ has closed range. Indeed, from \eqref{eq.closed} it follows that
	\[
	\|J_{[T,X]}f\|_{[T,X]}=\|f\|_{[T,X]}=\|Tf\|_X\geq c\|f\|_X, \quad f\in X,
	\]
	with $J_{[T,X]}$ continuous (cf. Proposition \ref{P1}(iii)), which yields that the inclusion $J_{[T,X]}\colon X\to [T,X]$ has closed range.
	
	(ii)$\Rightarrow$(i) Since the inclusion $J_{[T,X]}\colon X\to [T,X]$ has closed range, the linear subspace $J_{[T,X]}(X)=X$ is  closed in the Banach space $[T,X]$. So, $(X, \|\cdot\|_{[T,X]})$ is a Banach space. In view of the continuity of $J_{[T,X]}\colon (X,\|\cdot\|_X)\to ([T,X], \|\cdot\|_{[T,X]})$, we oberve that the identity operator $L\colon (X,\|\cdot\|_X)\to (X,\|\cdot\|_{[T,X]})$ is continuous and bijective. Therefore, we can apply the open mapping theorem to conclude that  $L\colon (X,\|\cdot\|_X)\to (X,\|\cdot\|_{[T,X]})$ is an isomorphism. Accordingly, there exists $c>0$ such that $\|Lf\|_{[T,X]}\geq c\|f\|_X$ for every $f\in X$, and hence,
	\[
	\|Tf\|_X=\|f\|_{[T,X]}=\|Lf\|_{[T,X]}\geq c\|f\|_X,\quad f\in X.
	\]
	This implies that the operator $T\colon X\to X$ has closed range.
	\end{proof}

An immediate consequence of Proposition \ref{P5} is the following fact.

\begin{corollary}\label{C1}
Let $X$ be a Banach space of analytic functions on $\D$ and $T\in\cL(H(\D))$ be an injective operator such that  $T(X)\subseteq X$ and $([T,X],\|\cdot\|_{ [T,X]})$ is a Banach space.
\begin{itemize}
		\item[\rm (i)] Suppose that the natural inclusion $J_{[T,X]}\colon X\to [T,X]$ is surjective. Then  the operator $T\colon X\to X$ has closed range in $X$.
	\item[\rm (ii)] Suppose that the operator $T\colon X\to X$ fails to have closed range in $X$. Then $X$ is a proper subspace of $[T,X]$.
\end{itemize}
\end{corollary}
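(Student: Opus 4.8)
The plan is to read off both assertions directly from the equivalence established in Proposition \ref{P5}, using only the elementary observation that the natural inclusion $J_{[T,X]}\colon X\to [T,X]$ sends each $f$ to itself, so that its range is precisely the set $X$ regarded as a linear subspace of $[T,X]$.

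For part (i), I would first note that surjectivity of $J_{[T,X]}$ means exactly that $X=[T,X]$ as sets. In that case the range $J_{[T,X]}(X)=[T,X]$ is the whole Banach space $[T,X]$, which is therefore trivially closed, so $J_{[T,X]}$ has closed range. The implication (ii)$\Rightarrow$(i) of Proposition \ref{P5} then yields that $T\colon X\to X$ has closed range, as claimed.

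For part (ii), I would argue by contraposition. By Proposition \ref{P1}(iii) one always has $X\subseteq [T,X]$, so $X$ fails to be a \emph{proper} subspace of $[T,X]$ precisely when $X=[T,X]$, that is, precisely when $J_{[T,X]}$ is surjective. Should this occur, part (i) forces $T\colon X\to X$ to have closed range. Taking the contrapositive gives: if $T$ fails to have closed range, then $X$ is a proper subspace of $[T,X]$, which is exactly the assertion of (ii).

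I do not expect any genuine obstacle here, since the entire substantive content already resides in Proposition \ref{P5}; the only points requiring care are the identification of surjectivity of the inclusion with the set equality $X=[T,X]$ and the triviality that the whole space is closed. In particular, no fresh appeal to the open mapping theorem or to any property of $T$ beyond injectivity and $T(X)\subseteq X$ is needed at this stage, as these ingredients were already absorbed into the proof of Proposition \ref{P5}.
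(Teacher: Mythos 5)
Your proof is correct, and it follows the same overall strategy as the paper: both parts are reduced to the equivalence of Proposition \ref{P5} through the observation that surjectivity of the inclusion $J_{[T,X]}$ is the same as the set equality $X=[T,X]$. The one genuine difference is that you dispense with the open mapping theorem. In both parts the paper deduces from surjectivity that $J_{[T,X]}$ is a continuous bijection of the Banach space $X$ onto the Banach space $[T,X]$, invokes the open mapping theorem to conclude that it is an isomorphism, and from this concludes that its range is closed. As you point out, this detour is superfluous: if $J_{[T,X]}$ is onto, then its range equals all of $[T,X]$, which is closed in itself for trivial reasons, so the implication (ii)$\Rightarrow$(i) of Proposition \ref{P5} applies at once. (The hypothesis that $[T,X]$ is a Banach space is of course still needed, but only because Proposition \ref{P5} requires it; you are right that no further property of $T$ enters.) A second, purely cosmetic difference is that your part (ii) reuses part (i) by contraposition, whereas the paper repeats the same argument in the form of a proof by contradiction. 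Both arguments are valid; yours is slightly leaner.
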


\begin{proof} 
(i)
The inclusion $J_{[T,X]}\colon X\to [T,X]$ is a continuous bijection of $X$ onto its range in $[T,X]$ and hence, it is an isomorphism by the open mapping theorem. Accordingly, $J_{[T,X]}$ has a closed range in $[T,X]$. Then Proposition \ref{P5} implies that $T\colon X\to X$ has a closed range in $X$.

(ii) Suppose that $X=[T,X]$. Then the inclusion $J_{[T,X]}\colon X\to [T,X]$ is surjective. Since $([T,X],\|\cdot\|_{ [T,X]})$ is a Banach space,  the open mapping theorem again implies that  $J_{[T,X]}\colon X\to [T,X]$ is an isomorphism and hence, it has a closed range in $[T,X]$. Therefore, by Proposition  \ref{P5}, also the operator $T\colon X\to X$ has  closed range in $X$; a contradiction. So, $X\subsetneqq [T,X]$. \end{proof}

\section{Optimal domain of Volterra operators on Korenblum growth Banach spaces}

Let $g\in H(\D)$ be a non-constant function. The \textit{Volterra operator} $V_g\colon H(\D)\to H(\D)$ is the linear operator defined by
\begin{equation}\label{eq.V}
	(V_gf)(z):=\int_0^z f(\xi)g'(\xi)\,d\xi,\quad f\in H(\D), \ z\in\D.
	\end{equation}
The operator $V_g$ acts continuously in $H(\D)$. Moreover, $V_g$ is \textit{injective} on $H(\D)$, \cite{AND}, but, not surjective because $(V_gf)(0)=0$ for every $f\in H(\D)$.

In the definition of the operators $V_g$ it can be assumed, if necessary, that $g(0)=0$. Indeed, the functions $g$ and $G(z):=g(z)-g(0)$, for $z\in\D$, define the same Volterra operator in $H(\D)$, that is, $V_G=V_g$, because $G'(z)=g'(z)$ for all $z\in\D$. For $g(z):=z$ the operator $V_g$ reduces to the classical (Volterra) integral operator.

 The Volterra operators $V_g$ have been investigated on different spaces of analytic functions by many
authors. We refer to \cite{Cont-a,Si}, for example,  and the references therein.

 Let us briefly recall the definition of the relevant spaces involved.
 For each $\gamma>0$ the \textit{Korenblum growth Banach spaces} are defined by
 \[
 A^{-\gamma}:=\{f\in H(\D):\ \|f\|_{-\gamma}:=\sup_{z\in\D}(1-|z|)^\gamma |f(z)|<\infty\}
 \]
and its (proper) closed subspace by
\[
 A^{-\gamma}_0:=\{f\in H(\D):\ \lim_{|z|\to 1^-}(1-|z|)^\gamma |f(z)|=0\}.
\]
Both are Banach  spaces  when endowed with the norm
\begin{equation}\label{eq.supnorm}
\|f\|_{-\gamma}:=\sup_{z\in\D}(1-|z|)^\gamma |f(z)|.
\end{equation}
The space  $A_0^{-\gamma}$ coincides with the closure of the polynomials in $A^{-\gamma}$, \cite[Lemma 3]{Sh-Wi}, and point evaluations on $\D$ belong to both $(A_0^{-\gamma})^*$ and $(A^{-\gamma})^*$, \cite[Lemma 1]{Sh-Wi}. Hence, via Lemma \ref{Lemma A}, both $A_0^{-\gamma}$ and $A^{-\gamma}$ are Banach spaces of analytic functions on $\D$, for every $\gamma>0$.
Moreover, the bidual $(A_0^{-\gamma})^{**}=A^{-\gamma}$ for all $\gamma>0$, \cite{Ru-Sh},
\cite[Theorem 2]{Sh-Wi}. Whenever $0<\gamma<\beta$, the inclusion $A^{-\gamma}\subseteq A^{-\beta}$ is \textit{proper}. This follows from the fact (routine to verify) that $A^{-\gamma}\subseteq A^{-\beta}_0$ and that $A^{-\beta}_0$ is a proper subspace of $A^{-\beta}$.

 Related to $A^{-\gamma}$ and $A^{-\gamma}_0$ are the \textit{Bloch spaces}. A function $f\in H(\D)$  belongs to the \textit{Bloch space} $\cB$ whenever
  $f'\in  A^{-1}$, that is, $\sup_{z\in\D}(1-|z|)|f'(z)|<\infty$. The Bloch space $\cB$ is a  Banach space of analytic  functions on $\D$ when endowed
 with the norm
 \begin{equation}\label{eq.normbloch}
 	\|f\|_{\cB}:=|f(0)|+\sup_{z\in\D}(1-|z|)|f'(z)|.
 	\end{equation}
 The inequalities
 \[
 (1-|z|)\leq (1-|z|^2)=(1-|z|)(1+|z|)\leq 2 (1-|z|),\quad z\in\D,
 \]
 show that the norm \eqref{eq.normbloch} is equivalent to the norm
 \[
 f\mapsto |f(0)|+\sup_{z\in\D}(1-|z|^2)|f'(z)|
 \]
 in $\cB$, which is also commonly used.
 A function $f\in H(\D)$ belongs to the \textit{little Bloch space} $\cB_0$ if $f\in \cB$ and
 \begin{equation}\label{eq.lim}
 	\lim_{|z|\to 1^-}(1-|z|)|f'(z)|=0.
 	\end{equation}
 The little Bloch space $\cB_0$  is a closed subspace of the Bloch space $\cB$ and  hence, $\cB_0$  is a  Banach space when it is endowed with the norm defined in  \eqref{eq.normbloch}. Moreover, the bidual $\cB_0^{**}=\cB$. For properties of Bloch spaces, see \cite{ACP}, \cite{Z}, for example.

 The following continuity and   compactness results for the operators $V_g$
 on both $A^{-\gamma}$ and $A_0^{-\gamma}$ are known; see \cite[Theorems 1 and 2]{BCHMP} or \cite[Proposition 3.1]{Ma}.

 \begin{prop}\label{P.Con} Let $g\in H(\D)$. For each $\gamma>0$  the  following three properties are equivalent.
 	\begin{itemize}
 		\item[\rm (i)] The operator  $V_g\colon A^{-\gamma}\to A^{-\gamma}$ is continuous.
 			\item[\rm (ii)] The operator $V_g\colon A^{-\gamma}_0\to A^{-\gamma}_0$ is continuous.
 				\item[\rm (iii)] The function $g\in \cB$.
 	\end{itemize}
 	\end{prop}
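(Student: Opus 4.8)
The plan is to prove the three equivalences through the single structural identity $(V_gf)'(z)=f(z)g'(z)$, $z\in\D$, combined with the standard two-sided comparison between the growth of an analytic function and the growth of its derivative. Concretely, I would first record two elementary auxiliary facts. (a) A Cauchy-estimate bound: if $h\in A^{-\gamma}$, then $\sup_{z\in\D}(1-|z|)^{\gamma+1}|h'(z)|\le 2^{\gamma+1}\|h\|_{-\gamma}$, obtained by applying Cauchy's integral formula on the disc of radius $(1-|z|)/2$ centred at $z$ and estimating $|h|$ there by $\|h\|_{-\gamma}$. (b) An integration bound: if $\phi\in H(\D)$ satisfies $M:=\sup_{z\in\D}(1-|z|)^{\gamma+1}|\phi(z)|<\infty$ and $F(z):=\int_0^z\phi(\xi)\,d\xi$, then integrating $|F|$ along the radius to $z$ and using $\int_0^{|z|}(1-s)^{-(\gamma+1)}\,ds\le \frac1\gamma(1-|z|)^{-\gamma}$ gives $\|F\|_{-\gamma}\le \frac{M}{\gamma}+|F(0)|$; the same radial argument yields a little-$o$ version, namely that $(1-|z|)^{\gamma+1}|\phi(z)|\to0$ as $|z|\to1^-$ forces $F\in A_0^{-\gamma}$.

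Granting these, the implication (iii)$\Rightarrow$(i) is immediate: if $g\in\cB$ then $(1-|z|)|g'(z)|\le\|g\|_{\cB}$, so for $f\in A^{-\gamma}$ the factorization gives $(1-|z|)^{\gamma+1}|(V_gf)'(z)|=\bigl((1-|z|)^\gamma|f(z)|\bigr)\bigl((1-|z|)|g'(z)|\bigr)\le\|g\|_{\cB}\|f\|_{-\gamma}$, and since $(V_gf)(0)=0$, fact (b) yields $\|V_gf\|_{-\gamma}\le\frac1\gamma\|g\|_{\cB}\|f\|_{-\gamma}$, i.e.\ continuity on $A^{-\gamma}$. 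For (iii)$\Rightarrow$(ii) I would observe that when $f\in A_0^{-\gamma}$ the same product tends to $0$ as $|z|\to1^-$ (its first factor does, the second is bounded), so the little-$o$ version of (b) shows $V_gf\in A_0^{-\gamma}$; the continuity of $V_g$ on the closed subspace $A_0^{-\gamma}$ is then inherited from its continuity on $A^{-\gamma}$.

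The reverse implications (i)$\Rightarrow$(iii) and (ii)$\Rightarrow$(iii) rest on the single test family $f_a(z):=(1-\bar a z)^{-\gamma}$, $a\in\D$. I would check that each $f_a$ is bounded analytic (hence lies in $A_0^{-\gamma}\subseteq A^{-\gamma}$), with $\|f_a\|_{-\gamma}\le1$ (using $|1-\bar a z|\ge1-|z|$) while $(1-|a|)^\gamma|f_a(a)|=(1+|a|)^{-\gamma}\ge2^{-\gamma}$. If $V_g$ is continuous on $A^{-\gamma}$ (resp.\ $A_0^{-\gamma}$) with norm $M$, then $\|V_gf_a\|_{-\gamma}\le M$; applying fact (a) to $h=V_gf_a$ at the point $a$ and using $(V_gf_a)'(a)=f_a(a)g'(a)$ with $|f_a(a)|=(1-|a|^2)^{-\gamma}$ gives, after the elementary simplification $(1-|a|)^{\gamma+1}(1-|a|^2)^{-\gamma}=(1-|a|)(1+|a|)^{-\gamma}\ge2^{-\gamma}(1-|a|)$, the bound $(1-|a|)|g'(a)|\le2^{2\gamma+1}M$. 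Taking the supremum over $a\in\D$ shows $g'\in A^{-1}$, i.e.\ $g\in\cB$; since the $f_a$ already lie in $A_0^{-\gamma}$, the identical computation settles (ii)$\Rightarrow$(iii), and the cycle closes.

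I do not expect a single deep obstacle here; the crux is packaging the growth--derivative correspondence correctly so that the Bloch condition $(1-|z|)|g'(z)|$ bounded appears as the exact factor controlling $V_g$. The most delicate technical point is the little-$o$ integration lemma underlying (iii)$\Rightarrow$(ii): one must split the radial integral at a radius $\rho$ beyond which $(1-|z|)^{\gamma+1}|\phi|<\ve$ and verify that the inner portion contributes only a term of order $(1-|z|)^\gamma\to0$, leaving $\ve/\gamma$ in the limit. The other place demanding care is the verification of the uniform $A^{-\gamma}$-bound for the test functions $f_a$, but this is routine.
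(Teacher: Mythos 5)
Your proof is correct. One structural remark before the detailed comparison: the paper does not prove Proposition \ref{P.Con} at all --- it quotes the result as known, citing \cite[Theorems 1 and 2]{BCHMP} and \cite[Proposition 3.1]{Ma} --- so your argument supplies a self-contained proof where the paper offers only a reference. Your three ingredients are exactly the standard tools behind those cited results, and they also align with machinery the paper uses elsewhere: your fact (a) is the boundedness of $D\colon A^{-\gamma}\to A^{-(\gamma+1)}$ and your fact (b) is the boundedness of $J\colon A^{-(\gamma+1)}\to A^{-\gamma}$, both of which appear (with references to \cite{Du}, \cite{HL}, \cite{AT}) at the start of Section 4, while your test family $f_a(z)=(1-\bar a z)^{-\gamma}$ plays the same role as the functions $g_u$ with $g_u'(z)=(1-\tfrac{\bar u}{|u|}z)^{-1}$ used in the proof of Proposition \ref{P.Intr}. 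The individual estimates all check out: the Cauchy estimate on the disc of radius $(1-|z|)/2$ gives $(1-|z|)^{\gamma+1}|h'(z)|\le 2^{\gamma+1}\|h\|_{-\gamma}$; radial integration gives $\|V_gf\|_{-\gamma}\le\tfrac{1}{\gamma}\|g\|_{\cB}\|f\|_{-\gamma}$ (using $(V_gf)(0)=0$), and the splitting at a radius $\rho$ correctly yields the little-$o$ version needed for $A^{-\gamma}_0$; finally $f_a\in H^\infty\subseteq A_0^{-\gamma}$ with $\|f_a\|_{-\gamma}\le 1$, and the evaluation of $(V_gf_a)'(a)=(1-|a|^2)^{-\gamma}g'(a)$ combined with fact (a) gives $(1-|a|)|g'(a)|\le 2^{2\gamma+1}M$, so $g'\in A^{-1}$. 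Placing the test functions in $A_0^{-\gamma}$ so that a single computation closes both reverse implications is the right economy; the proof is complete as written.
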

 	
 	\begin{prop}\label{P.Comp} Let $g\in H(\D)$. For each $\gamma>0$  the following three properties are equivalent.
 		\begin{itemize}
 			\item[\rm (i)] The operator $V_g\colon A^{-\gamma}\to A^{-\gamma}$ is compact.
 			\item[\rm (ii)] The operator $V_g\colon A^{-\gamma}_0\to A^{-\gamma}_0$ is compact.
 			\item[\rm (iii)] The function $g\in \cB_0$.
 		\end{itemize}
 	\end{prop}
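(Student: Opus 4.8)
The plan is to reduce the sup-norm of $V_gf$ to a pointwise growth condition on the product $f\,g'$, after which both compactness and its failure become transparent. The key observation is that $(V_gf)(0)=0$ and $(V_gf)'=fg'$. On the one hand, the Cauchy integral estimate on the disc of radius $(1-|z|)/2$ centred at $z$ gives $\sup_{z}(1-|z|)^{\gamma+1}|h'(z)|\le 2^{\gamma+1}\|h\|_{-\gamma}$ for every $h\in A^{-\gamma}$; on the other, integrating $h'$ along the segment $[0,z]$ for $h$ with $h(0)=0$ yields the reverse bound $\|h\|_{-\gamma}\le \tfrac1\gamma\sup_{z}(1-|z|)^{\gamma+1}|h'(z)|$. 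Applied to $h=V_gf$ this produces the two-sided estimate
\[
\frac{1}{2^{\gamma+1}}\,\Phi_g(f)\le \|V_gf\|_{-\gamma}\le \frac1\gamma\,\Phi_g(f),\qquad \Phi_g(f):=\sup_{z\in\D}\big[(1-|z|)^\gamma|f(z)|\big]\big[(1-|z|)|g'(z)|\big],
\]
which already recovers the continuity statement of Proposition \ref{P.Con} and is the backbone of the compactness argument.

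I would combine this with the standard \emph{normal family criterion}: since $|f(z)|\le\|f\|_{-\gamma}(1-|z|)^{-\gamma}$, bounded subsets of $A^{-\gamma}$ are locally bounded, hence relatively compact in $H(\D)$ by Montel's theorem; as $V_g\in\cL(H(\D))$, it follows that $V_g$ is compact on $A^{-\gamma}$ (resp. $A^{-\gamma}_0$) if and only if $\|V_gf_n\|_{-\gamma}\to0$ for every bounded sequence $(f_n)$ with $f_n\to0$ in $H(\D)$. To prove (iii)$\Rightarrow$(i), assume $g\in\cB_0$, so that $(1-|z|)|g'(z)|\to0$ as $|z|\to1^-$. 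Given such $(f_n)$ with $\|f_n\|_{-\gamma}\le M$, fix $\varepsilon>0$ and $\rho\in(0,1)$ with $(1-|w|)|g'(w)|<\varepsilon$ for $|w|>\rho$, and split the supremum defining $\Phi_g(f_n)$: for $|z|\le\rho$ the factor $(1-|z|)^\gamma|f_n(z)|\le\sup_{|z|\le\rho}|f_n(z)|\to0$ while $(1-|z|)|g'(z)|$ stays bounded, and for $|z|>\rho$ the first factor is $\le M$ and the second $<\varepsilon$. Hence $\limsup_n\Phi_g(f_n)\le M\varepsilon$, so $\Phi_g(f_n)\to0$ and $\|V_gf_n\|_{-\gamma}\to0$ by the upper bound.

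For (i)$\Rightarrow$(ii) I note that $A^{-\gamma}_0$ is a closed $V_g$-invariant subspace of $A^{-\gamma}$, so the restriction of the compact operator $V_g$ carries the unit ball of $A^{-\gamma}_0$ into a relatively compact subset of $A^{-\gamma}$ lying in the closed set $A^{-\gamma}_0$, whence it is relatively compact in $A^{-\gamma}_0$. To close the equivalence I prove the contrapositive of the remaining implications: assume $g\notin\cB_0$ (if $g\notin\cB$ then by Proposition \ref{P.Con} the operators are not even continuous, hence not compact). Choose $\delta>0$ and $(w_n)\subseteq\D$ with $|w_n|\to1$ and $(1-|w_n|)|g'(w_n)|\ge\delta$, and set $f_n(z):=(1-|w_n|^2)^\gamma/(1-\overline{w_n}z)^{2\gamma}$. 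The M\"obius identity $1-|\varphi_{w_n}(z)|^2=(1-|w_n|^2)(1-|z|^2)/|1-\overline{w_n}z|^2\le1$ gives $\|f_n\|_{-\gamma}\le1$; each $f_n$ extends analytically across $\overline{\D}$, so $f_n\in A^{-\gamma}_0$ and $f_n\to0$ uniformly on compacta; and $(1-|w_n|)^\gamma|f_n(w_n)|=(1+|w_n|)^{-\gamma}\ge2^{-\gamma}$. The lower bound then yields $\|V_gf_n\|_{-\gamma}\ge 2^{-(\gamma+1)}(1-|w_n|)^\gamma|f_n(w_n)|\cdot(1-|w_n|)|g'(w_n)|\ge 2^{-(2\gamma+1)}\delta>0$, so $V_g$ is compact on neither $A^{-\gamma}$ nor $A^{-\gamma}_0$.

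The main obstacle is twofold and concentrated in the first and third steps. First, one must verify the two-sided norm equivalence with the correct powers of $(1-|z|)$; this is where the structure $(V_gf)'=fg'$ together with $(V_gf)(0)=0$ is essential. Second, for the converse one needs test functions that are simultaneously norm-bounded in $A^{-\gamma}$, null on compacta, sharply concentrated at $w_n$, \emph{and} members of the smaller space $A^{-\gamma}_0$; the M\"obius identity is exactly what makes all four requirements hold at once, and it is the device that lets the same sequence defeat compactness on both spaces. By comparison, the normal-family criterion and the restriction argument of (i)$\Rightarrow$(ii) are routine.
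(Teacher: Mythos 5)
Your proof is correct, but there is nothing in the paper to compare it against: Proposition \ref{P.Comp} is quoted there as a known result, with references to \cite[Theorems 1 and 2]{BCHMP} and \cite[Proposition 3.1]{Ma}, and no internal proof is given. What you have supplied is therefore a self-contained argument, and it is essentially the standard one from that literature. Its three ingredients are all sound: (a) the two-sided estimate $2^{-(\gamma+1)}\Phi_g(f)\le\|V_gf\|_{-\gamma}\le\gamma^{-1}\Phi_g(f)$, whose upper half is exactly the computation the paper records in Remark \ref{R.NormaInt} (boundedness of the integration operator $J\colon A^{-(\gamma+1)}\to A^{-\gamma}$ with norm at most $1/\gamma$) and whose lower half is the Cauchy-estimate boundedness of differentiation $D\colon A^{-\gamma}\to A^{-(\gamma+1)}$, both facts the paper itself invokes in Sections 3 and 4; (b) the Montel-type compactness criterion; and (c) the M\"obius-normalized test functions $f_n(z)=(1-|w_n|^2)^\gamma(1-\overline{w_n}z)^{-2\gamma}$, which are indeed simultaneously norm-bounded, members of $A_0^{-\gamma}$, $\tau_c$-null, and concentrated at $w_n$, so that the lower bound in (a) defeats compactness on both spaces at once. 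One small caution: the ``if and only if'' form of the criterion in (b), as you state it for $A_0^{-\gamma}$, is delicate, because a norm-bounded sequence in $A_0^{-\gamma}$ can have a Montel limit lying only in $A^{-\gamma}$, so the sufficiency half of the criterion for the little space does not follow from the bare normal-families argument you sketch. But your proof never uses that half: you obtain (ii) from (i) by restriction to the closed invariant subspace (invariance being guaranteed by Proposition \ref{P.Con} once $g\in\cB$), and in the contrapositive step you only use the easy necessity half of the criterion, which is valid on both spaces. Hence the chain (iii)$\Rightarrow$(i)$\Rightarrow$(ii)$\Rightarrow$(iii) is complete as written.
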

  	
Since  $A^{-\gamma}$ and $A^{-\gamma}_0$ are Banach spaces of analytic functions on $\D$ and, for any non-constant function $g\in \cB$,  the Volterra operator $V_g\in \cL(H(\D))$ is injective and satisfies both  $V_g(A^{-\gamma})\subseteq A^{-\gamma}$ and $V_g(A_0^{-\gamma})\subseteq A_0^{-\gamma}$ (cf. Proposition \ref{P.Con}), we can define the optimal domain for each of the operators $V_g\colon A^{-\gamma}\to A^{-\gamma}$ and $V_g\colon A_0^{-\gamma}\to A^{.\gamma}_0$ via Definition \ref{Def-B}. Namely,
\[
[V_g, A^{-\gamma}]:=\{f\in H(\D):\ V_gf\in A^{-\gamma}\}
\]
and
\[
[V_g, A_0^{-\gamma}]:=\{f\in H(\D):\ V_gf\in A^{-\gamma}_0\},
\]
where both spaces are  endowed, respectively, with the semi-norm
\begin{equation}\label{eq.normD}
	\|f\|_{[V_g, A^{-\gamma}]}:=\|V_gf\|_{-\gamma}\ \ \mbox{and}\ \ \|f\|_{[V_g, A_0^{-\gamma}]}:=\|V_gf\|_{-\gamma}.
 	\end{equation}
 	Since $V_g\in \cL(H(\D))$ is injective, Proposition \ref{P1}(ii) implies that 	$\|\cdot\|_{[V_g, A^{-\gamma}]}$ and $\|\cdot\|_{[V_g, A_0^{-\gamma}]}$ are actually   norms. Moreover, again by Proposition \ref{P1}(v)  we have that $[V_g, A_0^{-\gamma}]$ is a closed subspace of $[V_g, A^{-\gamma}]$.
 	
 	We point out, for the Hardy spaces $H^p$ on $\D$, with $1<p<\infty$, that the study of the optimal domain of $V_g\colon H^p\to H^p$ has recently  been  treated in \cite{BDNS}.
 	
 	\begin{prop}\label{P.D1} Let $g\in \cB$ be a non-constant function. For each $\gamma>0$, both $[V_g,A^{-\gamma}]$ and $[V_g, A^{-\gamma}_0]$ are Banach spaces.
 		\end{prop}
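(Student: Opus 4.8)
The plan is to prove that $[V_g,A^{-\gamma}]$ is a Banach space; the corresponding claim for $[V_g,A^{-\gamma}_0]$ then comes for free, since Proposition \ref{P1}(v), applied with $Y=A^{-\gamma}_0$ (a closed, $V_g$-invariant subspace of $A^{-\gamma}$), exhibits $[V_g,A^{-\gamma}_0]$ as a closed subspace of $[V_g,A^{-\gamma}]$, and a closed subspace of a Banach space is again a Banach space.

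For the first claim I would exploit the fact, recorded in Proposition \ref{P1}(iv), that the map $V_g\colon [V_g,A^{-\gamma}]\to A^{-\gamma}$, $f\mapsto V_gf$, is a linear isometry onto its range
\[
W:=V_g\big([V_g,A^{-\gamma}]\big)=\{h\in A^{-\gamma}:\ h(0)=0\ \text{and}\ h'/g'\in H(\D)\}.
\]
The description of $W$ uses that $V_g$ is injective on $H(\D)$, that $(V_gf)(0)=0$, and that $(V_gf)'=fg'$: thus $h\in H(\D)$ lies in the range of $V_g$ exactly when $h(0)=0$ and $f:=h'/g'$ defines an analytic function on $\D$. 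Because $V_g$ is an isometric bijection of $[V_g,A^{-\gamma}]$ onto $W$, the space $[V_g,A^{-\gamma}]$ is complete if and only if $W$ is complete, i.e.\ if and only if $W$ is a \emph{closed} subspace of the Banach space $A^{-\gamma}$. So the whole proof reduces to establishing that $W$ is closed in $A^{-\gamma}$.

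To prove that $W$ is closed, I would take a sequence $(h_n)_{n\in\N}\subseteq W$ with $h_n\to h$ in $A^{-\gamma}$ and show $h\in W$. Since the inclusion $A^{-\gamma}\subseteq H(\D)$ is continuous, also $h_n\to h$ in $H(\D)$, whence $h_n^{(k)}\to h^{(k)}$ uniformly on compact subsets of $\D$ for every $k$. Evaluation at $0$ gives $h(0)=\lim_n h_n(0)=0$. The key point is the divisibility condition $h'/g'\in H(\D)$. As $g$ is non-constant, $g'$ is not identically zero and its zeros in $\D$ are isolated, say of order $m(z_0):=\mathrm{ord}_{z_0}(g')$ at a zero $z_0$. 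For each $n$ the function $f_n:=h_n'/g'$ is analytic, so $h_n'$ vanishes at $z_0$ to order at least $m(z_0)$; equivalently $(h_n')^{(j)}(z_0)=0$ for $0\le j\le m(z_0)-1$. Passing to the limit in these finitely many derivative evaluations yields $(h')^{(j)}(z_0)=0$ for $0\le j\le m(z_0)-1$, so $h'$ too vanishes to order at least $m(z_0)$ at each zero of $g'$. Consequently the meromorphic quotient $h'/g'$ has only removable singularities and defines some $f\in H(\D)$; since $h'=fg'$ and $h(0)=0$, we conclude $h=V_gf$, so $h\in W$.

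The main obstacle is precisely the possible zeros of $g'$: if $g'$ were zero-free on $\D$ (as for $g(z)=z$ or $g(z)=-\mathrm{Log}(1-z)$), then $h'/g'$ is automatically analytic and closedness of $W$ is immediate. The content of the argument is therefore the observation that the local vanishing-order conditions cut $W$ out of $A^{-\gamma}$ by the derivative-evaluation functionals $h\mapsto (h')^{(j)}(z_0)$ at the zeros $z_0$ of $g'$, each of which is continuous on $H(\D)$, and that these conditions are stable under the locally uniform convergence furnished by convergence in $A^{-\gamma}$.
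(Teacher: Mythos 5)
Your proof is correct, but it takes a genuinely different route from the paper's. The paper proves completeness indirectly through Proposition \ref{P3}: it establishes that the inclusion $[V_g,A^{-\gamma}]\subseteq H(\D)$ is continuous by an explicit estimate --- for $|u|\le r$ one chooses a circle $|z|=s$ with $r<s<1$ avoiding the zeros of $g'$, applies the maximum modulus principle to bound $|f(u)|$ by $\tfrac{1}{m}\max_{|z|=s}|f(z)g'(z)|$ with $m=\min_{|z|=s}|g'(z)|>0$, and then controls $fg'=(V_gf)'$ via the continuity of the differentiation operator $D\colon A^{-\gamma}\to A^{-(\gamma+1)}$ --- after which the implication (i)$\Rightarrow$(iii) of Proposition \ref{P3} yields completeness. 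You instead transfer completeness through the isometry $V_g\colon [V_g,A^{-\gamma}]\to A^{-\gamma}$ (Proposition \ref{P1}(iv) together with the injectivity of $V_g$ on $H(\D)$) and show directly that the image $W=V_g(H(\D))\cap A^{-\gamma}$ is closed in $A^{-\gamma}$: membership in $W$ is cut out by the condition $h(0)=0$ together with, at each zero $z_0$ of $g'$, the finitely many derivative evaluations $(h')^{(j)}(z_0)=0$ for $0\le j\le \mathrm{ord}_{z_0}(g')-1$, and all of these functionals are continuous on $H(\D)$, hence their kernels are stable under the locally uniform convergence that $A^{-\gamma}$-convergence implies. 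Your argument is softer (no quantitative estimates, no appeal to $D\in\cL(A^{-\gamma},A^{-(\gamma+1)})$) and isolates the function-theoretic content, namely that divisibility by $g'$ survives locally uniform limits; the paper's argument proves more along the way, namely the continuity of $[V_g,A^{-\gamma}]\hookrightarrow H(\D)$, which is what Corollary \ref{CoroE} uses. Nothing downstream is lost with your route, however: once completeness is known, that continuity is recovered from the implication (iii)$\Rightarrow$(i) of Proposition \ref{P3} (a closed graph argument using the injectivity of $V_g$), so Corollary \ref{CoroE} still follows. Your treatment of $[V_g,A^{-\gamma}_0]$ --- a closed subspace of $[V_g,A^{-\gamma}]$ via Proposition \ref{P1}(v), hence itself Banach --- is exactly the paper's.
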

 	
\begin{proof} In view of Proposition \ref{P3} to establish that $[V_g,A^{-\gamma}]$ is a Banach space,  it suffices to show that the inclusion map $J\colon [V_g, A^{-\gamma}]\to H(\D)$, defined by  $f\mapsto Jf:=f$, is continuous. To this effect, note that Theorem 5.5 in \cite{Du}, with $p=\infty$, implies that the differentiation operator $D\in \cL(H(\D))$ given by  $Dh:=h'$, for $h\in H(\D)$, maps $A^{-\gamma}$ into $A^{-(\gamma+1)}$. Since the evaluation functionals at points of $\D$ belong to $(A^{-\gamma})^*$ and $(A^{-(\gamma+1)})^*$, a closed graph argument shows that $D\in \cL(A^{-\gamma},A^{-(\gamma+1)})$; see also \cite[Theorem 2.1(a)]{HL}.
	Hence, there exists $C>0$ such that
	\begin{equation}\label{eq.Der}
		\|h'\|_{-(\gamma+1)}=\|Dh\|_{-(\gamma+1)}\leq C\|h\|_{-\gamma},\quad h\in A^{-\gamma}.
	\end{equation}
	Now, fix $r\in (0,1)$ and $f\in [V_g,A^{-\gamma}]$. Since $g'$ is not identically zero on $\D$ (as $g$ is a non-constant function) and the zeros of $g'\in H(\D)$ are isolated, there exists $s\in (r,1)$ such that $g'(z)\not=0$ for every $z\in \D$ such that $|z|=s$. Accordingly, $m:=\min_{|z|=s}|g'(z)|>0$.
	It follows, for every $u\in\D$ satisfying $|u|\leq r$, that
	\begin{equation}\label{eq,st}
	|f(u)|\leq \max_{|z|=s}|f(z)|=\max_{|z|=s}\frac{|f(z)g'(z)|}{|g'(z)|}\leq \frac{1}{m}\max_{|z|=s}|f(z)g'(z)|.
	\end{equation}
	On the other hand,
	$f(z)g'(z)=(V_gf)'(z)$, for $z\in\D$,
and hence, via \eqref{eq.Der}, we obtain that
	\begin{equation}\label{eq.Deri}
\|fg'\|_{-(\gamma+1)}=	\|(V_gf)'\|_{-(\gamma+1)}\leq C\|V_gf\|_{-\gamma}=C\|f\|_{[V_g, A^{-\gamma}]}.
\end{equation}
	Combining \eqref{eq,st} and \eqref{eq.Deri} it follows, for every $u\in\D$ with $|u|\leq r$, that
	\begin{align*}
		|f(u)|\leq & \frac{1}{m}\max_{|z|=s}|f(z)g'(z)|=\frac{1}{m}\max_{|z|=s}|(V_gf)'(z)|\\
		=&\frac{1}{m}\frac{1}{(1-s)^{\gamma+1}}(1-s)^{\gamma+1}\max_{|z|=s}|(V_gf)'(z)|\\
		=&\frac{1}{m}\frac{1}{(1-s)^{\gamma+1}}\max_{|z|=s}(1-|z|)^{\gamma+1}|(V_gf)'(z)|\\
		\leq & \frac{1}{m}\frac{1}{(1-s)^{\gamma+1}}\sup_{z\in\D}(1-|z|)^{\gamma+1}|(V_gf)'(z)|\\
		\leq & \frac{C}{m(1-s)^{\gamma+1}}\sup_{z\in\D}(1-|z|)^{\gamma}|(V_gf)(z)|\\
		=& \frac{C}{m(1-s)^{\gamma+1}}\|V_gf\|_{-\gamma}=\frac{C}{m(1-s)^{\gamma+1}}\|f\|_{[V_g,A^{-\gamma}]}.
	\end{align*}
In view of \eqref{eq.norme-sup} this implies that
\[
q_r(f)=\sup_{|u|\leq r}|f(u)|\leq \frac{C}{m(1-s)^{\gamma+1}}\|f\|_{[V_g,A^{-\gamma}]}.
\]
Since $r\in (0,1)$ and $f\in [V_g, A^{-\gamma}]$ are arbitrary, it follows that the inclusion $J\colon [V_g, A^{-\gamma}]\to H(\D)$ is continuous. As noted above, this implies that $[V_g,A^{-\gamma}]$ is a Banach space.

Recall that  $A^{-\gamma}_0$ is a closed subspace of $A^{-\gamma}$ and $V_g(A^{-\gamma}_0)\subseteq A^{-\gamma}_0$. So,   Proposition \ref{P1}(v) implies that $[V_g, A^{-\gamma}_0]$ is a closed subspace of  $[V_g, A^{-\gamma}]$. Accordingly, $[V_g, A^{-\gamma}_0]$ is also a Banach space.
\end{proof}

\begin{corollary}\label{CoroE}
Let $g\in\cB$ be a non-constant function. For each $\gamma>0$, both of the optimal domain spaces $[V_g,A^{-\gamma}]$ and  $[V_g,A_0^{-\gamma}]$ are Banach spaces of analytic functions on $\D$. In particular, their dual spaces contain $\{\delta_z:\ z\in\D\}$.
\end{corollary}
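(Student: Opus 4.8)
The plan is to deduce the corollary directly from Proposition \ref{P.D1} together with the equivalences recorded in Proposition \ref{P3}. Indeed, the substantive work — verifying that the natural inclusion of the optimal domain space into $H(\D)$ is continuous, equivalently that $([V_g,A^{-\gamma}],\|\cdot\|_{[V_g,A^{-\gamma}]})$ is a Banach space — has already been carried out in Proposition \ref{P.D1}. Thus the corollary amounts to reading off the consequences guaranteed by the abstract framework, and I do not expect any genuine obstacle to remain.

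Concretely, I would fix $\gamma>0$ and apply Proposition \ref{P3} with $T:=V_g$ and $X:=A^{-\gamma}$. The hypotheses are in place: $V_g\in\cL(H(\D))$ is injective (as recorded after \eqref{eq.V}); the space $A^{-\gamma}$ is a Banach space of analytic functions on $\D$ (established in Section 3); and $V_g(A^{-\gamma})\subseteq A^{-\gamma}$ by Proposition \ref{P.Con}, since $g\in\cB$. Proposition \ref{P.D1} asserts precisely that $[V_g,A^{-\gamma}]$ is a Banach space, which is property (iii) of Proposition \ref{P3}. Hence, by the final assertion of Proposition \ref{P3}, the space $[V_g,A^{-\gamma}]$ is a Banach space of analytic functions on $\D$ and its dual contains $\{\delta_z:\ z\in\D\}$.

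The argument for $[V_g,A^{-\gamma}_0]$ is identical: one applies Proposition \ref{P3} with $X:=A^{-\gamma}_0$, using that $A^{-\gamma}_0$ is likewise a Banach space of analytic functions on $\D$, that $V_g(A^{-\gamma}_0)\subseteq A^{-\gamma}_0$ (again Proposition \ref{P.Con}), and that $[V_g,A^{-\gamma}_0]$ is a Banach space by the second half of Proposition \ref{P.D1}. Since the only nontrivial hypothesis of Proposition \ref{P3}, property (iii), is furnished by Proposition \ref{P.D1} in both cases, this corollary is essentially a bookkeeping step recording that the conclusions of Proposition \ref{P3} apply to each of the two optimal domain spaces.
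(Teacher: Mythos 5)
Your proof is correct and follows essentially the same route as the paper: the paper's own proof likewise invokes Proposition \ref{P.D1} to obtain property (iii) of Proposition \ref{P3} and then reads off the final assertion of that proposition, using the injectivity of $V_g$ on $H(\D)$. Your version merely spells out the hypothesis checks (injectivity, Proposition \ref{P.Con}, and the Banach-space-of-analytic-functions property of $A^{-\gamma}$ and $A^{-\gamma}_0$) that the paper leaves implicit.
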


\begin{proof}
Proposition \ref{P.D1} shows that both $[V_g,A^{-\gamma}]$ and  $[V_g,A_0^{-\gamma}]$ are Banach spaces. Since $V_g\in \cL(H(\D))$ is injective, Proposition \ref{P3} gives the desired conclusion.
\end{proof}

The following result gives an alternate description of the optimal domain spaces $[V_g,A^{-\gamma}]$ and  $[V_g,A_0^{-\gamma}]$.

\begin{prop}\label{P.Description} Let $g\in \cB$ be a non-constant function and $\gamma>0$.
	\begin{itemize}
		\item[\rm (i)] The optimal domain space $[V_g,A^{-\gamma}]=\{f\in H(\D):\ fg'\in A^{-(\gamma+1)}\}$. Moreover, there exists $C>0$ satisfying
		  \[\|fg'\|_{-(\gamma+1)}\leq C\|f\|_{[V_g,A^{-\gamma}]},\quad  f\in [V_g,A^{-\gamma}].
		  \]
		\item[\rm (ii)] The optimal domain space $[V_g,A_0^{-\gamma}]=\{f\in H(\D):\ fg'\in A_0^{-(\gamma+1)}\}$. Moreover, there exists $C>0$ satisfying
		\[
		\|fg'\|_{-(\gamma+1)}\leq C\|f\|_{[V_g,A^{-\gamma}_0]},\quad f\in [V_g,A_0^{-\gamma}].
		\]
	\end{itemize}
	\end{prop}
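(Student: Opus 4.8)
The plan is to pivot everything on the pointwise identity $(V_gf)'(z)=f(z)g'(z)$ for $z\in\D$, which is immediate from \eqref{eq.V}. Membership $f\in[V_g,A^{-\gamma}]$ means exactly $V_gf\in A^{-\gamma}$, and since differentiation lowers the admissible growth exponent by one while integration raises it by one, I expect $V_gf\in A^{-\gamma}$ to be equivalent to $fg'=(V_gf)'\in A^{-(\gamma+1)}$, with the norm comparison coming for free from the continuity of the differentiation operator. So the proof splits into a ``differentiation'' inclusion (the easy half, already available) and an ``integration'' inclusion (where the real content sits), each carried out twice: once in the $A^{-\gamma}$ scale and once in the little-oh scale.

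For the forward inclusion in (i) I would simply invoke \eqref{eq.Der}: if $f\in[V_g,A^{-\gamma}]$ then $V_gf\in A^{-\gamma}$, whence $fg'=(V_gf)'=D(V_gf)\in A^{-(\gamma+1)}$ and $\|fg'\|_{-(\gamma+1)}=\|(V_gf)'\|_{-(\gamma+1)}\le C\|V_gf\|_{-\gamma}=C\|f\|_{[V_g,A^{-\gamma}]}$. This single computation yields both the inclusion $\subseteq$ and the asserted inequality.

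The reverse inclusion in (i) requires an \emph{integration lemma}: if $h\in H(\D)$ satisfies $h(0)=0$ and $h'\in A^{-(\gamma+1)}$, then $h\in A^{-\gamma}$ with $\|h\|_{-\gamma}\le\frac1\gamma\|h'\|_{-(\gamma+1)}$. I would prove this by integrating along the radius, $h(re^{i\theta})=\int_0^r h'(te^{i\theta})e^{i\theta}\,dt$, and using the elementary estimate $\int_0^r(1-t)^{-(\gamma+1)}\,dt\le\frac1\gamma(1-r)^{-\gamma}$. Applying the lemma to $h=V_gf$, which vanishes at $0$ and has derivative $fg'$, shows that $fg'\in A^{-(\gamma+1)}$ forces $V_gf\in A^{-\gamma}$, i.e. $f\in[V_g,A^{-\gamma}]$, completing (i).

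For part (ii) the same two steps are repeated in the little-oh category, and this is where the only genuine difficulties lie. First, for the forward inclusion I must check that $D$ maps $A_0^{-\gamma}$ into $A_0^{-(\gamma+1)}$; I would get this from a Cauchy estimate on the circle $|\zeta-z|=\rho$ with $\rho=(1-|z|)/2$, which gives $(1-|z|)^{\gamma+1}|h'(z)|\le 2^{\gamma+1}\sup_{|\zeta-z|=\rho}(1-|\zeta|)^\gamma|h(\zeta)|$, and then observe that every such $\zeta$ satisfies $|\zeta|\ge(3|z|-1)/2\to1$, so the right-hand side tends to $0$ by $h\in A_0^{-\gamma}$. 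The norm inequality in (ii) then reuses the very constant $C$ of \eqref{eq.Der}, since $A_0^{-\gamma}\subseteq A^{-\gamma}$. The hard part will be the little-oh refinement of the integration lemma for the reverse inclusion: given $\varepsilon>0$, I would choose $\delta$ with $(1-t)^{\gamma+1}|h'(te^{i\theta})|<\varepsilon$ for $t>\delta$, split the radial integral at $\delta$, bound the inner piece by the constant $\max_{|w|=\delta}|h(w)|$ which the prefactor $(1-r)^\gamma\to0$ annihilates, and bound the outer piece by $\varepsilon/\gamma$; this $\varepsilon$-splitting is the only point where the argument is more than a transcription of the $A^{-\gamma}$ case, and it yields $V_gf\in A_0^{-\gamma}$ whenever $fg'\in A_0^{-(\gamma+1)}$.
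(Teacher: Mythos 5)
Your proposal is correct, and its skeleton coincides with the paper's: both pivot on the identity $(V_gf)'=fg'$ together with the equivalence ``$h\in A^{-\gamma}$ (resp. $A_0^{-\gamma}$) if and only if $h'\in A^{-(\gamma+1)}$ (resp. $A_0^{-(\gamma+1)}$)'', and both obtain the stated norm inequality from the continuity of the differentiation operator, i.e. from \eqref{eq.Der} and \eqref{eq.Deri}. The difference is one of self-containedness: where the paper simply cites the exponent-shift equivalences (\cite[Theorem 5.3]{Du} and \cite[Proposition 2.2(a)]{HL} for the big-oh case, and Fact 2 in the proof of \cite[Theorem 3.2]{ABR-R} for the little-oh case), you prove them inline. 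Your radial-integration lemma with constant $\frac{1}{\gamma}$ is in substance the paper's own Remark \ref{R.NormaInt} on $\|J\|_{A^{-(\gamma+1)}\to A^{-\gamma}}\leq \frac{1}{\gamma}$; your Cauchy estimate on the circle $|\zeta-z|=(1-|z|)/2$ establishes $D\colon A_0^{-\gamma}\to A_0^{-(\gamma+1)}$; and your $\varepsilon$-splitting of the radial integral supplies the little-oh integration step. All three inline arguments are sound; in particular the $\varepsilon$-splitting yields a bound that is uniform in $\theta$, which is exactly what membership in $A_0^{-\gamma}$ requires. What the paper's route buys is brevity; what yours buys is a proof with no external dependencies and with explicit constants.
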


 \begin{proof} (i) According to Definition \ref{Def-B} and \eqref{eq.V}, a function $f\in H(\D)$ belongs to $[V_g, A^{-\gamma}]$ if and only if the function $(V_gf)(z)=\int_0^zf(\xi)g'(\xi)\,d\xi$, for $z\in\D$, belongs to $A^{-\gamma}$. But, $(V_gf)'=fg'$ and so \cite[Theorem 5.3]{Du} implies that this is equivalent to the fact that $fg'\in A^{-(\gamma+1)}$; see also \cite[Proposition 2.2(a)]{HL}. This shows that $[V_g,A^{-\gamma}]=\{f\in H(\D):\ fg'\in A^{-(\gamma+1)}\}$ as sets.
 	
 	 Let $C>0$ be a constant satisying \eqref{eq.Deri} for every $f\in  [V_g, A^{-\gamma}]$. It follows that
 	\[
 	\|fg'\|_{-(\gamma+1)}=\|(V_gf)'\|_{-(\gamma+1)}\leq C\|V_gf\|_{-\gamma}=C\|f\|_{[V_g,A^{-\gamma}]},\quad f\in [V_g, A^{-\gamma}],
 	\]
 	which is the stated inequality.
 	
 	 (ii) Observe that a function $f\in A^{-\gamma}_0$ if and only if $f'\in A^{-(\gamma+1)}_0$ (see the proof of Fact 2 in  \cite[Theorem 3.2]{ABR-R}). So, the result follows by a similar  argument as in part (i).
 	\end{proof}

 \begin{remark}\label{R1}\rm  Let $\gamma>0$ and  $g\in \cB$ satisfy $|g'|>0$ on $\D$ and $\frac{1}{g'}\in H^\infty(\D)$.
 	
 	(i) Let the linear space $E:=\{f\in H(\D):\ fg'\in A^{-(\gamma+1)}\}$ be endowed with the norm
 	\[
 	\|f\|_E:=\|fg'\|_{-(\gamma+1)},\quad f\in E.
 	\]
 	The claim is that  $E$ is a Banach space isomorphic to $[V_g, A^{-\gamma}]$. Indeed, let $(f_n)_{n\in\N}\subseteq E$ be a Cauchy sequence in $E$, that is, $(f_ng')_{n\in\N}$ is a Cauchy sequence in $A^{-(\gamma+1)}$. Then there exists $h\in A^{-(\gamma+1)}$ such that $f_ng'\to h$ in $A^{-(\gamma+1)}$. But,  the multiplication operator $M_{\frac{1}{g'}}\colon A^{-(\gamma+1)}\to A^{-(\gamma+1)}$ is continuous (as $\frac{1}{g'}\in H^\infty(\D)$) and so  $f_n\to \frac{h}{g'}$ in $A^{-(\gamma+1)}$. Since $\frac{h}{g'}\in H(\D)$, it follows that $f_n\to \frac{h}{g'}$ in $E$. Hence, $E$ is a Banach space.
 	
 	Now, by Proposition \ref{P.Description}(i) the  operator $I\colon [V_g,A^{-\gamma}]\to E$, defined by $f\mapsto f$, is continuous. On the other hand, by Proposition \ref{P.D1}, $[V_g, A^{-\gamma}]$ is  a Banach space. So, by the open mapping theorem we can conclude that  $E$ is a Banach space which is isomorphic to $[V_g, A^{-\gamma}]$, that is, the norms $\|\cdot\|_{[V_g,A^{-\gamma}]}$ and $\|\cdot\|_E$ are equivalent.
 	
 	(ii) Let the linear space $E_0:=\{f\in H(\D):\ fg'\in A_0^{-(\gamma+1)}\}$ be  endowed with the norm $\|\cdot\|_E$. Clearly, $E_0$ is a subspace of $E$. Moreover, $E_0$ is a Banach space. Indeed, $E_0$ is a closed subspace of $E$. This follows after  observing that if $(f_n)_{n\in\N}\subseteq E_0$ is a sequence converging to some $f$ in $E$, then $(f_ng')_{n\in\N}\subseteq A_0^{-(\gamma+1)}$ and $f_ng'\to fg'$ in $A^{-(\gamma+1)}$. Hence, $fg'\in A^{-(\gamma+1)}_0$ as $A_0^{-(\gamma+1)}$ is a closed subspace of $A^{-(\gamma+1)}$.
 	
 	 As in part (i), it can be argued that $E_0$ is isomorphic to  $[V_g, A^{-\gamma}_0]$.
 \end{remark}

We apply Proposition \ref{P.Description}  to show that the optimal domain space of $V_g$ can be genuinely larger than $A^{-\gamma}$.

\begin{example}\label{E1}\rm  Let $g\in \cB$ satisfy $|g'|>0$ on $\D$ and $\frac{1}{g'}\in H^\infty$. For example, $g'(z)=\frac{1}{1-z}$, for $z\in\D$.  Suppose  there exists $w\in \C$ with $|w|=1$ such that $\lim_{r\to 1^-}|g'(rw)|(1-r)=0$. In this case the claim is that  $A^{-\gamma}$ is a \textit{proper} subspace of $[V_g,A^{-\gamma}]$, for all $\gamma>0$. For instance, consider  $g(z):=-{\rm Log}(1-z)$, for $z\in\D$, and  $w=-1$. Or, if $g\in \cB_0$ satisfies  $|g'|>0$ on $\D$ and $\frac{1}{g'}\in H^\infty$, then the same features occur because $\lim_{|z|\to 1^-}(1-z)|g'(z)|=0$ implies that $\lim_{r\to 1^-}|g'(rw)|(1-r)=0$.
	
To establish the claim let $f(z):=\frac{1}{g'(z)(1-\overline{w}z)^{\gamma+1}}$, for $z\in\D$. Since $1-\overline{w}z=0$ if and only if $z=w\not\in\D$, and $g'(z)\not=0$ for all $z\in\D$, we have that $f\in H(\D)$. On the other hand, $|1-\overline{w}z|\geq 1-|\overline{w}z|=1-|z|$ for all $z\in\D$, and hence, $\sup_{z\in\D}\frac{(1-|z|)^{\gamma+1}}{|1-\overline{w}z|^{\gamma+1}}\leq 1$. It follows that  the function
\[
f(z)g'(z)=\frac{1}{(1-\overline{w}z)^{\gamma+1}},\quad z\in\D,
\]
belongs to $A^{-(\gamma+1)}$. In view of Proposition \ref{P.Description}(i) we conclude that $f\in [V_g, A^{-\gamma}]$. But,
\begin{align*}
	 \sup_{z\in\D}(1-|z|)^\gamma|f(z)|& \geq \sup_{r\in (0,1)}(1-|rw|)^{\gamma}\frac{1}{|g'(rw)||1-\overline{w}rw|^{\gamma+1}} \\
	& =\sup_{r\in (0,1)}\frac{1}{|g'(rw)||1-r|}=\infty
\end{align*}
because $\frac{1}{g'}\in H^\infty$. Accordingly, $f\not\in A^{-\gamma}$.

Suppose, in addition, that  the function  $g\in \cB$  satisfies  $|g'(rw)|(1-r)^{1/2}\leq c$ for every $r\in (0,1)$ and some constant $c>0$. The claim is that also  $A_0^{-\gamma}$ is a \textit{proper} subspace of $[V_g,A_0^{-\gamma}]$. Indeed, let
$f_0(z):=(1-z)^{1/2}f(z)$, for $z\in\D$. Since $\lim_{|z|\to 1^-}(1-z)^{1/2}=0$ and $fg'\in A^{-(\gamma+1)}$,  the function
\[
f_0(z)g'(z)=(1-z)^{1/2}f(z)g'(z),\quad z\in\D,
\]
belongs to $A_0^{-(\gamma+1)}$. In view of Proposition \ref{P.Description}(ii) we can conclude that $f\in [V_g, A^{-\gamma}]$. But, $f\not\in A_0^{-\gamma}$ because, for every $r\in (0,1)$, we have
\begin{align*}
&(1-|rw|)^{\gamma}|f(rw)|\geq \frac{(1-|rw|)^{\gamma}(1-|rw|)^{1/2}}{|g'(rw)|1-\overline{w}rw|^{\gamma+1}}\\
&=\frac{(1-r)^{\gamma+1/2}}{|g'(rw)|(1-r)^{\gamma+1}}=\frac{1}{|g'(rw)|(1-r)^{1/2}}\geq \frac{1}{c},
\end{align*}
which implies that $\lim_{|z|\to 1^-}(1-|z|)^\gamma|f(z)\not=0$.
	\end{example}

We now formulate useful descriptions of the Banach spaces  $A^{-\gamma}$ and  $A^{-\gamma}_0$ in terms of the optimal domain of Volterra operators.

\begin{prop}\label{P.Intr} For every $\gamma>0$ the Korenblum growth space $A^{-\gamma}$ satisfies
	\begin{equation}
		A^{-\gamma}=\cap_{g\in \cB}[V_g,A^{-\gamma}].
	\end{equation}
	\end{prop}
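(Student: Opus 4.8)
The plan is to prove the two inclusions separately. The inclusion $A^{-\gamma}\subseteq\cap_{g\in\cB}[V_g,A^{-\gamma}]$ is immediate: for non-constant $g\in\cB$ Proposition \ref{P.Con} gives $V_g(A^{-\gamma})\subseteq A^{-\gamma}$, whence $A^{-\gamma}\subseteq[V_g,A^{-\gamma}]$ by Proposition \ref{P1}(iii), while for constant $g$ the operator $V_g$ is null and $[V_g,A^{-\gamma}]=H(\D)$. Intersecting over all $g$ yields $A^{-\gamma}\subseteq\cap_{g\in\cB}[V_g,A^{-\gamma}]$.

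The substance is the reverse inclusion. I would fix $f\in\cap_{g\in\cB}[V_g,A^{-\gamma}]$ and aim to show $f\in A^{-\gamma}$. First I would use Proposition \ref{P.Description}(i) to translate membership into multiplier form: for every non-constant $g\in\cB$ one has $fg'\in A^{-(\gamma+1)}$ (and trivially also for constant $g$). Hence the linear map $\Phi_f\colon\cB\to A^{-(\gamma+1)}$, $\Phi_f(g):=fg'$, is well defined. I would then verify that $\Phi_f$ has closed graph: if $g_n\to g$ in $\cB$ and $fg_n'\to\psi$ in $A^{-(\gamma+1)}$, then both convergences persist in $H(\D)$ (since $\cB$ and $A^{-(\gamma+1)}$ are Banach spaces of analytic functions and both differentiation and multiplication by the fixed $f$ are continuous on $H(\D)$), forcing $\psi=fg'$. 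The closed graph theorem then supplies a constant $K>0$ with
\[
\|fg'\|_{-(\gamma+1)}\leq K\|g\|_{\cB},\qquad g\in\cB.
\]

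To conclude I would test this uniform estimate against the functions $g_w$ determined by $g_w'(z)=\tfrac{1}{1-\overline{w}z}$ and $g_w(0)=0$, one for each $w$ with $|w|=1$. Since $|1-\overline{w}z|\geq 1-|z|$, one gets $\|g_w\|_{\cB}\leq 1$ uniformly in $w$, so $\|fg_w'\|_{-(\gamma+1)}\leq K$ for all such $w$. Evaluating radially, at $z=tw$ with $t\in[0,1)$ one has $|g_w'(tw)|=(1-t)^{-1}$, whence
\[
(1-t)^{\gamma}|f(tw)|=(1-t)^{\gamma+1}|f(tw)|\,|g_w'(tw)|\leq\|fg_w'\|_{-(\gamma+1)}\leq K.
\]
As every $z\in\D\setminus\{0\}$ has the form $tw$ with $t=|z|$ and $|w|=1$, this gives $(1-|z|)^{\gamma}|f(z)|\leq K$ throughout $\D$, i.e. $f\in A^{-\gamma}$.

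The point demanding care is exactly the passage to a bound uniform over the direction $w$. A single test function $g_w$ controls $(1-|z|)^{\gamma}|f(z)|$ only along the radius towards $w$, by a constant a priori depending on $w$; and no single $g\in\cB$ can make $|g'(z)|$ comparable to $(1-|z|)^{-1}$ in all directions simultaneously. The closed graph theorem applied to $\Phi_f$ (equivalently, the uniform boundedness of the multiplication operators $M_f\colon A^{-1}\to A^{-(\gamma+1)}$, i.e. the identity $\cM(A^{-1},A^{-(\gamma+1)})=A^{-\gamma}$) is what upgrades the family of direction-dependent bounds to the single constant $K$, and this is the crux of the argument.
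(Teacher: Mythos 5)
Your proof is correct and takes essentially the same route as the paper: the paper applies the closed graph theorem to $S_f\colon g\mapsto V_gf$ from $\cB$ into $A^{-\gamma}$ and then composes with the differentiation operator $D\colon A^{-\gamma}\to A^{-(\gamma+1)}$, arriving at exactly your uniform estimate $\sup_{z\in\D}(1-|z|)^{\gamma+1}|f(z)g'(z)|\leq C\|g\|_{\cB}$, whereas you apply the closed graph theorem directly to $g\mapsto fg'$ via Proposition \ref{P.Description}(i) — a difference of packaging only. The concluding test functions are also the same family: the paper's $g_u$ with $g_u'(z)=\bigl(1-\tfrac{\overline{u}}{|u|}z\bigr)^{-1}$, indexed by $u\in\D\setminus\{0\}$, coincide with your $g_w$ indexed by $|w|=1$ and evaluated radially.
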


\begin{proof}
	From Proposition \ref{P1}(iii) and Proposition \ref{P.Description} it follows that $A^{-\gamma}\subseteq \cap_{g\in \cB}[V_g,A^{-\gamma}]$.
	
	To establish the reverse inclusion,  fix $f\in \cap_{g\in \cB}[V_g,A^{-\gamma}]$ and consider the operator $S_f\colon H(\D)\to H(\D)$ defined by
	\[
	(S_fg)(z):=\int_0^zg'(\xi)f(\xi)\,d\xi=\int_0^zf(\xi)(Dg)(\xi)\,d\xi,\quad g\in H(\D),\ z\in\D.
	\]
	The facts that $f\in H(\D)$ and $D\in \cL(H(\D))$ clearly imply  that $S_f\in \cL(H(\D))$; see \eqref{eq.norme-sup} for the definition of $\tau_c$.
	On the other hand,  for each  $g\in\cB$, we have that  $V_gf\in A^{-\gamma}$ (cf. Proposition \ref{P1}(iii) and Proposition \ref{P.Description}) and
	\[
	(S_fg)(z)=\int_0^zg'(\xi)f(\xi)\,d\xi=(V_gf)(z),\quad  z\in\D.
	\]
	Therefore, $S_f(\cB)\subseteq A^{-\gamma}$ and hence, the linear map $S_f\colon \cB\to A^{-\gamma}$. Moreover, the operator  $S_f\colon \cB\to A^{-\gamma}$ has a closed graph and hence, it is continuous. To see this let $(g_n)_{n\in\N}\subseteq \cB$ be a sequence such that $g_n\to g$ in $\cB$ and $S_fg_n\to h$ in $A^{-\gamma}$ as $n\to\infty$. Since both $\cB$ and $A^{-\gamma}$ are Banach spaces of analytic functions on $\D$ we have that  $g_n\to g$  and $S_fg_n\to h$ in $H(\D)$ as $n\to\infty$. But,  $S_f\in \cL(H(\D))$ and so  $S_fg_n\to S_fg$ in $H(\D)$ as $n\to\infty$. Accordingly,  $S_fg=h$. So, $S_f\in \cL(\cB, A^{-\gamma})$.
	
	Recall from above that the differentiation  operator $D\colon A^{-\gamma}\to A^{-(\gamma+1)}$ is continuous and hence, also the operator $D\circ S_f\colon \cB\to A^{-(\gamma+1)}$, given by $g\mapsto g'f$, is continuous. Accordingly. there exists $C>0$ such that
	\begin{equation}\label{eq.DD}
		\|(D\circ S_f)g\|_{-(\gamma+1)}=\sup_{z\in\D}(1-|z|)^{\gamma+1}|g'(z)f(z)|\leq  C\|g\|_{\cB},\quad g\in\cB.
		\end{equation}
	In particular, for $g(z)=z$, for $z\in\D$,  it follows from \eqref{eq.DD} and $\|g\|_{\cB}=1$ that $|f(0)|\leq C$.
	
	Now, for each $u\in\D\setminus \{0\}$ let $g_u\in H(\D)$ satisfy $g'_u(z):=\frac{1}{1-\frac{\overline{u}}{|u|}z}$, for $z\in\D$, and $g_u(0)=0$ (namely, $g_u(z)=-\frac{|u|}{\overline{u}}{\rm Log} (1-\frac{\overline{u}}{|u|}z)$. Then, for each $u\in\D\setminus \{0\}$, we have that $\sup_{z\in\D}(1-|z|)|g'_u(z)|\leq 1$ and so $g_u\in \cB$.  Moreover, $(1-|u|)|g_u'(u)|=\frac{1-|u|}{|1-\frac{\overline{u}u}{|u|}|}=1$ for every  $u\in\D\setminus \{0\}$. Hence, $\|g_u\|_{\cB}=\sup_{z\in\D}(1-|z|)|g'_u(z)|=1$. Via \eqref{eq.normbloch} and \eqref{eq.DD} we can conclude that
	\[
	(1-|u|)^{\gamma+1}|g_u'(u)||f(u)|\leq C\sup_{z\in\D}(1-|z|)|g'_u(z)|= C,\quad u\in\D\setminus \{0\},
	\]
	 that is,
	\[
	\frac{(1-|u|)^{\gamma+1}|f(u)|}{|1-\frac{\overline{u}u}{|u|}|}=(1-|u|)^\gamma|f(u)|\leq C, \quad u\in\D\setminus \{0\}.
	\]
	It follows that $f\in A^{-\gamma}$.
\end{proof}

The analogue of Proposition \ref{P.Intr} for $A_0^{-\gamma}$ is as follows.

\begin{prop}\label{PropF}For each $\gamma>0$, the Korenblum growth space $A^{-\gamma}_0$ satisfies
	\[
	A_0^{-\gamma}=\cap_{g\in \cB}[V_g,A^{-\gamma}_0].
	\]
\end{prop}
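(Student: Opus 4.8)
The plan is to prove the two inclusions $A_0^{-\gamma}\subseteq\cap_{g\in\cB}[V_g,A_0^{-\gamma}]$ and $\cap_{g\in\cB}[V_g,A_0^{-\gamma}]\subseteq A_0^{-\gamma}$ separately, following the template of Proposition \ref{P.Intr} but tracking the ``little-oh'' condition throughout. The first inclusion is the easy direction: if $f\in A_0^{-\gamma}$ and $g\in\cB$, then by Proposition \ref{P1}(iii) (applied to $A_0^{-\gamma}$ in place of $X$) we have $f\in[V_g,A_0^{-\gamma}]$ directly, since $A_0^{-\gamma}\subseteq[V_g,A_0^{-\gamma}]$. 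Intersecting over all $g\in\cB$ gives $A_0^{-\gamma}\subseteq\cap_{g\in\cB}[V_g,A_0^{-\gamma}]$. Alternatively one can invoke Proposition \ref{P.Description}(ii): for $f\in A_0^{-\gamma}$ one checks $fg'\in A_0^{-(\gamma+1)}$, but the cleaner route is the general inclusion $X\subseteq[V_g,X]$.

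For the reverse inclusion I would fix $f\in\cap_{g\in\cB}[V_g,A_0^{-\gamma}]$ and reuse the operator $S_f\colon H(\D)\to H(\D)$, $(S_fg)(z):=\int_0^z f(\xi)g'(\xi)\,d\xi=(V_gf)(z)$, from the proof of Proposition \ref{P.Intr}. Since $f\in[V_g,A_0^{-\gamma}]$ for every $g\in\cB$, we now have $S_f(\cB)\subseteq A_0^{-\gamma}$, and exactly the same closed-graph argument (both $\cB$ and $A_0^{-\gamma}$ being Banach spaces of analytic functions on $\D$, together with $S_f\in\cL(H(\D))$) shows that $S_f\in\cL(\cB,A_0^{-\gamma})$. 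The key point is that $A_0^{-\gamma}$ is a \emph{closed} subspace of $A^{-\gamma}$, so $S_f\colon\cB\to A^{-\gamma}$ is continuous with values in $A_0^{-\gamma}$; composing with the continuous differentiation operator $D\colon A^{-\gamma}\to A^{-(\gamma+1)}$ yields, just as in \eqref{eq.DD}, a constant $C>0$ with
\[
\sup_{z\in\D}(1-|z|)^{\gamma+1}|g'(z)f(z)|\leq C\|g\|_{\cB},\quad g\in\cB.
\]
Feeding in the same test functions $g_u$ (with $g_u'(z)=\frac{1}{1-\frac{\overline{u}}{|u|}z}$) then gives $(1-|u|)^\gamma|f(u)|\leq C$ for all $u\in\D\setminus\{0\}$, so at least $f\in A^{-\gamma}$.

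The real work, and the step I expect to be the main obstacle, is upgrading $f\in A^{-\gamma}$ to $f\in A_0^{-\gamma}$, i.e.\ establishing the decay $\lim_{|z|\to1^-}(1-|z|)^\gamma|f(z)|=0$ rather than mere boundedness. The family $\{g_u\}$ used above has uniformly bounded Bloch norm but does \emph{not} lie in $\cB_0$, so it cannot by itself detect the little-oh condition; the uniform bound $C$ is the best one extracts from it. The plan here is to exploit the hypothesis more carefully: by Proposition \ref{P.Description}(ii) membership $f\in[V_g,A_0^{-\gamma}]$ is equivalent to $fg'\in A_0^{-(\gamma+1)}$, so for \emph{every} $g\in\cB$ the product $f g'$ already satisfies $\lim_{|z|\to1^-}(1-|z|)^{\gamma+1}|f(z)g'(z)|=0$. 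I would choose, for each boundary point $\zeta\in\partial\D$, a function $g=g_\zeta\in\cB$ whose derivative $g_\zeta'$ stays bounded below on a Stolz-type approach region to $\zeta$ (for instance $g_\zeta'(z)=\frac{1}{1-\overline{\zeta}z}$, so $|g_\zeta'(z)|\geq\frac12(1-|z|)^{-1}$ near $\zeta$ on the radius), and then the little-oh decay of $(1-|z|)^{\gamma+1}|f(z)g_\zeta'(z)|$ forces the decay of $(1-|z|)^{\gamma}|f(z)|$ as $z\to\zeta$ radially. Since the $A^{-\gamma}$-bound already controls $f$ globally and $\zeta$ is arbitrary, a compactness argument over $\partial\D$ (covering the annulus $|z|$ near $1$ by finitely many such approach regions) upgrades the pointwise radial decay to the uniform limit $\lim_{|z|\to1^-}(1-|z|)^\gamma|f(z)|=0$, giving $f\in A_0^{-\gamma}$. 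Balancing the local lower bound on $|g_\zeta'|$ against the region where it is used is the delicate part; one must ensure the chosen $g_\zeta$ genuinely lie in $\cB$ and that their derivatives are bounded below on a neighbourhood large enough for the covering to close up.
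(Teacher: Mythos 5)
Your first two steps are sound: the inclusion $A_0^{-\gamma}\subseteq\cap_{g\in\cB}[V_g,A_0^{-\gamma}]$ via Proposition \ref{P1}(iii), and the deduction $f\in A^{-\gamma}$ (either by rerunning the $S_f$ closed-graph argument or simply by noting $\cap_{g\in\cB}[V_g,A_0^{-\gamma}]\subseteq\cap_{g\in\cB}[V_g,A^{-\gamma}]=A^{-\gamma}$ from Proposition \ref{P.Intr}), exactly as in the paper. The genuine gap is in the final upgrading step, precisely where you anticipated trouble, and it cannot be repaired in the form you propose. The region where $|g_\zeta'(z)|=\frac{1}{|1-\overline{\zeta}z|}\geq c\,(1-|z|)^{-1}$ is a nontangential (Stolz) approach region at $\zeta$: at radius $\rho$ its angular width is comparable to $1-\rho$. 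Consequently a \emph{finite} union of such regions meets the circle $\{|z|=\rho\}$ in a set of arc length $O(N(1-\rho))$, which tends to $0$ as $\rho\to1^-$; no finite family can ever cover an annulus $\{r<|z|<1\}$, so the covering argument "does not close up" for a structural geometric reason, not a technical one. A second, independent problem is uniformity: for each $\zeta$ the rate in $\lim_{|z|\to1^-}(1-|z|)^{\gamma+1}|f(z)g_\zeta'(z)|=0$ depends on $g_\zeta$, and the hypothesis gives no control of these rates uniformly in $\zeta\in\partial\D$; so even the nontangential decay you obtain is not uniform over boundary points. What your test functions genuinely yield is radial (indeed nontangential) decay of $(1-|z|)^\gamma|f(z)|$ at every boundary point together with the global bound $f\in A^{-\gamma}$, and that is strictly weaker than the uniform limit defining $A_0^{-\gamma}$.

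The paper circumvents this by a functional-analytic route rather than a pointwise one. From $f\in\cap_{g\in\cB}[V_g,A_0^{-\gamma}]$ and Proposition \ref{P.Description}(ii) it deduces that the multiplication operator $M_f$ maps \emph{all} of $A^{-1}$ into $A_0^{-(\gamma+1)}$ (given $h\in A^{-1}$, take $g$ with $g'=h$; then $g\in\cB$ and $fh=fg'\in A_0^{-(\gamma+1)}$). Since $(A_0^{-\alpha})^{**}=A^{-\alpha}$, the restriction $M_f\colon A_0^{-1}\to A_0^{-(\gamma+1)}$ has bitranspose $M_f\colon A^{-1}\to A^{-(\gamma+1)}$ with range in $A_0^{-(\gamma+1)}$, hence is weakly compact; by \cite[Theorem 5.1]{Cont-Diaz} it is then compact, and the compactness criterion \cite[Corollary 4.5]{Cont-Diaz} gives exactly $\lim_{|z|\to1^-}(1-|z|)^\gamma|f(z)|=0$. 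In effect, the quantifier "for all $g\in\cB$" is exploited through an operator-theoretic property of $M_f$ (weak compactness, detected by biduality) rather than through individually chosen test functions, and it is this which converts the pointwise information into the uniform little-oh condition. If you want to salvage a test-function proof, you would need something of this strength; your current construction cannot reach it.
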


\begin{proof} Proposition \ref{P1}(iii) and Proposition \ref{P.Description} imply that 	$A_0^{-\gamma}\subseteq \cap_{g\in \cB}[V_g,A^{-\gamma}_0]$.
	
	For the reverse inclusion fix $f\in\cap_{g\in \cB}[V_g,A^{-\gamma}_0]$. It follows from Proposition \ref{P.Intr}  (as $A^{-\gamma}_0\subseteq A^{-\gamma}$) that $f\in A^{-\gamma}$ and $f\in \cap_{g\in \cB}[V_g,A^{-\gamma}]$. Proposition 3.1 of \cite{Cont-Diaz}, for the choices $\varphi(z):=z$ and $\psi(z):=f(z)$, implies that the multiplication operator $M_f\colon A^{-1}\to A^{-(\gamma+1)}$, given by $h\mapsto fh$ for $h\in A^{-1}$, is continuous. Actually, $M_f(A^{-1})\subseteq A^{-(\gamma+1)}_0$. Indeed, given $h\in A^{-1}$, define $g\in H(\D)$ by $g(z):=\int_0^zh(\xi)\,d\xi$ for $z\in\D$. Then $g'=h$ and $g\in \cB$ because
	\[
	\sup_{z\in \D}(1-|z|)|g'(z)|=\|h\|_{-1}<\infty.
	\]
	Moreover, $f\in [V_g, A^{-\gamma}_0]$ implies, via Proposition \ref{P.Description}(ii), that $fg'\in A^{-(\gamma+1)}_0$, that is, $M_f(h)=fh=fg'\in A^{-(\gamma+1)}_0$. Accordingly, $M_f\colon A^{-1}\to A^{-(\gamma+1)}_0$ is continuous and hence, so is its restriction $M_f\colon A^{-1}_0\to A^{-(\gamma+1)}_0$ to the closed subspace $A^{-1}_0\subseteq A^{-1}$. Since the bidual Banach space $(A^{-\alpha}_0)^{**}=A^{-\alpha}$ for all $\alpha>0$, the bi-transpose operator of $M_f\colon A^{-1}_0\to A^{-(\gamma+1)}_0$ is precisely $M^{**}_f\colon A^{-\gamma}\to A^{-(\gamma+1)}$. But, it was shown above that $M_f^{**}((A^{-1}_0)^{**})=M_f(A^{-1})\subseteq A^{-(\gamma+1)}_0$ and hence, $M_f\colon A^{-1}_0\to A^{-(\gamma+1)}_0$ is a weakly compact operator; see \cite[\S 42, Proposition 2(2)]{24}, for example. According to \cite[Theorem 5.1]{Cont-Diaz} the operator $M_f\colon A^{-1}_0\to A^{-(\gamma+1)}_0$ is then also a compact operator. So, applying Corollary 4.5 of \cite{Cont-Diaz} with $\varphi(z):=z$ for $z\in\D$, we can conclude that
	\[
	\lim_{|z|\to 1^-}\frac{|f(z)|(1-|z|)^{\gamma+1}}{1-|z|}=0,
	\]
	that is, $\lim_{|z|\to 1^-}|f(z)|(1-|z|)^{\gamma}=0$. This means precisely that $f\in A^{-\gamma}_0$.
\end{proof}

We will require the following fact; see \cite[Lemma 11]{BDNS}.

\begin{lemma}\label{L} Let   $\delta\in (0,1)$ and $g\in A^{-\delta}$.
		 For each $\varepsilon\in (0,\min\{\delta,1-\delta\})$ there exists a function $\phi\in H(\D)$ such that $\phi g\in A^{-(\delta+\varepsilon)}\setminus A^{-\delta}$.
\end{lemma}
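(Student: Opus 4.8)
The plan is to produce the required $\phi$ as a singular analytic factor whose strength is matched, point by point, to the actual size of $g$. Throughout I use that $A^{-\delta}\subseteq A^{-(\delta+\varepsilon)}$ with $\|\cdot\|_{-(\delta+\varepsilon)}\le\|\cdot\|_{-\delta}$, so that only the two competing requirements ``$\phi g\in A^{-(\delta+\varepsilon)}$'' and ``$\phi g\notin A^{-\delta}$'' need to be balanced. I may assume $g\not\equiv 0$, and writing $g=z^m g_1$ with $g_1(0)\neq 0$ I may even assume $g(0)\neq 0$: since $|z^m|\to 1$ as $|z|\to 1^-$, multiplication by $z^m$ preserves membership in every $A^{-\alpha}$, so it suffices to treat $g_1$.

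First I would dispose of the transparent case $g\notin A^{-\delta}_0$. Then $\limsup_{|z|\to 1^-}(1-|z|)^\delta|g(z)|>0$, so there are $c>0$ and points $a_n$ with $|a_n|\to 1$ and $(1-|a_n|)^\delta|g(a_n)|\ge c$. Passing to a subsequence, $a_n\to\zeta$ for some $\zeta\in\partial\D$, whence $1-\overline{\zeta}a_n\to 0$. Setting $\phi(z):=(1-\overline{\zeta}z)^{-\varepsilon/2}$ and using $|1-\overline{\zeta}z|\ge 1-|z|$, one gets $\phi\in A^{-\varepsilon/2}$, hence $\phi g\in A^{-(\delta+\varepsilon/2)}\subseteq A^{-(\delta+\varepsilon)}$; while $(1-|a_n|)^\delta|\phi(a_n)g(a_n)|\ge c\,|1-\overline{\zeta}a_n|^{-\varepsilon/2}\to\infty$ shows $\phi g\notin A^{-\delta}$.

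For general $g$ (in particular $g\in A^{-\delta}_0$, where a single fixed power $(1-\overline{\zeta}z)^{-s}$ provably fails) I would replace the one factor by a sum of localized singular bumps built on the maximum modulus $M(r):=\max_{|z|=r}|g(z)|$, which is nondecreasing by the maximum principle and satisfies $|g(0)|\le M(r)\le\|g\|_{-\delta}(1-r)^{-\delta}$. I choose radii $r_n\uparrow 1$ and points $z_n$ with $|z_n|=r_n$, $|g(z_n)|=M(r_n)$, a subsequence with $z_n\to\zeta\in\partial\D$, and set $\phi:=\sum_n b_n$ with $b_n(z)=c_n\bigl((1-|z_n|^2)/(1-\overline{z_n}z)\bigr)^{N_n}$, large exponents $N_n$, and heights $c_n:=(1-r_n)^{-(\delta+\varepsilon/2)}/M(r_n)$. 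The heights are tuned so that the factor $M(r_n)$ cancels in both key estimates: using $|g(z_n)|=M(r_n)$ and $b_n(z_n)=c_n$, the peak satisfies $(1-r_n)^\delta|b_n(z_n)g(z_n)|\approx(1-r_n)^{-\varepsilon/2}\to\infty$ (forcing $\phi g\notin A^{-\delta}$), whereas $(1-r_n)^{\delta+\varepsilon}c_nM(r_n)=(1-r_n)^{\varepsilon/2}\le 1$ (the membership budget at the peaks).

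The main obstacle is the \emph{global} verification that $\phi g\in A^{-(\delta+\varepsilon)}$. Writing $\eta(z):=(1-|z|)^\delta|g(z)|\le\|g\|_{-\delta}$, membership amounts to $\sup_z(1-|z|)^\varepsilon\eta(z)\sum_n|b_n(z)|<\infty$. Bounding $\eta$ by its global supremum would demand $\phi\in A^{-\varepsilon}$, which the chosen $c_n$ need not give; instead one must exploit that $g$ is genuinely small near each peak. Since $|g(z)|\le M(|z|)$ and $M$ is continuous, on a small pseudohyperbolic ball about $z_n$ (radii $\approx r_n$) one has $\eta(z)\lesssim(1-r_n)^\delta M(r_n)=:\omega(r_n)$, and then, using $\omega(r_n)/M(r_n)=(1-r_n)^\delta$ and $|b_n(z)|\le c_n$, the $n$-th term contributes $(1-|z|)^\varepsilon\eta(z)|b_n(z)|\lesssim(1-r_n)^{\varepsilon/2}\le 1$ there. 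The delicate part is to make the radii $r_n$ lacunary and the $N_n$ large enough that (a) near $z_m$ the sum is dominated by its $m$-th term, so the peak lower bound survives, and (b) the off-peak tails $\sum_n|b_n(z)|$, now safely estimated through the crude bound $\eta\le\|g\|_{-\delta}$, still sum to a bounded multiple of $(1-|z|)^{-\varepsilon}$; controlling this interaction of mutually almost-disjoint bumps is the technical heart. Equivalently, the whole statement can be phrased by contradiction: if no such $\phi$ existed, the closed graph theorem would force $\|F\|_{-\delta}\le C\|F\|_{-(\delta+\varepsilon)}$ for every $F\in gH(\D)\cap A^{-(\delta+\varepsilon)}$, and the family $F=\phi g$ above violates this, the required estimates being identical.
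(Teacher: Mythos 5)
The paper contains no proof of this lemma to compare against: it is quoted verbatim from \cite[Lemma 11]{BDNS}, so your argument must stand entirely on its own. Your first case does: when $g\notin A^{-\delta}_0$, the single factor $\phi(z)=(1-\overline{\zeta}z)^{-\varepsilon/2}$ works exactly as you claim, and that part is complete and correct.

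The remaining case $g\in A^{-\delta}_0$ has a genuine gap, and it is not merely the unfinished bookkeeping you concede at the end; two estimates your plan rests on are wrong. First, the bound $\eta(z)\le C(1-r_n)^{\delta}M(r_n)$ on a fixed pseudohyperbolic ball about $z_n$ does \emph{not} follow from ``$|g|\le M(|\cdot|)$ and $M$ is continuous'': continuity gives a neighbourhood whose size you cannot control, whereas you need the estimate on a fixed pseudohyperbolic scale with a constant independent of $n$. It fails in general: for a superlacunary series such as $g(z)=\sum_k k^{-1}n_k^{\delta}z^{n_k}\in A^{-\delta}_0$ (e.g.\ $n_k=2^{2^k}$) one finds radii $r\to1^-$ along which $M\bigl(1-\tfrac{1-r}{2}\bigr)/M(r)\to\infty$, and nothing in your construction prevents the $r_n$ from landing on such radii. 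Secondly, and more fundamentally, your $b_n$ are not localized bumps: by the maximum principle no analytic function can peak at an interior point and decay toward the boundary, and indeed $\max_{|z|=\rho}|b_n(z)|=c_n\bigl((1-r_n^2)/(1-r_n\rho)\bigr)^{N_n}$ \emph{increases} with $\rho$, reaching $c_n(1+r_n)^{N_n}$ along the ray through $z_n$. Hence your requirement (b) --- off the peaks, $\sum_n|b_n(z)|\le C(1-|z|)^{-\varepsilon}$ with $\eta$ replaced by the crude bound $\|g\|_{-\delta}$ --- is unattainable: just beyond $z_n$ on that ray one already has $|b_n(z)|\ge c_n=(1-r_n)^{-\varepsilon/2}/\bigl((1-r_n)^{\delta}M(r_n)\bigr)$, which exceeds every multiple of $(1-|z|)^{-\varepsilon}$ as soon as $(1-r_n)^{\delta}M(r_n)$ is small compared with $(1-r_n)^{\varepsilon/2}$ --- precisely the hard case (it fails already for bounded $g$). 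Moreover, taking $N_n$ ``large'', as you prescribe, makes this worse, not better: within any triangle-inequality verification of membership such as yours, the outer-region contribution of $b_n$ forces a cap of the order $N_n\le\tfrac{\varepsilon}{2}\log_2\tfrac{1}{1-r_n}+O(1)$, a constraint your plan never confronts.

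The construction can be repaired, but only with ingredients absent from the proposal, so this is a missing idea rather than routine completion. In the outer region $|z|\ge r_n$ one must keep the smallness of $g$ in play instead of the crude bound; this is possible if the $r_n$ are chosen as forward record points of $\eta^{*}(r):=(1-r)^{\delta}M(r)$, i.e.\ $\eta^{*}(r_n)=\sup_{\rho\ge r_n}\eta^{*}(\rho)$, which exist arbitrarily close to $1$ exactly because $g\in A^{-\delta}_0$ makes $\eta^{*}$ tend to $0$. In the inner region $|z|\le r_n$ the monotonicity of $M$ gives $\eta^{*}(\rho)\le\bigl((1-\rho)/(1-r_n)\bigr)^{\delta}\eta^{*}(r_n)$, which is the correct replacement for your pseudohyperbolic-ball claim. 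Finally the exponents must satisfy a cap such as $2^{N_n}\le 2^{-n}(1-r_n)^{-\varepsilon/2}$ while still tending to infinity (arrange $r_n\to1$ fast enough). With these changes the peak and membership estimates do close up. The closed-graph reformulation in your last sentence buys nothing, as you yourself note, since it requires the very same family of functions.
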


\begin{prop}\label{PInclusione} Let $g\in \cB$ and $0<\gamma<\beta$.
	\begin{itemize}
		\item[\rm (i)] $[V_g, A^{-\gamma}]$ is a proper subspace of $[V_g, A^{-\beta}]$.
		\item[\rm (ii)] $[V_g, A_0^{-\gamma}]$ is a proper subspace of $[V_g, A_0^{-\beta}]$.
	\end{itemize}
\end{prop}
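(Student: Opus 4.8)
The strategy is to reduce both inclusions to the corresponding statements about the pointwise-multiplied functions $fg'$, using the characterization from Proposition~\ref{P.Description}. By that result, $[V_g,A^{-\gamma}]=\{f\in H(\D):fg'\in A^{-(\gamma+1)}\}$ and $[V_g,A^{-\beta}]=\{f\in H(\D):fg'\in A^{-(\beta+1)}\}$. Since $0<\gamma<\beta$ forces $\gamma+1<\beta+1$, the inclusion $A^{-(\gamma+1)}\subseteq A^{-(\beta+1)}$ holds, and this immediately yields $[V_g,A^{-\gamma}]\subseteq[V_g,A^{-\beta}]$ as sets. The analogous reasoning with $A_0^{-(\gamma+1)}\subseteq A_0^{-(\beta+1)}$ (recall $A^{-\alpha}\subseteq A_0^{-\beta}$ for $\alpha<\beta$, as noted in Section~3) gives the set-inclusion in part~(ii).

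\textbf{Properness.} The genuine content is showing the inclusions are \emph{proper}. First I would set $\delta:=\gamma+1$ and pick a small $\varepsilon\in(0,\min\{\delta,1-\delta\})$ with $\gamma+1+\varepsilon\le\beta+1$; this requires $\gamma+1<1$, which need \emph{not} hold, so a first obstacle is that Lemma~\ref{L} is stated only for $\delta\in(0,1)$. To circumvent this, I would exploit a scaling/shifting trick: produce a witness function $h\in A^{-(\beta+1)}\setminus A^{-(\gamma+1)}$ of the form $h(z)=(1-\overline w z)^{-(\gamma+1+\eta)}$ for a suitable $\eta>0$ with $\gamma+1+\eta\le\beta+1$ and $|w|=1$, which lies in $A^{-(\gamma+1+\eta)}\setminus A^{-(\gamma+1)}$ by a direct radial-limit computation (exactly as in Example~\ref{E1}). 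Then I would set $f:=h/g'$ where $g'$ is arranged to be bounded below, reducing to the case $|g'|>0$ on $\D$ with $1/g'\in H^\infty$; in that situation $fg'=h$ shows $f\in[V_g,A^{-\beta}]$ while $f\notin[V_g,A^{-\gamma}]$.

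\textbf{Handling general $g\in\cB$.} The remaining obstacle is that an arbitrary non-constant $g\in\cB$ may have zeros of $g'$ or may not satisfy $1/g'\in H^\infty$, so the clean division above is unavailable. Here I would instead invoke Lemma~\ref{L} after a reduction: writing $g'\in A^{-1}$ (as $g\in\cB$), the lemma produces, for the value $\delta$ and small $\varepsilon$, a function $\phi$ with $\phi g'\in A^{-(\delta+\varepsilon)}\setminus A^{-\delta}$, i.e.\ $\phi g'\in A^{-(\gamma+1+\varepsilon)}\setminus A^{-(\gamma+1)}$. Taking $f:=\phi$ and choosing $\varepsilon\le\beta-\gamma$ so that $\gamma+1+\varepsilon\le\beta+1$, Proposition~\ref{P.Description}(i) gives $f\in[V_g,A^{-\beta}]\setminus[V_g,A^{-\gamma}]$, establishing (i). The technical point to verify is that the constraint $\delta=\gamma+1\in(0,1)$ in Lemma~\ref{L} can be relaxed, which follows because $g'\in A^{-1}\subseteq A^{-\delta}$ for every $\delta\ge 1$ and the lemma's construction (a Blaschke-type or power-type perturbation) scales to any growth exponent; alternatively one applies the lemma to a normalized auxiliary function. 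For part~(ii) I would modify the witness by a vanishing factor such as $(1-z)^{s}$ for small $s>0$, exactly as in the $A_0^{-\gamma}$ argument of Example~\ref{E1}, to land the product in the ``little-oh'' space $A_0^{-(\beta+1)}$ while keeping it outside $A^{-(\gamma+1)}$, and then invoke Proposition~\ref{P.Description}(ii). The main obstacle throughout is the zero set and growth of $g'$; Lemma~\ref{L} is precisely the tool designed to absorb this difficulty for arbitrary $g\in\cB$.
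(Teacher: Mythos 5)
Your set-theoretic inclusions and your special-case construction (when $|g'|>0$ on $\D$ and $1/g'\in H^\infty$, take $f=h/g'$ with $h(z)=(1-\overline{w}z)^{-(\gamma+1+\eta)}$) are correct; that part mirrors Example \ref{E1} and works. The problem is the general case, which is exactly where the real difficulty lies, and there your argument has a genuine gap. You invoke Lemma \ref{L} with $\delta=\gamma+1$, but the lemma is stated only for $\delta\in(0,1)$, and the defect is not cosmetic: the admissible range of $\varepsilon$ is $(0,\min\{\delta,1-\delta\})$, which is \emph{empty} once $\delta\geq 1$, so there is no ``small $\varepsilon$'' to choose at all. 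Your proposed repair --- that the lemma's construction ``scales to any growth exponent'' --- is an unsupported assertion: Lemma \ref{L} is imported from \cite{BDNS} without proof, so there is no construction in the paper available to rescale, and whether an analogue holds for $\delta\geq 1$ (a question about zero sets of functions in growth spaces, since $\phi g'$ must vanish on $Z(g')$) is precisely what would need to be proved.

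The paper's own proof is engineered specifically to avoid this obstacle, and it takes a different, indirect route. It argues by contradiction: assuming $[V_g,A^{-\gamma}]=[V_g,A^{-\beta}]$, it first reduces to the case $\gamma<\beta<\gamma+1$, then introduces the auxiliary class $X:=\{G\in\cB:\ Z(g')\subseteq Z(G')\}$ and shows, for every $G\in X$, that the identity $V_g(G'h/g')=V_G h$ together with the equality assumption forces $V_G(A^{-\beta})\subseteq A^{-\gamma}$; by the closed graph theorem and the continuity criterion of \cite[Theorem 1]{BCHMP} this yields $G'\in A^{-\delta}$ with $\delta:=1-(\beta-\gamma)\in(0,1)$. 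Only at that point is Lemma \ref{L} applied --- to $G'$, with an exponent genuinely inside $(0,1)$ --- to manufacture a new function $H\in X$ (namely $H=\int_0^z\phi G'$) whose derivative violates the conclusion $H'\in A^{-\delta}$, giving the contradiction. In short: rather than constructing a witness $f\in[V_g,A^{-\beta}]\setminus[V_g,A^{-\gamma}]$ directly (which is what your plan needs and what forces the lemma outside its hypotheses), the paper converts the equality assumption into a uniform growth restriction at an exponent where the lemma is actually available. Your proof is incomplete without either proving the extension of Lemma \ref{L} to $\delta\geq 1$ or adopting some such workaround.
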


\begin{proof} (i) Since  $\gamma<\beta$, we have that  $A^{-\gamma}\subseteq A^{-\beta}$. The containment $[V_g, A^{-\gamma}]\subseteq [V_g, A^{-\beta}]$ is then clear  from the definition.
	
	Suppose that $[V_g, A^{-\gamma}]=[V_g, A^{-\beta}]$. Given  $h\in H(\D)$ denote by $Z(h)$ the discrete set of zeros of $h$ repeated with their multiplicity. Consider the subset $X:=\{G\in \cB:\ Z(g')\subseteq Z(G')\}$ of $\cB$. For each $G\in X$ note that $\frac{G'}{g'}\in H(\D)$.
	
	Now, if $\beta\geq (\gamma+1)$, then $[V_g,A^{-\gamma}]\subseteq [V_g, A^{-(\gamma+\frac{1}{2})}]\subseteq [V_g, A^{-\beta}]$ and hence, $[V_g, A^{-(\gamma+\frac{1}{2})}]= [V_g, A^{-\beta}]$.
	So, we can suppose that $\gamma<\beta<(\gamma+1)$ in which case $\delta:=1-(\beta-\gamma)\in (0,1)$.
	
	The claim is  that the assumption $[V_g,A^{-\gamma}]=[V_g,A^{-\beta}]$ implies that every $G\in X$ satisfies $G'\in A^{-\delta}$. To prove the claim we proceed as follows.
	
	Fix $G\in X$. Given $h\in A^{-\beta}$ then function $\psi:=\frac{G'h}{g'}\in H(\D)$ satisfies
	\begin{equation}\label{eq.Vi}
		V_g(\psi)(z)=V_g\left(\frac{G'h}{g'}\right)(z)=\int_0^zg'(\xi)\frac{G'(\xi)}{g'(\xi)}h(\xi)\,d\xi=V_G(h)(z),\quad z\in\D.
	\end{equation}
	Since $G\in \cB$,  Proposition \ref{P.Con} implies that $V_Gh\in A^{-\beta}$. Accordingly, via \eqref{eq.Vi}, it follows that $V_g\psi\in A^{-\beta}$, that is, $\psi\in [V_g, A^{-\beta}]=[V_g, A^{-\gamma}]$, and hence, $V_g\psi\in A^{-\gamma}$. So, $V_Gh\in A^{-\gamma}$.
	
	Since $h\in A^{-\beta}$ is arbitrary, we can conclude that $V_G(h)\in A^{-\gamma}$ for all $h\in A^{-\beta}$. Therefore, the continuous linear operator $V_G\colon H(\D)\to H(\D)$ satisfies $V_G(A^{-\beta})\subseteq A^{-\gamma}$. By the closed graph theorem it follows that  $V_G\colon A^{-\beta}\to A^{-\gamma}$ is also continuous. By \cite[Theorem 1]{BCHMP} this is equivalent to the fact that $$\sup_{z\in\D}\frac{(1-|z|)^\gamma}{(1-|z|)^\beta}(1-|z|)|G'(z)|<\infty.$$
	 Accordingly, $G'\in A^{-\delta}$. The claim is thereby  proved.
	
	Now, fix $G\in X$ such that $G'\not=0$. By the above claim  $G'\in A^{-\delta}$ with $\delta\in (0,1)$. Hence, by Lemma \ref{L}(i), for any $\varepsilon\in (0,\min\{\delta,1-\delta\})$ there exists $\phi\in H(\D)$ such that $\phi G'\in A^{-(\delta+\varepsilon)}\setminus A^{-\delta}$. Therefore, the function
	\[
	H(z):=\int_0^z\phi(\xi)G'(\xi)\,d\xi,\quad z\in \D,
	\]
	belongs to $H(\D)$, has derivative  $H'=\phi G'$ and $H\in X$ (as $Z(g')\subseteq Z(G')\subseteq Z(H')$). But,  $H'\not\in A^{-\delta}$. A contradiction.
	
	(ii) Arguing as in the proof of part (i) we can suppose that $(\beta-\gamma)<1$ and prove that the assumption $[V_g,A_0^{-\gamma}]=[V_g,A_0^{-\beta}]$ implies that every $G\in X$ satisfies $G'\in A^{-\delta}$, where $\delta:=1-(\beta-\delta)\in (0,1)$. To this effect fix  $G\in X$. For every $h\in A_0^{-\beta}$ the identity
	\[
	V_g\psi=V_Gh\in A^{-\gamma}_0,
	\]
	where $\psi=\frac{G'h}{g'}$ (cf. \eqref{eq.Vi}) implies that $V_g\psi\in A^{-\beta}_0$ and hence, $V_g\psi\in A^{-\gamma}_0$.  Therefore, the continuous linear operator  $V_G\colon H(\D)\to H(\D)$ satisfies $V_G(A^{-\beta}_0)\subseteq A^{-\gamma}_0$. According to the closed graph theorem  $V_G\colon A^{-\beta}_0\to A^{-\gamma}_0$ is actually continuous. By \cite[Theorem 1]{BCHMP} this is equivalent to the fact that $\sup_{z\in\D}\frac{(1-|z|)^\gamma}{(1-|z|)^\beta}(1-|z|)|G'(z)|<\infty$. Hence, $G'\in A^{-\delta}$.
	
	At this point, the contradiction follows by proceeding as in the latter part of the proof of part (i).
\end{proof}

Our final result in this section shows that the Banach space $[V_g, A^{-\gamma}_0]$ is separable for suitable functions $g\in \cB$.

\begin{prop}\label{P.DensePol} Let $\gamma>0$ and  $g\in \cB$ satisfy both $|g'|>0$ on $\D$ and $\frac{1}{g'}\in H^\infty$. Then  the space $\cP$ of all polynomials on $\D$ is dense in $[V_g, A^{-\gamma}_0]$.
\end{prop}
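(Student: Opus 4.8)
The plan is to trade the Volterra picture for the multiplier picture of Remark \ref{R1}(ii) and thereby reduce the statement to an approximation result for \emph{bounded} analytic functions, where a dilation-plus-truncation argument runs smoothly. By Remark \ref{R1}(ii) together with Proposition \ref{P.Description}(ii), on $[V_g,A_0^{-\gamma}]$ the norm $\|\cdot\|_{[V_g,A_0^{-\gamma}]}$ is equivalent to $f\mapsto\|fg'\|_{-(\gamma+1)}$, and $[V_g,A_0^{-\gamma}]=E_0:=\{f\in H(\D):fg'\in A_0^{-(\gamma+1)}\}$ as sets. So it suffices to prove that $\cP$ is dense in $E_0$ for the norm $\|fg'\|_{-(\gamma+1)}$. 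Since $|g'|>0$ on $\D$, the multiplication map $M_{g'}\colon E_0\to A_0^{-(\gamma+1)}$, $f\mapsto fg'$, is a surjective isometry (surjectivity: for $F\in A_0^{-(\gamma+1)}$ the function $F/g'\in H(\D)$ lies in $E_0$ and maps to $F$). As $M_{g'}$ carries $\cP$ onto $\cP g':=\{pg':p\in\cP\}$, which does sit inside $A_0^{-(\gamma+1)}$ because $(1-|z|)^{\gamma+1}|z^ng'(z)|\le(1-|z|)^\gamma\|g'\|_{-1}\to0$, the goal becomes: \emph{$\cP g'$ is dense in $A_0^{-(\gamma+1)}$.}

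Next I would invoke that $\cP$ itself is dense in $A_0^{-(\gamma+1)}$ (the space $A_0^{-\alpha}$ is the closure of the polynomials in $A^{-\alpha}$, \cite[Lemma 3]{Sh-Wi}). Hence it is enough to show that every polynomial $q$ lies in the $A^{-(\gamma+1)}$-closure of $\cP g'$: that closure is then a closed subspace containing the dense set $\cP$, so it must equal $A_0^{-(\gamma+1)}$, and density transfers back to $E_0$ via the isometry $M_{g'}$. Fix $q\in\cP$ and set $h:=q/g'$. Because $q$ is bounded on $\D$ and $\frac{1}{g'}\in H^\infty$, we have $h\in H^\infty$, and the remaining task is the key lemma: \emph{every bounded analytic function $h$ is approximable by polynomials in the norm $\|\,\cdot\,g'\|_{-(\gamma+1)}$.}

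For this lemma I would dilate. Put $h_s(z):=h(sz)$ for $0<s<1$. Since $g\in\cB$ means $g'\in A^{-1}$, we have $(1-|z|)^{\gamma+1}|g'(z)|\le\|g'\|_{-1}(1-|z|)^\gamma$, so
\[
\|(h-h_s)g'\|_{-(\gamma+1)}\le\|g'\|_{-1}\,\sup_{z\in\D}(1-|z|)^\gamma|h(z)-h(sz)|.
\]
The crucial use of boundedness is here: for $|z|\ge\rho$ the factor $(1-|z|)^\gamma\le(1-\rho)^\gamma$ makes $(1-|z|)^\gamma|h(z)-h(sz)|\le2\|h\|_\infty(1-\rho)^\gamma$ small uniformly in $s$ once $\rho$ is near $1$, while on $\{|z|\le\rho\}$ uniform continuity of $h$ gives $\sup_{|z|\le\rho}|h(z)-h(sz)|\to0$ as $s\to1^-$; together these yield $\|(h-h_s)g'\|_{-(\gamma+1)}\to0$. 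Finally, for each fixed $s$ the function $h_s$ is analytic on $\overline{\D}$, so its Taylor partial sums $s_N(h_s)\to h_s$ uniformly on $\overline{\D}$, whence
\[
\|(h_s-s_N(h_s))g'\|_{-(\gamma+1)}\le\|g'\|_{-1}\,\sup_{\overline{\D}}|h_s-s_N(h_s)|\to0.
\]
Combining the two displays produces polynomials approximating $h$, which proves the lemma and hence the proposition.

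I expect the main obstacle to be precisely the non-boundedness of a general $f\in[V_g,A_0^{-\gamma}]$: such an $f$ need not even lie in $A^{-\gamma}$ (cf. Example \ref{E1}), so the direct dilation $f_s(z)=f(sz)$ fails to converge in $\|\,\cdot\,g'\|_{-(\gamma+1)}$ — multiplying by the unbounded factor $g'$ defeats the naive boundary estimates. The device that circumvents this is the two-step reduction above (polynomial density in $A_0^{-(\gamma+1)}$, followed by the isometry $M_{g'}$), which replaces the arbitrary $f$ by the bounded function $q/g'$; it is only for bounded functions that the vanishing factor $(1-|z|)^\gamma$ with $\gamma>0$ absorbs the dilation error near $\partial\D$.
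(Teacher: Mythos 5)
Your proposal is correct. It shares the paper's opening move --- via Remark \ref{R1}(ii) and Proposition \ref{P.Description}(ii) everything is transferred to the equivalent norm $f\mapsto\|fg'\|_{-(\gamma+1)}$, and the density of $\cP$ in $A_0^{-(\gamma+1)}$ is invoked --- but your second half is genuinely different. The paper works directly on a fixed $f\in[V_g,A_0^{-\gamma}]$: it chooses $p_n\in\cP$ with $\|p_n-fg'\|_{-(\gamma+1)}\to 0$, observes that $p_n/g'\in A_0^{-\gamma}$ (this is where $\tfrac{1}{g'}\in H^\infty$ enters), and then cites polynomial density a \emph{second} time, now in $A_0^{-\gamma}$, to get $q_n\in\cP$ with $\|p_n/g'-q_n\|_{-\gamma}<\tfrac1n$; the estimate $(1-|z|)^{\gamma+1}|g'(z)|\le\|g\|_{\cB}(1-|z|)^{\gamma}$ then combines the two approximations into $\|(q_n-f)g'\|_{-(\gamma+1)}\to 0$. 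You instead encode the division by $g'$ in the surjective isometry $M_{g'}\colon E_0\to A_0^{-(\gamma+1)}$, reduce the problem to density of $\cP g'$ in $A_0^{-(\gamma+1)}$, and replace the second citation of density by a self-contained dilation-plus-Taylor-truncation lemma for bounded functions, applied to $q/g'\in H^\infty$ (a stronger containment than the paper's $p_n/g'\in A_0^{-\gamma}$, since $H^\infty\subseteq A_0^{-\gamma}$). The paper's route is shorter, since it recycles the known density result at both exponents $\gamma+1$ and $\gamma$; yours is more self-contained and isolates where each hypothesis is used: your key lemma needs only $g\in\cB$, while $|g'|>0$ is used solely for surjectivity of $M_{g'}$ and $\tfrac{1}{g'}\in H^\infty$ solely to see that $q/g'$ is bounded. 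Your closing diagnosis is also accurate: naive dilation of $f$ itself fails because $f_sg'\neq(fg')_s$, and the error term $f_s\,(g'-(g')_s)$ cannot be controlled when $f$ is unbounded, which is exactly why some two-step reduction is needed.
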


\begin{proof}
	Remark \ref{R1}(ii) implies that
	\[
	[V_g, A^{-\gamma}_0]=\{f\in H(\D):\ fg'\in A^{-(\gamma+1)}_0\}
	\]
	and that $f\mapsto\|fg'\|_{-(\gamma+1)}$ is an equivalent norm in $[V_g, A^{-\gamma}_0]$. So, for a fixed $f\in [V_g, A^{-\gamma}_0]$, the function $fg'\in A^{-(\gamma+1)}_0$ and hence, there exists a sequence $(p_n)_{n\in\N}\subseteq \cP$  such that $\|p_n-fg'\|_{-(\gamma+1)}\to 0$ as $n\to\infty$; see the discussion after \eqref{eq.supnorm}. We now observe that $\frac{p_n}{g'}\in A^{-\gamma}_0$ for each $n\in\N$. Indeed, for $n\in\N$ fixed, we have
	\[
	(1-|z|)^\gamma |p_n(z)|\frac{1}{|g'(z)|}\leq (1-|z|)^\gamma |p_n(z)| \left\|\frac{1}{g'}\right\|_{H^\infty},\quad z\in\D.
	\]
	It follows that
	\[
	\lim_{|z|\to 1^-}(1-|z|)^\gamma |p_n(z)|\frac{1}{|g'(z)|}=0,
	\]
	that is, $\frac{p_n}{g'}\in A^{-\gamma}_0$.
	Since the space $\cP$ is dense in $A^{-\gamma}_0$ and $\left(\frac{p_n}{g'}\right)_{n\in\N}\subseteq A^{-\gamma}_0$, for every $n\in\N$ there exists $q_n\in \cP$ such that
	\[
	\left\|\frac{p_n}{g'}-q_n\right\|_{-\gamma}<\frac{1}{n}.
	\]
	Recall that $\sup_{z\in\D}(1-|z|)|g'(z)|\leq \|g\|_{\cB}$. So, for each $n\in\N$, it follows that
	\begin{align*}
		&\sup_{z\in\D}(1-|z|)^{\gamma+1}|q_n(z)-f(z)|\cdot |g'(z)|\\
		&\leq \sup_{z\in\D}(1-|z|)^{\gamma+1}\left|q_n(z)-\frac{p_n(z)}{g'(z)}\right||g'(z)|+ \sup_{z\in\D}(1-|z|)^{\gamma+1}\left|\frac{p_n(z)}{g'(z)}-f(z)\right||g'(z)|\\
		&\leq \|g\|_{\cB}\left\|\frac{p_n}{g'}-q_n\right\|_{-\gamma}+\|p_n-fg'\|_{-(\gamma+1)}\leq \frac{1}{n}\|g\|_{\cB}+\|p_n-fg'\|_{-(\gamma+1)}.
	\end{align*}
	This implies that
	\[
	\sup_{z\in\D}(1-|z|)^{\gamma+1}|q_n(z)-f(z)||g'(z)|\to 0
	\]
	as $n\to\infty$, that is, $q_n\to f$ in $[V_g, A^{-\gamma}_0]$. This completes the proof.
\end{proof}

\section{Multipliers of optimal domain spaces}

Let $X, Y$ be Banach spaces of analytic functions on $\D$. A function $h\in H(\D)$ is called a \textit{multiplier} for $X, Y$ if the multiplication operator $M_h\in \cL(H(\D))$ given by $f\mapsto hf$, for $f\in H(\D)$, satisfies $M_h(X)\subseteq Y$. We also say that  $M_h\colon X\to Y$ exists as a linear map. Since both $X\subseteq H(\D)$ and $Y\subseteq H(\D)$ continuously, a closed graph argument shows that necessarily $M_h\in \cL(X,Y)$. The space of all multipliers for $X, Y$ is denoted by $\cM(X,Y)$ or, if $X=Y$, simply by $\cM(X)$. 
The aim of this section is to identify the  multiplier spaces of the optimal domain spaces $[V_g,A^{-\gamma}]$ and $[V_g,A^{-\gamma}_0]$ for $g\in\cB$.

For each $\gamma>0$ the following operators are known to be  continuous.
\begin{itemize}
	\item[\rm (i)]The differentiation operator $D\colon A^{-\gamma}\to A^{-(\gamma+1)}$, defined by $h\mapsto Dh:=h'$; see Section 3.
	\item[\rm (ii)] The integration operator $J\colon A^{-(\gamma+1)}\to A^{-\gamma}$, defined by $h\mapsto (Jh)(z):=\int_0^zh(\xi)\,d\xi$, for $z\in\D$, which satisfies $\|J\|_{A^{-(\gamma+1)}\to A^{-\gamma}}\leq \frac{1}{\gamma}$; see \cite[pp. 236-237 \& Proposition 2.2(a)]{HL}, \cite[Proposition 3.12, Corollary 3.2]{AT}.
	\item[\rm (iii)] The multiplication operator $M_h\colon A^{-\gamma}\to A^{-\gamma}$, defined by $f\mapsto hf$, is continuous if and only if $f\in H^\infty$. In this case, $\|M_h\|_{A^{-\gamma}\to A^{-\gamma}}=\|h\|_\infty$, \cite[Proposition 2.1]{BDL}.
\end{itemize}

\begin{remark}\label{R.NormaInt} \rm The fact in (ii) above  that $\|J\|_{A^{-(\gamma+1)}\to A^{-\gamma}}\leq \frac{1}{\gamma}$ can be established as follows. Fix $f\in A^{-(\gamma+1)}$ and observe, for each $z\in \D$, that
\begin{align*}
	(1-|z|)^\gamma\left|\int_0^zf(\xi)\,d\xi\right|&=	(1-|z|)^\gamma\left|\int_0^1zf(tz)\,dt\right|\leq 	(1-|z|)^\gamma\int_0^1|z|\cdot|f(tz)|\,dt\\
	&=(1-|z|)^\gamma\int_0^1 \frac{|z|}{(1-t|z|)^{\gamma+1}}(1-t|z|)^{\gamma+1}|f(tz)|\,dt\\
	&\leq \|f\|_{-(\gamma+1)}(1-|z|)^\gamma \left[\frac{1}{\gamma(1-t|z|)^\gamma}\right]_{t=0}^{t=1}\\
	&=\|f\|_{-(\gamma+1)}(1-|z|)^\gamma\left(\frac{1}{\gamma(1-|z|)^\gamma}-\frac{1}{\gamma}\right)\\
	& =\|f\|_{-(\gamma+1)}\left(\frac{1}{\gamma}-\frac{(1-|z|)^\gamma}{\gamma}\right)\leq \frac{1}{\gamma}\|f\|_{-(\gamma+1)}.
\end{align*}
\end{remark}

The statements (i)--(iii) above also hold when these operators act on $A^{-\gamma}_0$.

The next result characterizes the action of multiplication operators on optimal domain spaces.

\begin{prop}\label{P.Mult} Let $g\in \cB$ and $h\in H(\D)$. For each $\gamma>0$ the following three properties are equivalent.
	\begin{itemize}
		\item[\rm (i)] The multiplication operator  $M_h\colon [V_g,A^{-\gamma}]\to [V_g,A^{-\gamma}]$ exists as a linear map and is continuous.
		\item[\rm (ii)] The multiplication operator  $M_h\colon [V_g,A^{-\gamma}_0]\to [V_g,A^{-\gamma}_0]$ exists as a linear map and is continuous.
		\item[\rm (iii)] The function $h\in H^\infty$.
	\end{itemize}
 In this case,  the norms $\|M_h\|_{[V_g, A^{-\gamma}]\to [V_g,A^{-\gamma}]}$ and $\|h\|_\infty$ are equivalent.
	\end{prop}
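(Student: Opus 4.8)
The plan is to establish the cycle of implications (iii)$\Rightarrow$(i), (i)$\Rightarrow$(ii), and (ii)$\Rightarrow$(iii), and then deduce the norm equivalence from the estimates obtained along the way. The conceptual heart of the matter is Proposition \ref{P.Description}, which identifies membership in the optimal domain space with the weighted condition $fg'\in A^{-(\gamma+1)}$ (respectively $A^{-(\gamma+1)}_0$). This reduces the whole question about $M_h$ acting on $[V_g,A^{-\gamma}]$ to a question about $M_h$ acting on the spaces $A^{-(\gamma+1)}$ and $A^{-(\gamma+1)}_0$, where the multiplier theory is already understood via fact (iii) in the list preceding the statement: $M_h\colon A^{-(\gamma+1)}\to A^{-(\gamma+1)}$ is continuous if and only if $h\in H^\infty$, with operator norm $\|h\|_\infty$.

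For (iii)$\Rightarrow$(i), I would argue as follows. Suppose $h\in H^\infty$ and let $f\in[V_g,A^{-\gamma}]$. By Proposition \ref{P.Description}(i) we have $fg'\in A^{-(\gamma+1)}$, and then $(hf)g'=h(fg')\in A^{-(\gamma+1)}$ because $M_h$ maps $A^{-(\gamma+1)}$ into itself. Applying Proposition \ref{P.Description}(i) in the reverse direction shows $hf\in[V_g,A^{-\gamma}]$, so $M_h$ exists as a linear map on the optimal domain space. For continuity, I would combine the inequality of Proposition \ref{P.Description}(i) with the fact $\|M_h\|_{A^{-(\gamma+1)}\to A^{-(\gamma+1)}}=\|h\|_\infty$ to obtain an estimate of the form
\[
\|hf\|_{[V_g,A^{-\gamma}]}=\|V_g(hf)\|_{-\gamma}\leq c\,\|(hf)g'\|_{-(\gamma+1)}\leq c\,\|h\|_\infty\,\|fg'\|_{-(\gamma+1)}\leq c\,C\,\|h\|_\infty\,\|f\|_{[V_g,A^{-\gamma}]},
\]
where the first inequality uses the continuity of the integration operator $J\colon A^{-(\gamma+1)}\to A^{-\gamma}$ from fact (ii). The identical argument with $A^{-(\gamma+1)}_0$ in place of $A^{-(\gamma+1)}$ gives (iii)$\Rightarrow$(ii), using Proposition \ref{P.Description}(ii) and the fact that $M_h$ preserves $A^{-(\gamma+1)}_0$ when $h\in H^\infty$.

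The genuinely substantive direction is the converse, and I would route it through (ii)$\Rightarrow$(iii) (the implication (i)$\Rightarrow$(ii) being immediate since $[V_g,A^{-\gamma}_0]$ is a closed subspace of $[V_g,A^{-\gamma}]$ invariant under $M_h$, by Proposition \ref{P1}(v)). Assume $M_h\colon[V_g,A^{-\gamma}_0]\to[V_g,A^{-\gamma}_0]$ is continuous and aim to produce a bound forcing $h$ to be bounded. The natural strategy is to test $M_h$ against a concrete family of functions in $[V_g,A^{-\gamma}_0]$ whose norms we can control while extracting pointwise information about $h$. I expect the main obstacle to lie precisely here: unlike the classical $A^{-\gamma}_0$ setting, the presence of the factor $g'$ (which may vanish and is only controlled by the Bloch condition) complicates the construction of suitable test functions and the estimation of their optimal-domain norms. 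The cleanest route is likely to reduce to the known multiplier theory on $A^{-(\gamma+1)}_0$: using Proposition \ref{P.Description}(ii), the continuity of $M_h$ on $[V_g,A^{-\gamma}_0]$ should translate into the continuity of $M_h$ on $A^{-(\gamma+1)}_0$ (by transporting functions via division and multiplication by $g'$, at least locally around points where $g'$ is large), whence fact (iii) yields $h\in H^\infty$ together with $\|h\|_\infty\leq c'\|M_h\|_{[V_g,A^{-\gamma}_0]\to[V_g,A^{-\gamma}_0]}$. Combining this lower bound with the upper bound from the (iii)$\Rightarrow$(i) estimate then delivers the asserted equivalence of $\|M_h\|_{[V_g,A^{-\gamma}]\to[V_g,A^{-\gamma}]}$ and $\|h\|_\infty$.
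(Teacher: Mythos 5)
Your forward direction (iii)$\Rightarrow$(i) and (iii)$\Rightarrow$(ii) is correct and is essentially the paper's own argument: writing $V_g(hf)=J\bigl(h\,(V_gf)'\bigr)$ and chaining the continuity of $D\colon A^{-\gamma}\to A^{-(\gamma+1)}$, of $M_h$ on $A^{-(\gamma+1)}$, and of $J\colon A^{-(\gamma+1)}\to A^{-\gamma}$ gives $\|M_h\|_{[V_g,A^{-\gamma}]\to[V_g,A^{-\gamma}]}\leq \frac{1}{\gamma}\|D\|_{A^{-\gamma}\to A^{-(\gamma+1)}}\|h\|_\infty$. The converse, however, has two genuine gaps. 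First, your claim that (i)$\Rightarrow$(ii) is ``immediate'' from Proposition \ref{P1}(v) is unjustified: that proposition only says that $[V_g,A_0^{-\gamma}]$ is a \emph{closed} subspace of $[V_g,A^{-\gamma}]$; it says nothing about invariance under $M_h$. Given $f$ with $fg'\in A_0^{-(\gamma+1)}$, hypothesis (i) only yields $hfg'\in A^{-(\gamma+1)}$, and you cannot upgrade this to the little-oh decay required for $[V_g,A_0^{-\gamma}]$ without already knowing $h\in H^\infty$ --- which is exactly what is being proved. Second, (ii)$\Rightarrow$(iii) is left as a strategy rather than a proof, and the route you sketch (transporting continuity to $A_0^{-(\gamma+1)}$ ``via division and multiplication by $g'$, at least locally around points where $g'$ is large'') would fail: the derivative of a Bloch function can vanish on an infinite set accumulating at $\partial\D$, and the Bloch condition gives no lower bound on $|g'|$, so there is no global division by $g'$, while a bound on $|h(z)|$ is needed at \emph{every} $z\in\D$. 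With the flawed links removed, your implications only establish (ii)$\Leftrightarrow$(iii) and (iii)$\Rightarrow$(i), so property (i) never feeds back into the cycle.

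The idea you are missing --- which makes the converse a three-line argument in the paper --- is duality against point evaluations. By Corollary \ref{CoroE}, both $[V_g,A^{-\gamma}]$ and $[V_g,A_0^{-\gamma}]$ are Banach spaces of analytic functions on $\D$, so each $\delta_z$ is a bounded, nonzero functional on them (nonzero because the constants lie in $A_0^{-\gamma}\subseteq A^{-\gamma}\subseteq[V_g,A^{-\gamma}]$). If $M_h$ is continuous on $[V_g,A^{-\gamma}]$, then for every $f$ in the unit ball,
\[
|h(z)|\,|f(z)|=|\langle M_hf,\delta_z\rangle|\leq \|M_h\|_{[V_g,A^{-\gamma}]\to[V_g,A^{-\gamma}]}\,\|\delta_z\|_{[V_g,A^{-\gamma}]^*};
\]
taking the supremum over the unit ball turns the left-hand side into $|h(z)|\,\|\delta_z\|_{[V_g,A^{-\gamma}]^*}$, and cancelling the strictly positive norm of $\delta_z$ gives $|h(z)|\leq\|M_h\|_{[V_g,A^{-\gamma}]\to[V_g,A^{-\gamma}]}$ for every $z\in\D$, i.e. $h\in H^\infty$ with $\|h\|_\infty\leq\|M_h\|_{[V_g,A^{-\gamma}]\to[V_g,A^{-\gamma}]}$. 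The identical argument applies verbatim to $[V_g,A_0^{-\gamma}]$, so both (i)$\Rightarrow$(iii) and (ii)$\Rightarrow$(iii) follow at once, with no test functions and no manipulation of $g'$; combined with your forward estimate this also yields the asserted equivalence of norms.
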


\begin{proof} (iii)$\Rightarrow$(i) Let $f\in [V_g,A^{-\gamma}]$ be fixed. Then $V_gf\in A^{-\gamma}$. The claim is that $V_g(hf)\in A^{-\gamma}$. To establish this claim, observe that $V_gf\in A^{-\gamma}$ implies that $(V_gf)'=g'f\in A^{-(\gamma+1)}$ (as the operator $D$ maps  $A^{-\gamma}$ into $A^{-(\gamma+1)}$). Since $h\in H^\infty$, the operator $M_h\in \cL(A^{-\gamma})$ and so  $hg'f\in A^{-(\gamma+1)}$.  Hence, $V_g(hf)=J(hg'f)\in A^{-\gamma}$, as the operator $J$ maps $A^{-(\gamma+1)}$ into $A^{-\gamma}$. The claim is thereby established.
	
From the previous paragraph we can conclude that $M_hf\in [V_g,A^{-\gamma}]$. Moreover,
	\begin{align*}
		&\|M_hf\|_{[V_g,A^{-\gamma}]}=\|V_g(hf)\|_{-\gamma}=\|(JM_hDV_g)(f)\|_{-\gamma}\\
		&\leq \|J\|_{A^{-(\gamma+1)}\to A^{-\gamma}}\|M_h\|_{A^{-(\gamma+1)}\to A^{-(\gamma+1)}}\|D\|_{A^{-\gamma}\to A^{-(\gamma+1)}}\|V_gf\|_{-\gamma}\\
		&\leq \frac{1}{\gamma}\|D\|_{A^{-\gamma}\to A^{-(\gamma+1)}}\|h\|_\infty\|f\|_{[V_g,A^{-\gamma}]}.
		\end{align*}
	This means that $M_h\colon [V_g,A^{-\gamma}]\to [V_g,A^{-\gamma}]$ is continuous with $\|M_h\|_{[V_g, A^{-\gamma}]\to [V_g,A^{-\gamma}]}\leq \frac{1}{\gamma} \|D\|_{A^{-\gamma}\to A^{-(\gamma+1)}}\|h\|_\infty$.
	
	(i)$\Rightarrow$(iii) Let $f\in [V_g,A^{-\gamma}]$ satisfy $\|f\|_{[V_g,A^{-\gamma}]}=1$. For fixed  $z\in\D$ observe, as $[V_g,A^{-\gamma}]$ is a Banach space of analytic functions on $\D$ (cf. Corollary \ref{CoroE}), that
	\[
	|h(z)f(z)|=|\langle M_h f,\delta_z\rangle|\leq \|M_h\|_{[V_g,A^{-\gamma}]\to [V_g,A^{-\gamma}]}\|\delta_z\|_{[V_g,A^{-\gamma}]^*}.
	\]
	Taking the supremum over all such  functions $f$ yields
	\[
	|h(z)|\cdot \|\delta_z\|_{[V_g,A^{-\gamma}]^*}\leq \|M_h\|_{[V_g,A^{-\gamma}]\to [V_g,A^{-\gamma}]}\|\delta_z\|_{[V_g,A^{-\gamma}]^*}.
	\]
	Since $z\in\D$ is arbitrary, it follows that $h\in H^\infty$ and $\|h\|_\infty\leq \|M_h\|_{[V_g,A^{-\gamma}]\to [V_g,A^{-\gamma}]}$.
	
	(iii)$\Leftrightarrow$(ii) Since each of the linear maps $D\colon A_0^{-\gamma}\to A_0^{-(\gamma+1)}$, $J\colon A_0^{-(\gamma+1)}\to A_0^{-\gamma}$ and $M_h\colon A_0^{-\gamma}\to A_0^{-\gamma}$ is defined and continuous,  the result follows by arguing as in the proof of (iii)$\Leftrightarrow$(i).
\end{proof}

As an immediate consequence we have the following result.

\begin{corollary}\label{coroG} For each $g\in \cB$ and each $\gamma>0$ the multiplier spaces of the optimal domain spaces $[V_g, A^{-\gamma}]$ and $[V_g, A^{-\gamma}_0]$ are given by
	\[
	\cM([V_g, A^{-\gamma}])=\cM([V_g, A_0^{-\gamma}])=H^\infty.
	\]
\end{corollary}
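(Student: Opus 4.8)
The plan is to read the corollary off directly from the equivalences established in Proposition \ref{P.Mult}. By the definition given at the start of this section, the multiplier space $\cM([V_g, A^{-\gamma}])$ consists precisely of those $h \in H(\D)$ for which $M_h([V_g, A^{-\gamma}]) \subseteq [V_g, A^{-\gamma}]$, that is, for which the linear map $M_h \colon [V_g, A^{-\gamma}] \to [V_g, A^{-\gamma}]$ exists. This is exactly condition (i) of Proposition \ref{P.Mult}, up to the question of continuity.

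Next I would dispose of that continuity issue. Corollary \ref{CoroE} guarantees that $[V_g, A^{-\gamma}]$ is a Banach space of analytic functions on $\D$, and the closed graph argument recorded at the beginning of this section shows that any multiplier $M_h$ between such spaces is automatically continuous. Hence the statements ``$h \in \cM([V_g, A^{-\gamma}])$'' and ``condition (i) of Proposition \ref{P.Mult} holds'' coincide. Invoking the equivalence (i) $\Leftrightarrow$ (iii) of that proposition then gives $h \in \cM([V_g, A^{-\gamma}])$ if and only if $h \in H^\infty$, i.e. $\cM([V_g, A^{-\gamma}]) = H^\infty$.

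Finally, the identical reasoning applied to condition (ii) of Proposition \ref{P.Mult} — again using that $[V_g, A^{-\gamma}_0]$ is a Banach space of analytic functions on $\D$ by Corollary \ref{CoroE} — yields $\cM([V_g, A^{-\gamma}_0]) = H^\infty$. I do not expect any real obstacle here: all of the analytic content already resides in Proposition \ref{P.Mult}, whose proof combines the continuity of $D \colon A^{-\gamma} \to A^{-(\gamma+1)}$, of $J \colon A^{-(\gamma+1)} \to A^{-\gamma}$, and of $M_h \colon A^{-\gamma} \to A^{-\gamma}$ for $h \in H^\infty$ (for the forward implication) together with the evaluation-functional estimate (for the reverse implication). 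The corollary is merely a restatement of that equivalence in the language of multiplier spaces.
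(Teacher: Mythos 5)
Your proposal is correct and follows exactly the route the paper intends: the paper states the corollary as an immediate consequence of Proposition \ref{P.Mult} with no written proof, and your argument simply fills in the details. Your observation that the continuity in conditions (i) and (ii) of Proposition \ref{P.Mult} is automatic --- via Corollary \ref{CoroE} and the closed graph argument recorded at the start of Section 4 --- is precisely the point needed to identify membership in $\cM([V_g,A^{-\gamma}])$ (resp. $\cM([V_g,A_0^{-\gamma}])$) with those conditions, so nothing is missing.
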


Certain multiplier spaces for the Korenblum spaces are known. The following result is Proposition 5 in \cite{BDNS}; see also \cite[Proposition 3.1]{Cont-Diaz}.

\begin{prop}\label{PH} Let $0<\gamma<\delta$ and $h\in H(\D)$. The multiplication operator $M_h\colon A^{-\gamma}\to A^{-\delta}$ exists if and only if $h\in A^{-(\delta-\gamma)}$. That is,
	\[
	\cM(A^{-\gamma},A^{-\delta})=A^{-(\delta-\gamma)}.
	\]
\end{prop}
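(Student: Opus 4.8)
The plan is to establish the two inclusions $A^{-(\delta-\gamma)}\subseteq\cM(A^{-\gamma},A^{-\delta})$ and $\cM(A^{-\gamma},A^{-\delta})\subseteq A^{-(\delta-\gamma)}$ separately.

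For the sufficiency, suppose $h\in A^{-(\delta-\gamma)}$ and fix $f\in A^{-\gamma}$. I would simply split the weight as $(1-|z|)^\delta=(1-|z|)^{\delta-\gamma}(1-|z|)^\gamma$ and estimate pointwise:
\[
(1-|z|)^\delta|h(z)f(z)|=\big((1-|z|)^{\delta-\gamma}|h(z)|\big)\big((1-|z|)^\gamma|f(z)|\big)\leq \|h\|_{-(\delta-\gamma)}\,\|f\|_{-\gamma},\qquad z\in\D.
\]
Taking the supremum over $z\in\D$ shows $hf\in A^{-\delta}$ with $\|hf\|_{-\delta}\leq\|h\|_{-(\delta-\gamma)}\|f\|_{-\gamma}$, so $M_h\colon A^{-\gamma}\to A^{-\delta}$ exists. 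This direction is routine.

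For the necessity, assume $M_h\colon A^{-\gamma}\to A^{-\delta}$ exists; by the closed graph argument recorded at the start of this section it is automatically continuous, so there is $C>0$ with $\|hf\|_{-\delta}\leq C\|f\|_{-\gamma}$ for all $f\in A^{-\gamma}$. The heart of the proof is to feed this inequality a normalized family of test functions peaking at a prescribed point. For each $a\in\D$ I would take
\[
f_a(z):=\frac{1}{(1-\overline{a}z)^\gamma},\qquad z\in\D,
\]
using the principal branch, which is legitimate since $\RR(1-\overline{a}z)\geq 1-|a||z|>0$ places $1-\overline{a}z$ in the right half-plane, whence $f_a\in H(\D)$. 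Two elementary estimates are needed: first, from $|1-\overline{a}z|\geq 1-|a||z|\geq 1-|z|$ one obtains $(1-|z|)^\gamma|f_a(z)|\leq 1$, i.e. $\|f_a\|_{-\gamma}\leq 1$ uniformly in $a$; second, evaluating at $z=a$ gives $|f_a(a)|=(1-|a|^2)^{-\gamma}\geq 2^{-\gamma}(1-|a|)^{-\gamma}$.

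Finally I would combine these. Applying the continuity inequality to $f=f_a$ and reading off the defining supremum of $\|hf_a\|_{-\delta}$ at the single point $z=a$,
\[
(1-|a|)^\delta|h(a)|\,|f_a(a)|\leq \|hf_a\|_{-\delta}\leq C\|f_a\|_{-\gamma}\leq C.
\]
Substituting the lower bound for $|f_a(a)|$ yields $(1-|a|)^{\delta-\gamma}|h(a)|\leq 2^\gamma C$, and since $a\in\D$ is arbitrary this gives $\|h\|_{-(\delta-\gamma)}\leq 2^\gamma C<\infty$, i.e. $h\in A^{-(\delta-\gamma)}$. The only genuine obstacle is the necessity direction, and within it the one nontrivial choice is the normalized peaking family $f_a$; everything else reduces to the two pointwise weight estimates above.
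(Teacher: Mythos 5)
Your proof is correct, but it cannot be "the same as the paper's" because the paper offers no proof of this statement at all: Proposition \ref{PH} is quoted verbatim from the literature (Proposition 5 of \cite{BDNS}; see also \cite[Proposition 3.1]{Cont-Diaz}). Your argument therefore supplies a self-contained proof where the paper has only a citation, and it is worth comparing it with how the paper handles the two sibling results it \emph{does} prove. For the necessity direction of Proposition \ref{L_M} ($\cM(A^{-\gamma},A^{-\delta}_0)=A_0^{-(\delta-\gamma)}$) the paper invokes the Abakumov--Doubtsov theorem \cite[Theorem 1.2]{AD}, producing two functions $f_1,f_2\in A^{-\gamma}$ with $|f_1|+|f_2|$ comparable to $(1-|z|)^{-\gamma}$ on all of $\D$; for Proposition \ref{PH=} it reduces matters to Proposition \ref{PH} via biduality, using $(A_0^{-\gamma})^{**}=A^{-\gamma}$. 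Your route replaces both of these tools by the classical peaking family $f_a(z)=(1-\overline{a}z)^{-\gamma}$, with the two elementary estimates $\|f_a\|_{-\gamma}\leq 1$ (from $|1-\overline{a}z|\geq 1-|z|$ and $|w^{-\gamma}|=|w|^{-\gamma}$ for $w$ in the right half-plane) and $|f_a(a)|=(1-|a|^2)^{-\gamma}\geq 2^{-\gamma}(1-|a|)^{-\gamma}$; the continuity you need before testing is exactly the closed graph remark recorded at the start of Section 4, so that step is legitimate. What your approach buys is elementarity and explicit constants ($\|h\|_{-(\delta-\gamma)}\leq 2^{\gamma}\|M_h\|_{A^{-\gamma}\to A^{-\delta}}$, and conversely $\|M_h\|\leq\|h\|_{-(\delta-\gamma)}$); what the Abakumov--Doubtsov route buys is strength, since control of $|f_1|+|f_2|$ at \emph{every} point (not just a sup over peaking points) is what lets the paper detect the little-oh condition needed for the $A^{-\delta}_0$-valued version in Proposition \ref{L_M}, where a single peaking family would not immediately suffice.
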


Two further results in this direction are the following Propositions \ref{PH=} and \ref{L_M}.

\begin{prop}\label{PH=} Let $0<\gamma<\delta$ and $h\in H(\D)$. The multiplication operator $M_h\colon A^{-\gamma}_0\to A^{-\delta}_0$ exists if and only if $h\in A^{-(\delta-\gamma)}$. That is,
	\[
	\cM(A^{-\gamma}_0,A^{-\delta}_0)=A^{-(\delta-\gamma)}.
	\]
\end{prop}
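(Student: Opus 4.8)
The plan is to establish the asserted identity by proving the two inclusions $A^{-(\delta-\gamma)}\subseteq \cM(A_0^{-\gamma},A_0^{-\delta})$ and $\cM(A_0^{-\gamma},A_0^{-\delta})\subseteq A^{-(\delta-\gamma)}$ separately. For the first I would lean on Proposition \ref{PH} (the analogous statement for the big-Oh spaces) as a black box and then transfer the conclusion from $A^{-\gamma}$ to the little-oh space $A_0^{-\gamma}$ by a density argument; for the second I would probe $h$ with an explicit family of normalized kernel functions. Note that the second inclusion does not use Proposition \ref{PH} at all, so the two halves are genuinely independent.

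For the inclusion $A^{-(\delta-\gamma)}\subseteq \cM(A_0^{-\gamma},A_0^{-\delta})$, fix $h\in A^{-(\delta-\gamma)}$. By Proposition \ref{PH} the operator $M_h\colon A^{-\gamma}\to A^{-\delta}$ is continuous, so it remains only to see that it maps the little-oh space into the little-oh space. First I would check that $hp\in A_0^{-\delta}$ for every polynomial $p$: from $|h(z)|\le \|h\|_{-(\delta-\gamma)}(1-|z|)^{-(\delta-\gamma)}$ and the boundedness of $p$ on $\D$ one obtains $(1-|z|)^\delta|h(z)p(z)|\le \|h\|_{-(\delta-\gamma)}\,\sup_{\D}|p|\,(1-|z|)^\gamma$, which tends to $0$ as $|z|\to 1^-$ because $\gamma>0$. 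Since $A_0^{-\gamma}$ is the closure of $\cP$ in $A^{-\gamma}$, any $f\in A_0^{-\gamma}$ is an $A^{-\gamma}$-limit of polynomials $p_n$, whence $hp_n\to hf$ in $A^{-\delta}$ by continuity; as each $hp_n\in A_0^{-\delta}$ and $A_0^{-\delta}$ is closed in $A^{-\delta}$, we conclude $hf\in A_0^{-\delta}$, as desired.

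For the reverse inclusion, suppose $M_h\colon A_0^{-\gamma}\to A_0^{-\delta}$ exists (hence is continuous, by the closed graph argument recalled at the start of this section). I would test $h$ against the functions
\[
g_w(z):=\frac{1-|w|^2}{(1-\overline{w}z)^{\gamma+1}},\qquad w\in\D.
\]
For fixed $w$ the quotient stays bounded while $(1-|z|)^\gamma\to 0$ as $|z|\to 1^-$, so $g_w\in A_0^{-\gamma}$ and $M_hg_w$ is defined. The two elementary estimates $1-|w|^2\le 2\,|1-\overline{w}z|$ and $1-|z|\le |1-\overline{w}z|$ (both from $|1-\overline{w}z|\ge 1-|w||z|$) give the uniform bound $\|g_w\|_{-\gamma}\le 2$. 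Evaluating $\|M_hg_w\|_{-\delta}\le \|M_h\|\,\|g_w\|_{-\gamma}$ at the single point $z=w$, and using $g_w(w)=(1-|w|^2)^{-\gamma}\ge 2^{-\gamma}(1-|w|)^{-\gamma}$, yields
\[
2^{-\gamma}(1-|w|)^{\delta-\gamma}|h(w)|\le (1-|w|)^\delta|h(w)g_w(w)|\le \|M_h\|\,\|g_w\|_{-\gamma}\le 2\|M_h\|,
\]
so that $\sup_{w\in\D}(1-|w|)^{\delta-\gamma}|h(w)|\le 2^{\gamma+1}\|M_h\|<\infty$, i.e. $h\in A^{-(\delta-\gamma)}$.

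The density step and the evaluation trick are routine; the main obstacle I anticipate is selecting a test family that simultaneously lies in $A_0^{-\gamma}$ (so that $M_h$ is defined on it), has uniformly bounded $A^{-\gamma}$-norm, and peaks at $w$ like $(1-|w|)^{-\gamma}$. The reproducing-kernel powers $(1-\overline{w}z)^{-\gamma}$ achieve the peak but fail to belong to $A_0^{-\gamma}$; inserting the factor $1-|w|^2$ and raising the exponent to $\gamma+1$ restores membership in the little-oh space while the two displayed inequalities keep the norm controlled. An alternative to the test-function route would be to pass to the bitranspose: since $(A_0^{-\alpha})^{**}=A^{-\alpha}$, continuity of $M_h\colon A_0^{-\gamma}\to A_0^{-\delta}$ forces continuity of $M_h^{**}\colon A^{-\gamma}\to A^{-\delta}$, and once one identifies $M_h^{**}$ with multiplication by $h$, Proposition \ref{PH} finishes the job. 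I would expect the identification of the bitranspose to be the delicate point in that approach, which is why I prefer the self-contained argument above.
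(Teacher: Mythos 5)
Your proof is correct, and both halves take genuinely different routes from the paper's. For the implication that existence of $M_h\colon A_0^{-\gamma}\to A_0^{-\delta}$ forces $h\in A^{-(\delta-\gamma)}$, the paper passes to the bitranspose: $(A_0^{-\alpha})^{**}=A^{-\alpha}$ gives continuity of $M_h^{**}=M_h\colon A^{-\gamma}\to A^{-\delta}$, and Proposition \ref{PH} finishes --- exactly the alternative you sketched and deliberately set aside. Your test functions $g_w(z)=(1-|w|^2)(1-\overline{w}z)^{-(\gamma+1)}$ replace that duality machinery by an elementary, self-contained computation: the estimates $1-|w|^2\le 2|1-\overline{w}z|$ and $1-|z|\le|1-\overline{w}z|$ are correct and give $\|g_w\|_{-\gamma}\le 2$, membership $g_w\in A_0^{-\gamma}$ holds since each $g_w$ is bounded on $\D$, and the evaluation at $z=w$ is sound. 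This buys you a quantitative bound $\|h\|_{-(\delta-\gamma)}\le 2^{\gamma+1}\|M_h\|$, independence from Proposition \ref{PH} in this direction, and in fact a slightly stronger statement (you only use that $M_h$ maps $A_0^{-\gamma}$ boundedly into $A^{-\delta}$, not into the little-oh space), whereas the paper's route rests on the identification $M_h^{**}=M_h$, asserted there without comment --- precisely the delicate point you flagged. In the converse direction the paper is more direct than you: for arbitrary $f\in A_0^{-\gamma}$ it observes $(1-|z|)^{\delta}|h(z)f(z)|\le \|h\|_{-(\delta-\gamma)}\,(1-|z|)^{\gamma}|f(z)|\to 0$ as $|z|\to 1^-$, so $M_hf\in A_0^{-\delta}$ at once and only the closed graph theorem is needed for continuity; your detour through Proposition \ref{PH}, polynomials and density is correct but superfluous, since the one-line estimate you wrote for polynomials works verbatim for $f$ itself once $\sup_{\D}|p|$ is replaced by the little-oh decay of $(1-|z|)^{\gamma}|f(z)|$.
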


\begin{proof}
	Suppose that the operator $M_h\colon A^{-\gamma}_0\to A^{-\delta}_0$ exists and is continuous. Then its bi--transpose operator $M^{**}_h\colon (A^{-\gamma}_0)^{**}\to (A^{-\delta}_0)^{**}$ is continuous. Since $(A^{-\gamma}_0)^{**}=A^{-\gamma}$ and   $(A^{-\delta}_0)^{**}=A^{-\delta}$ (and so $M^{**}_h=M_h$),  Proposition \ref{PH} implies that $h\in A^{-(\delta-\gamma)}$.
	
	Conversely, suppose that $h\in A^{-(\delta-\gamma)}$. Fix $f\in A^{-\gamma}_0$. For each $z\in\D$, we have that
	\[
	|(M_hf)(z)|(1-|z|)^\delta=|h(z)|(1-|z|)^{\delta-\gamma}|f(z)|(1-|z|)^\gamma\leq \|h\|_{-(\delta-\gamma)}|f(z)|(1-|z|)^\gamma.
	\]
	Since $f\in A^{-\gamma}$, this implies that $\lim_{|z|\to 1^-}|(M_hf)(z)|(1-|z|)^\delta=0$ and hence, $M_h\in A^{-\delta}_0$. So, $M_h\colon A^{-\gamma}_0\to A^{-\delta}_0$ exists as a linear map. The continuity follows from the closed graph theorem.
\end{proof}

A more general result than Proposition \ref{PH=}, concerning weighted composition operators, occurs in \cite[Proposition 3.2]{Cont-Diaz}.

An analogue of the above characterization is the following result.

\begin{prop}\label{L_M} Let $0<\gamma<\delta$ and $h\in H(\D)$.
The multiplication operator $M_h\colon A^{-\gamma}\to A^{-\delta}_0$ exists  if and only if $h\in A_0^{-(\delta-\gamma)}$. That is,
\[
\cM(A^{-\gamma},A^{-\delta}_0)=A^{-(\delta-\gamma)}_0.
\]
	\end{prop}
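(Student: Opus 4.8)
The plan is to prove both implications, with sufficiency by a direct pointwise estimate and necessity via a weak-compactness argument that runs exactly parallel to the proof of Proposition \ref{PropF}.

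For sufficiency, suppose $h\in A^{-(\delta-\gamma)}_0$. Fixing $f\in A^{-\gamma}$, I would factor, for each $z\in\D$,
\[
(1-|z|)^\delta|(M_hf)(z)|=\left[(1-|z|)^{\delta-\gamma}|h(z)|\right]\left[(1-|z|)^\gamma|f(z)|\right]\leq \|f\|_{-\gamma}\,(1-|z|)^{\delta-\gamma}|h(z)|,
\]
and let $|z|\to 1^-$: since $h\in A^{-(\delta-\gamma)}_0$ the bracketed boundary factor tends to $0$, so $M_hf\in A^{-\delta}_0$. Thus $M_h\colon A^{-\gamma}\to A^{-\delta}_0$ exists as a linear map, and its continuity follows from the closed graph theorem exactly as explained at the beginning of this section. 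This direction is routine.

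For necessity, suppose $M_h\colon A^{-\gamma}\to A^{-\delta}_0$ exists, hence is continuous. Restricting to the closed subspace $A^{-\gamma}_0\subseteq A^{-\gamma}$ yields a continuous operator $M_h\colon A^{-\gamma}_0\to A^{-\delta}_0$; in particular $h\in A^{-(\delta-\gamma)}$ by Proposition \ref{PH=}, so (via Proposition \ref{PH}) all multiplication operators in sight are genuinely bounded between the relevant Korenblum spaces. Using $(A^{-\gamma}_0)^{**}=A^{-\gamma}$ and $(A^{-\delta}_0)^{**}=A^{-\delta}$, the bitranspose of $M_h\colon A^{-\gamma}_0\to A^{-\delta}_0$ is again multiplication by $h$, that is $M_h^{**}\colon A^{-\gamma}\to A^{-\delta}$. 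The hypothesis states precisely that $M_h^{**}((A^{-\gamma}_0)^{**})=M_h(A^{-\gamma})\subseteq A^{-\delta}_0$, which is the Gantmacher criterion (\cite[\S 42, Proposition 2(2)]{24}) for $M_h\colon A^{-\gamma}_0\to A^{-\delta}_0$ to be \emph{weakly compact}.

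I would then invoke the equivalence of weak compactness and compactness for these operators, \cite[Theorem 5.1]{Cont-Diaz}, to upgrade $M_h\colon A^{-\gamma}_0\to A^{-\delta}_0$ to a compact operator, and finally apply \cite[Corollary 4.5]{Cont-Diaz} with $\varphi(z)=z$ to read off $\lim_{|z|\to 1^-}|h(z)|(1-|z|)^{\delta-\gamma}=0$, i.e. $h\in A^{-(\delta-\gamma)}_0$. The main obstacle is exactly this necessity direction: a naive approach using the normalized peaking functions $f_w(z)=\bigl((1-|w|^2)/(1-\overline{w}z)^2\bigr)^{\gamma}$ (which satisfy $\|f_w\|_{-\gamma}\leq 1$) only recovers, upon evaluating $M_hf_w$ at $z=w$, the \emph{uniform} bound $\sup_{z}(1-|z|)^{\delta-\gamma}|h(z)|<\infty$, namely $h\in A^{-(\delta-\gamma)}$, and it does not by itself deliver the decisive ``little-oh'' decay. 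Passing through weak compactness is what converts the bidual containment $M_h(A^{-\gamma})\subseteq A^{-\delta}_0$ into boundary decay, so the crux is recognizing the hypothesis as the bitranspose/weak-compactness condition and then quoting the Contreras--D\'iaz machinery already used in Proposition \ref{PropF}.
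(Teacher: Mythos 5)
Your proof is correct: the sufficiency half coincides with the paper's, but your necessity argument takes a genuinely different route. The paper avoids duality altogether: it invokes the Abakumov--Doubtsov theorem \cite[Theorem 1.2]{AD} to produce \emph{two fixed functions} $f_1,f_2\in H(\D)$ with $c_1(1-|z|)^{-\gamma}\leq |f_1(z)|+|f_2(z)|\leq c_2(1-|z|)^{-\gamma}$ on $\D$, so that $f_1,f_2\in A^{-\gamma}$ and
\[
|h(z)|(1-|z|)^{\delta-\gamma}\leq c_1^{-1}\left((1-|z|)^{\delta}|(M_hf_1)(z)|+(1-|z|)^{\delta}|(M_hf_2)(z)|\right),\quad z\in\D,
\]
whence applying the hypothesis only to $f_1$ and $f_2$ gives $h\in A^{-(\delta-\gamma)}_0$ directly. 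This is precisely the device that defeats the obstacle you correctly identified with the peaking family $f_w$: the varying family is replaced by two fixed elements of $A^{-\gamma}$ whose moduli jointly dominate the reciprocal weight. Your route instead recycles the machinery of the paper's proof of Proposition \ref{PropF}: restrict to $A_0^{-\gamma}$, identify the bitranspose as $M_h\colon A^{-\gamma}\to A^{-\delta}$ (your preliminary step that $h\in A^{-(\delta-\gamma)}$ via Proposition \ref{PH=} is indeed what legitimizes this identification, since it makes $M_h\colon A^{-\gamma}\to A^{-\delta}$ a bounded operator agreeing with the restriction on the weak*-dense subspace $A_0^{-\gamma}$), read the hypothesis $M_h(A^{-\gamma})\subseteq A^{-\delta}_0$ as Gantmacher's weak-compactness criterion \cite[\S 42, Proposition 2(2)]{24}, upgrade to compactness by \cite[Theorem 5.1]{Cont-Diaz}, and extract the boundary decay from \cite[Corollary 4.5]{Cont-Diaz}. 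Both arguments are sound; the paper's is shorter and more elementary (two test functions and a pointwise estimate), while yours is heavier but stays entirely within the toolkit the paper already deploys elsewhere and yields the additional information that $M_h\colon A_0^{-\gamma}\to A_0^{-\delta}$ is in fact compact whenever $M_h(A^{-\gamma})\subseteq A^{-\delta}_0$.
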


\begin{proof} Suppose that the operator $M_h\colon A^{-\gamma}\to A^{-\delta}_0$ exists and is continuous. According to \cite[Theorem 1.2]{AD} there exist functions $f_1,f_2\in H(\D)$ and positive constants $c_1$, $c_2$ such that $c_1(1-|z|)^{-\gamma}\leq |f_1(z)|+|f_2(z)|\leq c_2(1-|z|)^{-\gamma}$ for all $z\in \D$. Then both $f_1, f_2\in A^{-\gamma}$ and hence,  $M_hf_1, M_hf_2\in A^{-\delta}_0$. Moreover,
	\[
	|h(z)|(1-|z|)^{\delta-\gamma}\leq c_1^{-1} (1-|z|)^{\delta}(|f_1(z)|+|f_2(z)|)|h(z)|, \quad z\in\D.
	\]
	Since both  $M_hf_1, M_hf_2\in A^{-\delta}_0$, this implies that $\lim_{|z|\to 1^-}|h(z)|(1-|z|)^{\delta-\gamma}=0$, that is, $h\in A_0^{-(\delta-\gamma)}$.
	
	Conversely, suppose that $h\in A_0^{-(\delta-\gamma)}$. Fix  $f\in A^{-\gamma}$. For each  $z\in\D$, we have that
	\[
	|(M_hf)(z)|(1-|z|)^\delta=|h(z)|(1-|z|)^{\delta-\gamma}|f(z)|(1-|z|)^{\gamma}\leq \|f\|_{-\gamma}|h(z)|(1-|z|)^{\delta-\gamma}.
	\]
Since $h\in A^{-(\delta-\gamma)}$, this implies that $\lim_{|z|\to 1^-}|(M_hf)(z)|(1-|z|)^\delta=0$ and hence, that $M_hf\in A^{-\delta}_0$. So, $M_h\colon A^{-\gamma}\to A^{-\delta}_0$ exists as a linear map.  The continuity now follows from the closed graph theorem.
	\end{proof}

\section{The  range of $V_g$ and related properties}

	Let $X$ be a Banach space of analytic functions on $\D$. Whenever $T\colon X\to X$ is an injective continuous operator such that $([T,X],\|\cdot\|_{[T,X]})$ is a Banach space,
Proposition \ref{P5} and Corollary \ref{C1} reveal an intimate connection between the three properties of the operator $T$ having closed range, of the natural inclusion map $X\subseteq [T,X]$ having closed range, and of $X$ being a proper subspace of $[T,X]$. In the latter part of this section we investigate these connections for the Cesàro operator $C$ and for the operators $V_g$, when they act in Korenblum growth spaces. This requires various preliminary results concerning the auxiliary operators $S$ and $T$ defined in \eqref{5A} and \eqref{5B}, respectively, which act in certain weighted Banach spaces of analytic functions on $\D$ which we now introduce.

A \textit{weight} $v$ is a continuous, non-increasing function $v \colon [0, 1) \to (0,\infty)$. We
extend $v$ to $\D$ by setting $v(z) := v(|z|)$, for $z \in\D$. Note that $v(z) \leq v(0)$ for all $z\in\D$. It is assumed throughout this section that $\lim_{r\to 1^-}v(r)=0$.
 Given a weight $v$ on $[0, 1)$,  define the corresponding \textit{weighted Banach
spaces of analytic functions} on $\D$ by
\[
H^\infty_v := \{f\in H(\D) : \ \|f\|_{\infty,v} := \sup_{z\in\D}
	|f(z)|v(z) <\infty\},
	\]
and
\[
H^0_v := \{f \in H(\D) :\ \lim_{|z|\to 1^-}|f(z)|v(z) = 0\},
\]
both endowed with the norm $\|\cdot\|_{\infty,v}$.  For details concerning these spaces, see \cite{Bo} and the extensive list of references given there.

Observe that the Korenblum growth  spaces $A^{-\gamma}$ and $A^{-\gamma}_0$, for $\gamma>0$, correspond to $H^\infty_v$ and $H^0_v$, respectively, for the weight function $v(r):=(1-r)^\gamma$, for $r\in [0,1)$.

Consider now the forward shift operator $S\colon H(\D) \to H(\D)$ defined to be the multiplication operator
\begin{equation}\label{5A}
S(f)(z) := zf(z),\quad f \in H(\D), z \in\D.
\end{equation}
Clearly, $S\in \cL(H(\D))$.
Let the linear subspace $H_0(\D):=\{f\in H(\D):\ f(0)=0\}\subseteq H(\D)$ be equipped with the topology induced by $H(\D)$. Clearly, $H_0(\D)$ is a closed subspace of $H(\D)$ and $S(H(\D))\subseteq H_0(\D)$. Therefore, $S\in\cL( H(\D), H_0(\D))$. Moreover, $S\colon  H(\D)\to H_0(\D)$ is surjective. To see this, we consider the operator $T\colon H_0(\D) \to H(\D)$ defined, for each $f\in H_0(\D)$,  by $(Tf)(0):=f'(0)$ and
\begin{equation}\label{5B}
(Tf)(z):=\frac{1}{z}f(z),\quad z\in\D\setminus\{0\}.
\end{equation}
Since $(Tf)(z):=\frac{f(z)-f(0)}{z-0}$, for $z\in \D\setminus\{0\}$, it is clear that  $Tf\in H(\D)$ for all $f\in H_0(\D)$. Moreover, $S\in\cL( H_0(\D), H(\D))$. Indeed, for $r\in (0,1)$ and $f\in H_0(\D)$ fixed, note that
\[
\sup_{|z|\leq r}|(Tf)(z)|=\max_{|z|=r}|(Tf)(z)|=\frac{1}{r}\max_{|z|=r}|f(z)|.
\]
Now, given $h\in H_0(\D)$, recall  that $Th\in H(\D)$ and $(STh)(z)=z\frac{1}{z}h(z)=h(z)$ for all $z\in\D\setminus\{0\}$. So, $STh$ and $h$ are analytic functions on $\D$ which coincide except possibly at $0$. Therefore, $STh=h$ on $\D$. This proves that $S\colon  H(\D)\to H_0(\D)$ is surjective.

Since the evaluation functional $\delta_0$ belongs to both $(H_v^\infty)^{*}$ and $(H_v^0)^{*}$ we can define the Banach spaces
\[
\cE_v^\infty:=\{f\in H^\infty_v:\ f(0)=0\}=\Ker(\delta_0)
\]
and
\[
\cE_v^0:=\{f\in H^0_v:\ f(0)=0\}=\Ker(\delta_0),
\]
both endowed with the norm $\|\cdot\|_{\infty,v}$. Of course, $\cE_v^\infty$ (resp., $\cE_v^0$) is a closed subspace of $H^\infty_v$ (resp., $H^0_v$).

In the notation of Proposition \ref{P.DensePol} we have the following fact.

\begin{lemma}\label{L.Dens} Let $\cP_0:=\{p\in \cP:\  p(0)=0\}$.
	 Then the closure of $\cP_0$ in $H^\infty_v$ is equal to $\cE^0_v$.
\end{lemma}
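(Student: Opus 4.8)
The plan is to establish the two inclusions $\overline{\cP_0}\su \cE^0_v$ and $\cE^0_v\su\overline{\cP_0}$ (closures taken in $H^\infty_v$) separately. The first is the easy half: any $p\in\cP_0$ is a polynomial with $p(0)=0$, hence bounded on $\overline{\D}$, so $|p(z)|v(z)\le\big(\sup_{\overline{\D}}|p|\big)v(z)\to 0$ as $|z|\to 1^-$ because $v(r)\to 0$; thus $p\in\cE^0_v$ and $\cP_0\su\cE^0_v$. Since a routine $\varepsilon/2$ argument shows $H^0_v$ is norm-closed in $H^\infty_v$, and $\delta_0\in(H^\infty_v)^*$, the set $\cE^0_v=H^0_v\cap\Ker(\delta_0)$ is closed in $H^\infty_v$; it therefore contains the closure $\overline{\cP_0}$.

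For the reverse (and substantive) inclusion I would fix $f\in\cE^0_v$ and approximate it in two stages. First dilate: put $f_r(z):=f(rz)$ for $r\in(0,1)$. Each $f_r$ is analytic on $\{|z|<1/r\}\supseteq\overline{\D}$, so its Taylor partial sums $s_N$ converge to $f_r$ uniformly on $\overline{\D}$; since $v\le v(0)$ this gives $\|s_N-f_r\|_{\infty,v}\le v(0)\sup_{\overline{\D}}|s_N-f_r|\to 0$, so each $f_r$ lies in $\overline{\cP}$. The decisive observation is that $f(0)=0$ forces $f_r(0)=f(0)=0$, whence $s_N(0)=\widehat{f_r}(0)=f_r(0)=0$, so in fact $s_N\in\cP_0$. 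Consequently, once I know that $\|f_r-f\|_{\infty,v}\to 0$ as $r\to 1^-$, a diagonal choice (first $r$ near $1$, then $N$ large) places $f$ in $\overline{\cP_0}$, finishing the proof.

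The main obstacle is precisely this norm convergence of the dilations, which must exploit both $f\in H^0_v$ and the monotonicity of $v$. Given $\varepsilon>0$, I would choose $\rho\in(0,1)$ with $|f(w)|v(w)<\varepsilon/3$ for $\rho\le|w|<1$, and split the supremum defining $\|f_r-f\|_{\infty,v}$ at the radius $\rho/r$ rather than at $\rho$. On the outer region $|z|\ge\rho/r$ both $z$ and $rz$ have modulus $\ge\rho$, and since $r|z|\le|z|$ yields $v(z)\le v(rz)$ by monotonicity, I can bound $|f(rz)|v(z)\le|f(rz)|v(rz)<\varepsilon/3$ together with $|f(z)|v(z)<\varepsilon/3$. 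On the inner region $|z|<\rho/r$, which is a fixed compact subset of $\D$ once $r$ is bounded away from $\rho$, I use $v\le v(0)$ and the locally uniform convergence $f_r\to f$ to make the contribution small for $r$ near $1$. The delicate point that dictates this whole setup is the intermediate annulus $\rho\le|z|<\rho/r$: a naive cutoff at $|z|=\rho$ fails because a point $z$ just outside $\rho$ is carried by the dilation to $rz$ inside $\rho$, where no tail estimate is available, and it is exactly the cutoff at $\rho/r$ combined with the monotonicity of $v$ that circumvents this difficulty.
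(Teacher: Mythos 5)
Your proof is correct, but it takes a more self-contained route than the paper. The paper's argument for the substantive inclusion $\cE^0_v\subseteq\overline{\cP_0}$ is two lines: it cites Bierstedt--Summers for the known fact that the polynomials $\cP$ are dense in $H^0_v$, picks $q_n\to h$ in $H^0_v$, and then corrects for the value at the origin by passing to $p_n:=q_n-q_n(0)\in\cP_0$, using $q_n(0)\to h(0)=0$ (so the subtracted constants are norm-null, since $\|c\|_{\infty,v}=|c|\,v(0)$). You instead re-prove the density result from scratch: dilations $f_r$ followed by Taylor partial sums, with the constraint $f(0)=0$ handled automatically because dilation and truncation both preserve vanishing at the origin, so no correction step is needed. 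Your technical core --- the cutoff at radius $\rho/r$ rather than $\rho$, combined with $v(z)\le v(rz)$ from the monotonicity of $v$ --- is exactly what makes $\|f_r-f\|_{\infty,v}\to 0$ work, and it is sound (on the outer region both $|z|\ge\rho$ and $|rz|\ge\rho$, so both terms are controlled by the $H^0_v$ tail estimate; the inner region sits in a fixed compact set for $r$ bounded away from $\rho$). What each approach buys: the paper's proof is short and leans on the literature, leaving the role of the hypotheses on $v$ implicit in the citation; yours is longer but makes explicit precisely where the radiality, monotonicity, and boundary-vanishing of $v$ enter, and in effect reconstructs the relevant special case of the cited Bierstedt--Summers theorem. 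The easy inclusion $\overline{\cP_0}\subseteq\cE^0_v$ is handled equivalently in both (closedness of $H^0_v$ in $H^\infty_v$ plus continuity of $\delta_0$).
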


\begin{proof} Let $g\in H^\infty_v$. If there exists $(p_n)_{n\in\N}\subseteq \cP_0$ such that $p_n\to g$ in $H^\infty_v$ as $n\to\infty$, then necessarily $g\in H^0_v$ because $\cP_0\subseteq H^0_v$ and $H_v^0$ is closed in $H^\infty_v$. Moreover, $g(0)=\lim_{n\to\infty}p_n(0)=0$. Accordingly, $g\in \cE^\infty_v$. Since  $g\in H^0_v$, we can conclude that $g\in \cE^0_v$.
	
	Let $h\in \cE_v^0\subseteq H^0_v$ be fixed. Then there exists a sequence  $(q_n)_{n\in\N}$ of polynomials such that $q_n\to h$ in $H^0_v$, \cite{BS}. Accordingly, $\lim_{n\to\infty}q_n(0)=h(0)=0$. This implies that $p_n:=q_n-q_n(0)\to h$ in $H^0_v$ as $n\to\infty$. Since  $(p_n)_{n\in\N}\subseteq \cP_0$, this completes the proof.
	\end{proof}

\begin{prop}\label{P_C1} Let $S$ and $T$ be the operators given by \eqref{5A} and \eqref{5B}, respectively.
	\begin{itemize}
		\item[\rm (i)] The operator $S$ satisfies  $S\in \cL(H^\infty_v)$ with $S(H^\infty_v)\subseteq \cE^\infty_v$ and $T$ satisfies $T\in \cL(\cE^\infty_v,H^\infty_v)$. Moreover, $STh=h$, for all $h\in \cE^\infty_v$.
			\item[\rm (ii)] The operator $S$ satisfies  $S\in \cL(H^0_v)$ with $S(H^0_v)\subseteq \cE^0_v$ and $T$ satisfies $T\in \cL(\cE^0_v,H^0_v)$. Moreover,   $STh=h$, for all $h\in \cE^0_v$.
	\end{itemize}
	\end{prop}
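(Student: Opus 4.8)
The plan is to verify each assertion directly, the only non-routine point being the boundedness of $T$ near the origin.

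First consider $S$. Given $f\in H^\infty_v$, the inequality $|z|<1$ on $\D$ gives $|zf(z)|v(z)\le |f(z)|v(z)$, so $\|Sf\|_{\infty,v}\le\|f\|_{\infty,v}$; thus $S\in\cL(H^\infty_v)$ with norm at most $1$, and since $(Sf)(0)=0$ in fact $S(H^\infty_v)\subseteq\cE_v^\infty$. For part (ii) the same computation together with $|z|\le 1$ shows $|zf(z)|v(z)\to 0$ as $|z|\to1^-$ whenever $f\in H^0_v$, so $S\in\cL(H^0_v)$ and $S(H^0_v)\subseteq\cE_v^0$. The identity $STh=h$ on the relevant subspace is immediate from the identity $STh=h$ on $H_0(\D)$ established just before the proposition, because $\cE_v^\infty,\cE_v^0\subseteq H_0(\D)$.

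The substantive step is to show $T(\cE_v^\infty)\subseteq H^\infty_v$ with a continuity estimate. Fix $h\in\cE_v^\infty$; since $h(0)=0$ the function $Th$ is analytic on all of $\D$. On the annulus $\tfrac12\le|z|<1$ one has $|(Th)(z)|=|h(z)|/|z|\le 2|h(z)|$, hence $|(Th)(z)|v(z)\le 2\|h\|_{\infty,v}$. On the disc $|z|\le\tfrac12$ I would invoke the maximum modulus principle for $Th$: its modulus is dominated by $\max_{|w|=1/2}|(Th)(w)|\le 2\max_{|w|=1/2}|h(w)|$, and since $v(w)=v(\tfrac12)$ on that circle, $\max_{|w|=1/2}|h(w)|\le\|h\|_{\infty,v}/v(\tfrac12)$; combined with $v(z)\le v(0)$ this yields $|(Th)(z)|v(z)\le 2v(0)v(\tfrac12)^{-1}\|h\|_{\infty,v}$ there. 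The two bounds give $Th\in H^\infty_v$ and $\|Th\|_{\infty,v}\le 2v(0)v(\tfrac12)^{-1}\|h\|_{\infty,v}$ (using $v(0)\ge v(\tfrac12)$), so $T\in\cL(\cE_v^\infty,H^\infty_v)$. Alternatively, once one knows $T(\cE_v^\infty)\subseteq H^\infty_v$, continuity also follows from the closed graph theorem, since the inclusions into $H(\D)$ are continuous and $T$ is continuous from $H_0(\D)$ into $H(\D)$.

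For part (ii) it remains to check that $T$ maps $\cE_v^0$ into $H^0_v$: if $h\in\cE_v^0$ then $|(Th)(z)|v(z)=|z|^{-1}|h(z)|v(z)\to 0$ as $|z|\to1^-$, because $|z|^{-1}\to1$ while $|h(z)|v(z)\to0$. Since $\cE_v^0$ is a closed subspace of $\cE_v^\infty$ and $H^0_v$ carries the norm induced from $H^\infty_v$, the continuity of $T\colon\cE_v^0\to H^0_v$ follows by restriction from the estimate already established. The main---indeed the only---obstacle is the bound on $Th$ on $|z|\le\tfrac12$, where both the vanishing condition $h(0)=0$ (to make $Th$ analytic) and the maximum modulus principle are essential; elsewhere the estimates are routine.
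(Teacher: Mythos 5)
Your proof is correct. For part (i) it coincides with the paper's argument: the same splitting of $\D$ at $|z|=\tfrac12$, the maximum modulus principle on the inner disc (the paper writes this as $\sup_{|z|\le 1/2}|(Tf)(z)|=\max_{|z|=1/2}|(Tf)(z)|$), the monotonicity of $v$, and even the same constant $2v(0)/v(\tfrac12)$. Where you genuinely diverge is in part (ii), in showing $T(\cE_v^0)\subseteq H^0_v$. The paper proves this indirectly: it first establishes (Lemma \ref{L.Dens}, which rests on the density of polynomials in $H^0_v$ from \cite{BS}) that $\cE_v^0$ is the closure of $\cP_0$ in $H^\infty_v$, notes that $T(\cP_0)$ consists of polynomials and hence lies in $H^0_v$ because $\lim_{r\to 1^-}v(r)=0$, and then uses the already-proved continuity of $T\colon\cE_v^\infty\to H^\infty_v$ together with the closedness of $H^0_v$ in $H^\infty_v$ to conclude $T(\cE_v^0)=T\bigl(\overline{\cP_0}^{H^\infty_v}\bigr)\subseteq\overline{T(\cP_0)}^{H^\infty_v}\subseteq H^0_v$. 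Your argument is a direct limit computation: for $\tfrac12<|z|<1$ one has $|(Th)(z)|v(z)=|z|^{-1}|h(z)|v(z)\le 2|h(z)|v(z)\to 0$, which immediately gives $Th\in H^0_v$. This is more elementary and self-contained (it needs neither Lemma \ref{L.Dens} nor the standing hypothesis $\lim_{r\to 1^-}v(r)=0$, which the paper's route uses to place $T(\cP_0)$ in $H^0_v$), whereas the paper's route recycles machinery it has already set up and will reuse elsewhere. Both yield the same continuity estimate, since the norm on $\cE_v^0$ is inherited from $\cE_v^\infty$.
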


\begin{proof} (i) Let $f\in H^\infty_v$. Then
	\[
	\|Sf\|_{\infty,v}=\sup_{z\in\D}v(z)|z|\cdot |f(z)|\leq \sup_{z\in\D}v(z)|f(z)|=\|f\|_{\infty,v},
	\]
	which implies that $S\in \cL(H^\infty_v)$ with $\|S\|_{H^\infty_v\to H^\infty_v}\leq 1$. Clearly, $S(H^\infty_v)\subseteq \cE^\infty_v$.
	
Next, since $\cE^\infty_v\subseteq H_0(\D)$, observe that $T(\cE^\infty_v)\subseteq H(\D)$. Moreover, given  $f\in \cE^\infty_v$,  it follows from \eqref{5B} that
	\begin{align*}
	\sup_{|z|\leq 1/2}v(z)|(Tf)(z)|&\leq v(0)\max_{|z|=1/2}|(Tf)(z)|=v(0)\max_{|z|=1/2}\frac{1}{|z|}|f(z)|\\
&= \frac{2v(0)}{v(1/2)}\max_{|z|=1/2}v(1/2)|f(z)|\leq \frac{2v(0)}{v(1/2)}\|f\|_{\infty,v},
	\end{align*}
and that
\[
\sup_{1/2<|z|<1}v(z)|(Tf)(z)|=\sup_{1/2<|z|<1}v(z)\frac{1}{|z|}|f(z)|\leq 2\sup_{1/2<|z|<1}v(z)|f(z)|\leq 2\|f\|_{\infty,v}.
\]
Therefore,
\[
\|Tf\|_{\infty,v}\leq \max\left\{2,\frac{2v(0)}{v(1/2)}\right\}\|f\|_{\infty,v}.
\]
This implies that $T\in \cL(\cE^\infty_v,H^\infty_v)$.

Finally, for every $h\in \cE^\infty_v$ we have that $Th\in H^\infty_v$ and that $STh=h$; see the discussion after \eqref{5B}.

(ii) By part (i) we have that $S\in \cL(H^\infty_v)$. To conclude that $S\in \cL(H^0_v)$, it therefore suffices to establish that $S(H^0_v)\subseteq H^0_v$. So, fix  $f\in H^0_v$, in which case $\lim_{|z|\to 1^-}v(z)|f(z)|=0$. Since $v(z)|(Sf)(z)|\leq v(z)|f(z)|$ for all $z\in\D$, it follows that $\lim_{|z|\to 1^-}v(z)|(Sf)(z)|=0$. Accordingly, $Sf\in H^0_v$.

 Since $(Sf)(0)=0$ for all $f\in H_v^0$, it is clear that   $S(H^0_v)\subseteq \cE^0_v$.

 By part (i)
 we also  have that $T\in \cL(\cE^\infty_v,H^\infty_v)$. So, to complete the proof it remains to show  that $T(\cE^0_v)\subseteq H^0_v$. To see this,  observe that $Tp$ is a polynomial for every $p\in \cP_0$; see \eqref{5B}. Then $T(\cP_0)\subseteq H^0_v$ (as $\lim_{r\to 1^-}v(r)=0$). Since  $T\in \cL(\cE^\infty_v,H^\infty_v)$ and $H^0_v$ is closed in $H^\infty_v$, it follows via Lemma \ref{L.Dens} that
  \[
 T(\cE_v^0)=T(\overline{\cP_0}^{H^\infty_v})\subseteq \overline{T(\cP_0)}^{H^\infty_v}\subseteq \overline{H^0_v}^{H^\infty_v}=H^0_v.
 \]
So, $T\in \cL(\cE_v^0,H^0_v)$. Finally, $STh=h$ for each $h\in \cE^0_v$, as follows from part (i).
	\end{proof}

Related to the Volterra operator $V_g\colon H(\D)\to H(\D)$, where $g\in H(\D)$, is the operator $T_g\colon H(\D)\to H(\D)$ defined by
\begin{equation}\label{eq.T}
	(T_gf)(z):=\frac{1}{z}\int_0^zf(\xi)g'(\xi)\,d\xi,\ z\in \D\setminus\{0\},\quad (T_gf)(0):=f(0),
	\end{equation}
for each $f\in H(\D)$. Clearly, $T_g\in \cL(H(\D))$. Suppose that $T_gf=0$ for some $f\in H(\D)$. Then \eqref{eq.T} implies that $\int_0^zf(\xi)g'(\xi)\,d\xi=0$, that is, $(V_gf)(z)=0$ for all $z\in\D\setminus\{0\}$. Since also $(V_gf)(0)=0$, it follows that $V_gf=0$ in $H(\D)$. By the injectivity of $V_g\in \cL(H(\D))$ we can conclude that $f=0$. Accordingly, $T_g\in \cL(H(\D))$ is \textit{injective} and so its optimal domain spaces $[T_g, H^\infty_v]$ and $[T_g, H^0_v]$ are \textit{normed spaces} for every weight $v$ such that $T_g(H^\infty_v)\subseteq H^\infty_v$ (resp. $T_g(H^0_v)\subseteq H^0_v$); see Proposition \ref{P1}(ii) with $T=T_g$ and $X=H^\infty_v$ (resp. $H^0_v$).

\begin{prop}\label{P.Ugu} Let $v\colon [0,1)\to (0,\infty)$ be a weight function and $g\in H(\D)$.
	\begin{itemize}
		\item[\rm (i)] The operator  $V_g\colon H^\infty_v\to H^\infty_v$ is continuous if and only if the operator $T_g\colon H^\infty_v\to H^\infty_v$ is continuous.
		\item[\rm (ii)] Let $V_g\colon H^\infty_v\to H^\infty_v$ be continuous. Then $[V_g,H^\infty_v]=[T_g, H^\infty_v]$ as linear spaces in $H(\D)$ and with equivalent norms. 
		\item[\rm (iii)] The operator $V_g\colon H^0_v\to H^0_v$ is continuous if and only if the operator $T_g\colon H^0_v\to H^0_v$ is continuous.
		\item[\rm (iv)] Let $V_g\colon H^0_v\to H^0_v$ be continuous. Then $[V_g,H^0_v]=[T_g, H^0_v]$ as linear spaces in $H(\D)$ and with equivalent norms. 
	\end{itemize}
	\end{prop}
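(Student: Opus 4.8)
The plan is to reduce everything to the two elementary factorizations $V_g = S\circ T_g$ and $T_g = T\circ V_g$, where $S$ and $T$ are the shift operators of \eqref{5A} and \eqref{5B}. The first identity is immediate from \eqref{eq.V} and \eqref{eq.T}: one has $(V_gf)(z)=z\,(T_gf)(z)$ for $z\in\D\setminus\{0\}$, and since both $(V_gf)(0)=0$ and $(S(T_gf))(0)=0$, it follows that $V_gf=S(T_gf)$ on all of $\D$. For the second identity, observe that $V_gf\in H_0(\D)$ for every $f\in H(\D)$ and that $(T(V_gf))(z)=\frac{1}{z}(V_gf)(z)=(T_gf)(z)$ for $z\in\D\setminus\{0\}$; since $T(V_gf)$ and $T_gf$ are both analytic on $\D$ and agree off the origin, they coincide, whence $T_g=T\circ V_g$.

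Granting these factorizations, I would obtain part (i) directly from Proposition \ref{P_C1}(i). If $V_g\colon H^\infty_v\to H^\infty_v$ is continuous, then $V_g(H^\infty_v)\subseteq \cE^\infty_v$ because $(V_gf)(0)=0$, and since $T\in\cL(\cE^\infty_v,H^\infty_v)$ the composition $T_g=T\circ V_g\colon H^\infty_v\to H^\infty_v$ is continuous. Conversely, if $T_g\colon H^\infty_v\to H^\infty_v$ is continuous, then $V_g=S\circ T_g\colon H^\infty_v\to H^\infty_v$ is continuous because $S\in\cL(H^\infty_v)$.

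For part (ii), assuming $V_g\colon H^\infty_v\to H^\infty_v$ continuous, I would first check equality of the underlying sets: if $f\in[V_g,H^\infty_v]$ then $V_gf\in\cE^\infty_v$, so $T_gf=T(V_gf)\in H^\infty_v$ and $f\in[T_g,H^\infty_v]$; conversely, if $f\in[T_g,H^\infty_v]$ then $V_gf=S(T_gf)\in H^\infty_v$ and $f\in[V_g,H^\infty_v]$. For the norm equivalence, the bound $\|S\|_{H^\infty_v\to H^\infty_v}\leq 1$ from Proposition \ref{P_C1}(i) gives $\|f\|_{[V_g,H^\infty_v]}=\|S(T_gf)\|_{\infty,v}\leq\|f\|_{[T_g,H^\infty_v]}$, while continuity of $T$ yields $\|f\|_{[T_g,H^\infty_v]}=\|T(V_gf)\|_{\infty,v}\leq\|T\|_{\cE^\infty_v\to H^\infty_v}\|f\|_{[V_g,H^\infty_v]}$, so the two norms are equivalent.

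Parts (iii) and (iv) would follow by repeating these arguments verbatim, now invoking Proposition \ref{P_C1}(ii), which supplies $S\in\cL(H^0_v)$ with $S(H^0_v)\subseteq\cE^0_v$, together with $T\in\cL(\cE^0_v,H^0_v)$ and $STh=h$ on $\cE^0_v$; the factorizations $V_g=S\circ T_g$ and $T_g=T\circ V_g$ are unchanged. The only delicate point is the identification of these factorizations at the origin, which is precisely what the analyticity argument in the first paragraph settles; once that is fixed, the entire proposition reduces to composing with the continuous operators $S$ and $T$.
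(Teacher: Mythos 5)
Your proposal is correct and follows essentially the same route as the paper: the paper's proof likewise rests on the factorizations $V_g=S\circ T_g$ and $T_g=T\circ V_g$ together with Proposition \ref{P_C1}, deducing part (i) by composing with $S$ and $T$, part (ii) by the two continuous inclusions with the same norm estimates (via $\|S\|_{H^\infty_v\to H^\infty_v}\leq 1$ and $\|T\|_{\cE^\infty_v\to H^\infty_v}$), and parts (iii)--(iv) by the analogous argument with Proposition \ref{P_C1}(ii). The only difference is that you verify the two factorizations explicitly (including their identification at the origin), which the paper uses without comment; this is a harmless, indeed welcome, addition.
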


\begin{proof}
	(i) Suppose that $T_g\colon H^\infty_v\to H^\infty_v$ is continuous. By Proposition \ref{P_C1}(i) also $V_g=S\circ T_g\colon H^\infty_v\to \cE^\infty_v\subseteq H^\infty_v$ is continuous.
	
	Conversely, suppose that  $V_g\colon H^\infty_v\to H^\infty_v$ is continuous. Clearly, $V_g(H^\infty_v)\subseteq \cE^\infty_v$ and so  Proposition \ref{P_C1}(i) implies that  $T_g=T\circ V_g\colon H^\infty_v\to H^\infty_v$ is continuous.
	
	(ii) The assumption on $V_g$ ensures that  $T_g\colon H^\infty_v\to H^\infty_v$ is also continuous; see part (i).
	
	 Let $h\in [V_g, H^\infty_v]$. Then $h\in H(\D)$ and $V_gh\in H^\infty_v$. Since $(V_gh)(0)=0$, it follows that actually $V_gh\in \cE^\infty_v$. So, $h\in H(\D)$ and $T_gh=(T\circ V_g)h\in H^\infty_v$, after recalling that $T\in \cL(\cE^\infty_v, H^\infty_v)$ by Proposition \ref{P_C1}(i). Moreover,
	 \begin{align*}
	 \|h\|_{[T_g,H^\infty_v]}&:=\|T_gh\|_{\infty,v}=\|(T\circ V_g)h\|_{\infty,v}\leq \|T\|_{\cE^\infty_v\to H^\infty_v}\|V_gh\|_{\infty,v}\\
	 &= \|T\|_{\cE^\infty_v\to H^\infty_v}\|h\|_{[V_g,H^\infty_v]}<\infty,
	 \end{align*}
 that is, $h\in [T_g,H^\infty_v]$. Accordingly, $[V_g,H^\infty_v]\subseteq [T_g,H^\infty_v]$ with a continuous inclusion.

	 Let $f\in [T_g, H^\infty_v]$. Then $f\in H(\D)$ and $T_gf\in H^\infty_v$. By Proposition \ref{P_C1}(i) it follows that $V_gf=(S\circ T_g)f\in H^\infty_v$. Accordingly, $f\in [V_g,H^\infty_v]$. Moreover,
	 \begin{align*}
	 \|f\|_{[V_g,H^\infty_v]}&=\|V_gf\|_{\infty,v}=\|(S\circ T_g)f\|_{\infty,v}\leq \|S\|_{H^\infty_v\to \cE^\infty_v}\|T_gf\|_{\infty,v}\\
	 &= \|S\|_{H^\infty_v\to \cE^\infty_v}\|f\|_{[T_g,H^\infty_v]}<\infty,
	 \end{align*}
 that is, $f\in [V_g, H^\infty_v]$. Accordingly, $[T_g, H^\infty_v]\subseteq [V_g, H^\infty_v]$ with a continuous inclusion.

It follows that $[T_g, H^\infty_v]=[V_g, H^\infty_v]$	 as linear spaces of $H(\D)$ and that the norms $\|\cdot\|_{[T_g,H^\infty_v]}$ and $\|\cdot\|_{[V_g,H^\infty_v]}$ are \textit{equivalent}.

	In view of Proposition \ref{P_C1}(ii),  the proofs of parts (iii) and (iv) are similar.
\end{proof}

Let us consider a particular case. Define $g_0(z):=-{\rm Log}(1-z)$, for $z\in\D$. Then, $g_0'(z)=\frac{1}{1-z}$, for $z\in\D$, and  hence, $g\in \cB$. Moreover, for each $f\in H(\D)$, the operator $T_{g_0}$ is given by $(T_{g_0}f)(0)=f(0):=(Cf)(0)$ and
\begin{equation}\label{eq.CesaroO}
(T_{g_0})f(z)=\frac{1}{z}\int_0^z\frac{f(\xi)}{1-\xi}\,d\xi=:(Cf)(z),\ z\in\D\setminus\{0\},
\end{equation}
that is, $T_{g_0}=C$ is the classical  \textit{Cesàro operator}. We also point out that
\begin{equation}\label{eq.Volt_C}
(V_{g_0}f)(z)=\int_0^z\frac{f(\xi)}{1-\xi}\,d\xi,\quad z\in\D.
\end{equation}
Aleman and Persson have made an extensive investigation of
various properties of generalized Ces\`aro operators
acting in a large class of Banach spaces of analytic functions on $\D$, \cite{AP,AleSi1,P}.
In particular, their results apply to the classical Ces\`aro operator $C$ given by \eqref{eq.CesaroO} when it acts in the Korenblum growth spaces $A^{-\gamma}$ and $A^{-\gamma}_0$ for $\gamma > 0$.
Additional results which complement and extend their work can be found in \cite{ABR-R}.

For a detailed investigation of the optimal domain spaces $[C,H^p]$, for $1\leq p<\infty$, we refer to \cite{CR}. We now turn our attention to the setting when the $H^p$-spaces are replaced by the Korenblum spaces.
Note, whenever $g\in \cB$ is non-constant, that both of the optimal domain spaces $[V_{g_0}, A^{-\gamma}]$ and  $[V_{g_0}, A^{-\gamma}_0]$ are Banach spaces (cf. Proposition \ref{P.D1}). In view of Proposition \ref{P.Ugu}(ii), (iv), also $[C,A^{-\gamma}]=[T_{g_0}, A^{-\gamma}]$ and  $[C,A^{-\gamma}_0]=[T_{g_0}, A^{-\gamma}_0]$ are Banach spaces (see also \cite{ABR-R}). The following result is a consequence of Theorems 3.1 and 3.2 in \cite{ABR-R}.

\begin{prop}\label{P.DomainC} Let $\gamma>0$ and $g_0(z)=-{\rm Log}(1-z)$ for $z\in\D$. In each of the following cases the optimal domain space is genuinely larger than the original domain space.
	\begin{itemize}
		\item[\rm (i)] $A^{-\gamma}\subsetneqq [C,A^{-\gamma}]$ and $A^{-\gamma}\subsetneqq [V_{g_0},A^{-\gamma}]$.
		\item[\rm (ii)] $A^{-\gamma}_0\subsetneqq [C,A_0^{-\gamma}]$ and $A^{-\gamma}_0\subsetneqq [V_{g_0},A_0^{-\gamma}]$.
	\end{itemize}
	\end{prop}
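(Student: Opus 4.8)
The plan is to reduce all four inclusions to a single application of Example~\ref{E1}. First I would record that $g_0'(z)=\frac{1}{1-z}$, so that $|g_0'|>0$ on $\D$ and $\frac{1}{g_0'}(z)=1-z\in H^\infty$; since $\sup_{z\in\D}(1-|z|)|g_0'(z)|=\sup_{z\in\D}\frac{1-|z|}{|1-z|}\leq 1$ we also have $g_0\in\cB$, so $g_0$ meets the standing hypotheses of Example~\ref{E1}. By Proposition~\ref{P.Con} the operators $V_{g_0}$ are continuous on both $A^{-\gamma}$ and $A^{-\gamma}_0$, and since $C=T_{g_0}$ by \eqref{eq.CesaroO}, Proposition~\ref{P.Ugu}(ii),(iv) applied to the weight $v(r):=(1-r)^\gamma$ (for which $H^\infty_v=A^{-\gamma}$ and $H^0_v=A^{-\gamma}_0$) yields $[C,A^{-\gamma}]=[V_{g_0},A^{-\gamma}]$ and $[C,A^{-\gamma}_0]=[V_{g_0},A^{-\gamma}_0]$ as linear spaces in $H(\D)$. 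Because these optimal domains coincide as sets, any function witnessing properness for $V_{g_0}$ witnesses it for $C$ as well; hence it suffices to establish the two properness statements for $V_{g_0}$ alone.

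For part (i) I would take $w=-1$ in Example~\ref{E1} and verify its hypothesis via $\lim_{r\to 1^-}|g_0'(-r)|(1-r)=\lim_{r\to 1^-}\frac{1-r}{1+r}=0$; the example then delivers $A^{-\gamma}\subsetneqq[V_{g_0},A^{-\gamma}]$, and therefore $A^{-\gamma}\subsetneqq[C,A^{-\gamma}]$. For part (ii) I would invoke the second assertion of Example~\ref{E1}, whose extra requirement $|g_0'(rw)|(1-r)^{1/2}\leq c$ holds with $w=-1$ and $c=1$, since $|g_0'(-r)|(1-r)^{1/2}=\frac{(1-r)^{1/2}}{1+r}\leq 1$ for every $r\in(0,1)$. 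The example then gives $A^{-\gamma}_0\subsetneqq[V_{g_0},A^{-\gamma}_0]$, and hence $A^{-\gamma}_0\subsetneqq[C,A^{-\gamma}_0]$.

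I expect no genuine obstacle here, because Example~\ref{E1} already carries out the explicit construction of the witnessing functions $f$ and $f_0$ that lie in $[V_{g_0},A^{-\gamma}]\setminus A^{-\gamma}$ and $[V_{g_0},A^{-\gamma}_0]\setminus A^{-\gamma}_0$, respectively. The only step demanding care is confirming that Proposition~\ref{P.Ugu} is applicable---namely the continuity of $V_{g_0}$ on the relevant spaces together with the identity $C=T_{g_0}$---since it is precisely the resulting set-theoretic identification of the optimal domains that permits transferring properness from $V_{g_0}$ to $C$ with no additional work.
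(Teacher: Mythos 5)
Your proposal is correct, and every verification in it checks out: $g_0\in\cB$ with $|g_0'|>0$ and $\tfrac{1}{g_0'}=1-z\in H^\infty$; the hypothesis of Example~\ref{E1} holds at $w=-1$ since $|g_0'(-r)|(1-r)=\tfrac{1-r}{1+r}\to 0$; and the additional condition $|g_0'(-r)|(1-r)^{1/2}=\tfrac{(1-r)^{1/2}}{1+r}\leq 1$ activates the second assertion of that example. However, your route is genuinely different from the paper's. The paper quotes Theorems 3.1 and 3.2 of the earlier work \cite{ABR-R} to assert directly that $A^{-\gamma}\neq[C,A^{-\gamma}]$ and $A_0^{-\gamma}\neq[C,A_0^{-\gamma}]$, and then uses Proposition~\ref{P.Ugu}(ii),(iv) to transfer these facts \emph{from $C$ to $V_{g_0}$}; you run the same transfer mechanism in the opposite direction, \emph{from $V_{g_0}$ to $C$}, sourcing the properness instead from the paper's own Example~\ref{E1} (which Example~\ref{E1} itself flags as applying to $g_0$ with $w=-1$). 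What your version buys is self-containedness and explicitness: the witnesses are concrete functions, namely $f(z)=\tfrac{1-z}{(1+z)^{\gamma+1}}$ for part (i) and $f_0(z)=(1-z)^{1/2}f(z)$ for part (ii), constructed via Proposition~\ref{P.Description}, with no appeal to outside literature. What the paper's version buys is brevity and a direct link to the known structure theory of $[C,A^{-\gamma}]$ developed in \cite{ABR-R}, which is also used elsewhere in Section~5 (Propositions~\ref{I} and \ref{J}). Both arguments rest on Proposition~\ref{P.Ugu} for the identification $[C,A^{-\gamma}]=[V_{g_0},A^{-\gamma}]$ and $[C,A^{-\gamma}_0]=[V_{g_0},A^{-\gamma}_0]$, so the only structural difference is which operator carries the burden of the properness proof.
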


\begin{proof} It is already known that $A^{-\gamma}\not=[C,A^{-\gamma}]$ and $A^{-\gamma}_0\not=[C,A_0^{-\gamma}]$; see  \cite[Theorems 3.1 and 3.2]{ABR-R}. So, by Proposition \ref{P.Ugu}(ii), (iv) we can conclude that also $A^{-\gamma}\not=[V_{g_0},A^{-\gamma}]$ and $A^{-\gamma}_0\not=[V_{g_0},A_0^{-\gamma}]$. For the stated inclusions we refer to Proposition \ref{P1}(iii).
	\end{proof}

The following result, \cite[Proposition 3.4]{ABR-R}, shows that the largest Korenblum space $A^{-\beta}$ that is contained in $[C,A^{-\gamma}]$ is $A^{-\gamma}$. The same is true for $A^{-\beta}_0$ and $[C,A^{-\gamma}_0]$. This property is particular for $C=T_{g_0}$ and is not valid for $T_g$ for general functions $g\in\cB$; see Proposition \ref{P.Ugu}(ii), (iv) and Example \ref{E2}(iii).

\begin{prop}\label{I} Let $\gamma>0$. For each $\beta>\gamma$, the space $A^{-\beta}\nsubseteq [C,A^{-\gamma}]$ and the space $A^{-\beta}_0\nsubseteq [C,A^{-\gamma}_0]$.
\end{prop}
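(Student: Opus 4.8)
The plan is to exhibit a single explicit function $f$ that lies in $A^{-\beta}_0$ (hence also in $A^{-\beta}$) but fails to belong to $[C,A^{-\gamma}]$ (hence also fails to belong to $[C,A^{-\gamma}_0]\su[C,A^{-\gamma}]$). The first step is to convert membership in the optimal domain into a concrete pointwise condition. Since $g_0\in\cB$ is non-constant with $g_0'(z)=\frac1{1-z}$ and $V_{g_0}\colon A^{-\gamma}\to A^{-\gamma}$ is continuous (Proposition \ref{P.Con}), Proposition \ref{P.Ugu}(ii),(iv) gives $[C,A^{-\gamma}]=[V_{g_0},A^{-\gamma}]$ and $[C,A^{-\gamma}_0]=[V_{g_0},A^{-\gamma}_0]$, and then Proposition \ref{P.Description} identifies these as
\[
[C,A^{-\gamma}]=\Big\{f\in H(\D):\ \tfrac{f}{1-z}\in A^{-(\gamma+1)}\Big\},\qquad
[C,A^{-\gamma}_0]=\Big\{f\in H(\D):\ \tfrac{f}{1-z}\in A^{-(\gamma+1)}_0\Big\}.
\]
Thus it suffices to produce one $f\in A^{-\beta}_0$ with $\frac{f}{1-z}\notin A^{-(\gamma+1)}$.

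Next I would fix any exponent $\wt\beta$ with $\gamma<\wt\beta<\beta$ and set $f(z):=(1-z)^{-\wt\beta}$, which is analytic on $\D$. Using $|1-z|\ge 1-|z|$ one checks immediately that $\sup_{z\in\D}(1-|z|)^{\wt\beta}|f(z)|\le 1$, so $f\in A^{-\wt\beta}$; since $\wt\beta<\beta$, the (routine) inclusion $A^{-\wt\beta}\su A^{-\beta}_0$ recalled in Section 3 yields $f\in A^{-\beta}_0\su A^{-\beta}$. On the other hand $\frac{f(z)}{1-z}=(1-z)^{-(\wt\beta+1)}$, and evaluating along the radius $z=r\in(0,1)$ gives $(1-|z|)^{\gamma+1}\big|\frac{f(z)}{1-z}\big|=(1-r)^{\gamma-\wt\beta}$, which tends to $+\infty$ as $r\to 1^-$ because $\gamma-\wt\beta<0$. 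Hence $\frac{f}{1-z}\notin A^{-(\gamma+1)}$, so $f\notin[C,A^{-\gamma}]$; and since $A^{-\gamma}_0\su A^{-\gamma}$ forces $[C,A^{-\gamma}_0]\su[C,A^{-\gamma}]$, also $f\notin[C,A^{-\gamma}_0]$. As $f\in A^{-\beta}_0\su A^{-\beta}$, this simultaneously establishes both $A^{-\beta}\nsubseteq[C,A^{-\gamma}]$ and $A^{-\beta}_0\nsubseteq[C,A^{-\gamma}_0]$.

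There is no serious obstacle here once the reduction is in place; the only point requiring care is the choice of the exponent. The naive choice $f(z)=(1-z)^{-\beta}$ lies in $A^{-\beta}$ but not in the little-oh space $A^{-\beta}_0$ (radially $(1-|z|)^\beta|f|\to 1$), so it would settle only the first assertion. Taking $\wt\beta$ strictly between $\gamma$ and $\beta$ is exactly what pushes $f$ into $A^{-\beta}_0$ while still keeping the radial blow-up rate $(1-r)^{\gamma-\wt\beta}\to\infty$ needed to defeat membership in $A^{-(\gamma+1)}$; this single function then handles both parts at once.
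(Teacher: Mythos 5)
Your proof is correct. Note, however, that this paper contains no internal proof of Proposition \ref{I}: the statement is simply quoted from \cite[Proposition 3.4]{ABR-R}, so there is no argument of the paper's own to compare against. Your argument is a legitimate, self-contained derivation built entirely from the paper's machinery: Proposition \ref{P.Con} gives $g_0\in\cB$ and continuity of $V_{g_0}$ on $A^{-\gamma}$ and $A^{-\gamma}_0$; Proposition \ref{P.Ugu}(ii),(iv) (applicable since $v(r)=(1-r)^\gamma$ is an admissible weight with $v(r)\to 0$) identifies $[C,A^{-\gamma}]=[V_{g_0},A^{-\gamma}]$ and $[C,A^{-\gamma}_0]=[V_{g_0},A^{-\gamma}_0]$; and Proposition \ref{P.Description} converts membership into the pointwise condition $f/(1-z)\in A^{-(\gamma+1)}$ (resp. $A^{-(\gamma+1)}_0$). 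The test function $f(z)=(1-z)^{-\wt\beta}$ with $\gamma<\wt\beta<\beta$ then does everything: $|1-z|\geq 1-|z|$ puts $f$ in $A^{-\wt\beta}$, the routine inclusion $A^{-\wt\beta}\su A^{-\beta}_0$ recalled in Section 3 puts it in $A^{-\beta}_0\su A^{-\beta}$, and the radial computation $(1-r)^{\gamma+1}(1-r)^{-\wt\beta-1}=(1-r)^{\gamma-\wt\beta}\to\infty$ excludes it from $[C,A^{-\gamma}]\supseteq[C,A^{-\gamma}_0]$. Your closing remark is also the genuinely delicate point: the naive choice $(1-z)^{-\beta}$ fails to lie in $A^{-\beta}_0$, so it would prove only the first assertion; strictly lowering the exponent to $\wt\beta$ is what lets a single function settle both claims at once.
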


The next result, \cite[Proposition 3.6]{ABR-R}, shows that the Banach spaces of analytic functions $A^{-\gamma}$ and $[C,A^{-\gamma}_0]$ on $\D$ are two \textit{non-comparable, proper} linear subspaces of the optimal domain space $[C,A^{-\gamma}]$. In particular, $C$ maps both of  these spaces into $A^{-\gamma}$.

\begin{prop}\label{J} Let $\gamma>0$. Both $A^{-\gamma}$ and $[C,A^{-\gamma}_0]$ are proper subspaces of $[C,A^{-\gamma}]$, with $[C,A^{-\gamma}_0]$ being closed. Moreover, $A^{-\gamma}$ and $[C,A^{-\gamma}_0]$ are non-comparable. That is,
	\[
	A^{-\gamma} \nsubseteq [C,A^{-\gamma}_0]\ \mbox{ and }\ [C,A^{-\gamma}_0]\nsubseteq A^{-\gamma}.
	\]
\end{prop}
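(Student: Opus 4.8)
The plan is to reduce everything to two explicit descriptions of the optimal domains coming from the earlier results, and then to exhibit two concrete test functions. Since $C=T_{g_0}$ with $g_0(z)=-{\rm Log}(1-z)$ and $g_0'(z)=\frac{1}{1-z}$, Proposition \ref{P.Ugu}(ii),(iv) gives $[C,A^{-\gamma}]=[V_{g_0},A^{-\gamma}]$ and $[C,A^{-\gamma}_0]=[V_{g_0},A^{-\gamma}_0]$ with equivalent norms, after which Proposition \ref{P.Description} (applied to the non-constant $g_0\in\cB$) yields the working descriptions
\[
f\in[C,A^{-\gamma}]\iff \tfrac{f}{1-z}\in A^{-(\gamma+1)},\qquad f\in[C,A^{-\gamma}_0]\iff \tfrac{f}{1-z}\in A^{-(\gamma+1)}_0.
\]
These are the tools I would use throughout, since membership then reduces to estimating the single quantity $(1-|z|)^{\gamma+1}\frac{|f(z)|}{|1-z|}$ on $\D$.

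The structural claims are quick. That $[C,A^{-\gamma}_0]$ is a \emph{closed} subspace of $[C,A^{-\gamma}]$ follows at once from Proposition \ref{P1}(v), since $A^{-\gamma}_0$ is a closed, $C$-invariant subspace of $A^{-\gamma}$; the strict inclusion $A^{-\gamma}\subsetneqq[C,A^{-\gamma}]$ is exactly Proposition \ref{P.DomainC}(i).

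For the two non-comparability statements I would produce explicit witnesses, both analytic on $\D$ as their singularities lie on $\partial\D$. For $A^{-\gamma}\nsubseteq[C,A^{-\gamma}_0]$ take $f_1(z):=(1-z)^{-\gamma}$: the bound $|1-z|\geq 1-|z|$ gives $f_1\in A^{-\gamma}$, while $\frac{f_1}{1-z}=(1-z)^{-(\gamma+1)}$ satisfies $(1-|z|)^{\gamma+1}\frac{|f_1(z)|}{|1-z|}\equiv 1$ along the real radius, so $\frac{f_1}{1-z}\notin A^{-(\gamma+1)}_0$, i.e.\ $f_1\notin[C,A^{-\gamma}_0]$. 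Since $f_1\in A^{-\gamma}\subseteq[C,A^{-\gamma}]$ (Proposition \ref{P1}(iii)), this single function simultaneously establishes $A^{-\gamma}\nsubseteq[C,A^{-\gamma}_0]$ and the properness $[C,A^{-\gamma}_0]\subsetneqq[C,A^{-\gamma}]$. For the reverse incomparability $[C,A^{-\gamma}_0]\nsubseteq A^{-\gamma}$ I would take $f_2(z):=(1+z)^{-(\gamma+1/2)}$, whose singularity sits at $z=-1$: along the radius to $-1$ one gets $(1-|z|)^\gamma|f_2(z)|=(1-r)^{-1/2}\to\infty$, so $f_2\notin A^{-\gamma}$.

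The main obstacle is verifying that nonetheless $f_2\in[C,A^{-\gamma}_0]$, that is,
\[
(1-|z|)^{\gamma+1}\frac{|f_2(z)|}{|1-z|}=\frac{(1-|z|)^{\gamma+1}}{|1+z|^{\gamma+1/2}\,|1-z|}\longrightarrow 0\quad(|z|\to 1^-).
\]
The point is that the blow-up of $f_2$ is concentrated near $z=-1$, precisely where the factor $\frac{1-|z|}{|1-z|}$ implicit in this expression is small, since there $|1-z|\approx 2$. Concretely I would split $\D$ into the region $|1-z|\geq 1$, where $|1+z|\geq 1-|z|$ yields the bound $(1-|z|)^{1/2}$, and the region $|1-z|<1$, where $|1+z|\geq 2-|1-z|\geq 1$ together with $|1-z|\geq 1-|z|$ yields the bound $(1-|z|)^{\gamma}$; both tend to $0$ as $|z|\to 1^-$. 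This two-region estimate is the only genuine computation, and it completes the proof.
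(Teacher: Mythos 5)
Your proof is correct, and it is genuinely different in character from what the paper does: the paper does not prove Proposition \ref{J} at all, but imports it as \cite[Proposition 3.6]{ABR-R}, adding only the remark that the closedness of $[C,A^{-\gamma}_0]$ in $[C,A^{-\gamma}]$ is a special case of Proposition \ref{P1}(v). You instead give a self-contained argument built entirely from the paper's own machinery: the identification $[C,A^{-\gamma}]=[V_{g_0},A^{-\gamma}]$ and $[C,A^{-\gamma}_0]=[V_{g_0},A^{-\gamma}_0]$ from Proposition \ref{P.Ugu}(ii),(iv), the membership criterion $f\in[C,A^{-\gamma}]\iff \frac{f}{1-z}\in A^{-(\gamma+1)}$ (resp.\ $A^{-(\gamma+1)}_0$) from Proposition \ref{P.Description}, Proposition \ref{P1}(v) for closedness, Proposition \ref{P.DomainC}(i) for $A^{-\gamma}\subsetneqq[C,A^{-\gamma}]$, and two explicit witnesses for the non-comparability. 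The witnesses check out: $f_1(z)=(1-z)^{-\gamma}$ lies in $A^{-\gamma}$ since $|1-z|\geq 1-|z|$, yet $(1-|z|)^{\gamma+1}|f_1(z)|/|1-z|\equiv 1$ on the radius to $1$, so $f_1\notin[C,A^{-\gamma}_0]$, settling both $A^{-\gamma}\nsubseteq[C,A^{-\gamma}_0]$ and the properness of $[C,A^{-\gamma}_0]$ in $[C,A^{-\gamma}]$; and $f_2(z)=(1+z)^{-(\gamma+1/2)}$ blows up like $(1-r)^{-1/2}$ at $z=-1$, so $f_2\notin A^{-\gamma}$, while your two-region estimate (using $|1+z|\geq 1-|z|$ when $|1-z|\geq 1$, and $|1+z|\geq 2-|1-z|\geq 1$ together with $|1-z|\geq 1-|z|$ when $|1-z|<1$) correctly bounds $(1-|z|)^{\gamma+1}|f_2(z)|/|1-z|$ by $\max\{(1-|z|)^{1/2},(1-|z|)^{\gamma}\}\to 0$, giving $f_2\in[C,A^{-\gamma}_0]\setminus A^{-\gamma}$. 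What your route buys is transparency and independence from \cite{ABR-R}: every claim is verified by an elementary estimate against concrete functions whose singularities are placed at $z=1$ (where the kernel $\frac{1}{1-z}$ of $V_{g_0}$ is large) and $z=-1$ (where it is harmless), which also makes visible \emph{why} the two spaces are non-comparable; the cost is only the short computation you already carried out.
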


Observe that the closedness of $[C,A^{-\gamma}_0]$ in $[C,A^{-\gamma}]$ is a special case of Proposition \ref{P1}(v) for $T=C$ and with $X=A^{-\gamma}$ and $Y=A^{-\gamma}_0$.

\begin{prop}\label{P.Notclosed} Let $\gamma>0$ and $g_0(z)=-{\rm Log}(1-z)$ for $z\in\D$.
	\begin{itemize}
		\item[\rm (i)] Both of the Cesàro operators $C\colon A^{-\gamma}\to A^{-\gamma}$ and $C\colon A^{-\gamma}_0\to A^{-\gamma}_0$ fail to have closed range.
		\item[\rm (ii)] Both of the operators $V_{g_0}\colon A^{-\gamma}\to A^{-\gamma}$ and $V_{g_0}\colon A^{-\gamma}_0\to A^{-\gamma}_0$ fail to have closed range.
	\end{itemize}
	\end{prop}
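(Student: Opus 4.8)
The plan is to reduce both statements to the single operator $V_{g_0}$ and to the soft criterion of Proposition~\ref{P5}. Since $g_0'(z)=\frac{1}{1-z}$, one checks immediately that $g_0\in\cB$, that $|g_0'|>0$ on $\D$, and that $\frac{1}{g_0'}=1-z\in H^\infty$; moreover $T_{g_0}=C$. Hence, by Proposition~\ref{P.Ugu}(ii),(iv) (applicable because $V_{g_0}$ is continuous on $A^{-\gamma}$ and on $A^{-\gamma}_0$ by Proposition~\ref{P.Con}), the optimal domains coincide, $[C,A^{-\gamma}]=[V_{g_0},A^{-\gamma}]$ and $[C,A^{-\gamma}_0]=[V_{g_0},A^{-\gamma}_0]$, in each case with equivalent norms. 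Because passing to an equivalent norm does not change whether a subspace is closed, Proposition~\ref{P5} shows that $C$ and $V_{g_0}$ have closed range on $A^{-\gamma}$ (resp.\ on $A^{-\gamma}_0$) simultaneously. Thus it suffices to prove failure of closed range for $V_{g_0}$ on each of $A^{-\gamma}$ and $A^{-\gamma}_0$, and then (i) and (ii) follow together.

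First I would treat the little space $A^{-\gamma}_0$. The hypotheses of Proposition~\ref{P5} hold for $T=V_{g_0}$, $X=A^{-\gamma}_0$, since $V_{g_0}$ is injective on $H(\D)$, maps $A^{-\gamma}_0$ into itself (Proposition~\ref{P.Con}), and has Banach optimal domain (Proposition~\ref{P.D1}); so $V_{g_0}\colon A^{-\gamma}_0\to A^{-\gamma}_0$ has closed range if and only if $A^{-\gamma}_0$ is closed in $[V_{g_0},A^{-\gamma}_0]$. Now $g_0$ satisfies the hypotheses of Proposition~\ref{P.DensePol}, so the polynomials $\cP$ are dense in $[V_{g_0},A^{-\gamma}_0]$; as $\cP\subseteq A^{-\gamma}_0$, the subspace $A^{-\gamma}_0$ is itself dense in $[V_{g_0},A^{-\gamma}_0]$. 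On the other hand Proposition~\ref{P.DomainC}(ii) gives the proper inclusion $A^{-\gamma}_0\subsetneqq[V_{g_0},A^{-\gamma}_0]$. A dense proper subspace cannot be closed, so $A^{-\gamma}_0$ is not closed in $[V_{g_0},A^{-\gamma}_0]$, and Proposition~\ref{P5} then forces $V_{g_0}$ to fail to have closed range on $A^{-\gamma}_0$.

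The main obstacle is the sup-normed space $A^{-\gamma}$, where this density trick breaks down: the polynomials are dense only in $A^{-\gamma}_0$, not in $A^{-\gamma}$, so I cannot argue that $A^{-\gamma}$ is dense in $[V_{g_0},A^{-\gamma}]$. I would instead argue by contradiction, exploiting the inclusion $A^{-\gamma}_0\subseteq A^{-\gamma}$ with its identical norm. Suppose $V_{g_0}\colon A^{-\gamma}\to A^{-\gamma}$ had closed range. Since $V_{g_0}$ is injective and continuous, the open mapping theorem yields $c>0$ with $\|V_{g_0}f\|_{-\gamma}\geq c\|f\|_{-\gamma}$ for every $f\in A^{-\gamma}$, i.e.\ $V_{g_0}$ is bounded below on $A^{-\gamma}$. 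Restricting this inequality to $f\in A^{-\gamma}_0$ shows that $V_{g_0}$ is bounded below, hence has closed range, on $A^{-\gamma}_0$, contradicting the previous paragraph. Therefore $V_{g_0}$ fails to have closed range on $A^{-\gamma}$ as well. Finally, combining both cases with the norm-equivalent identifications $[C,A^{-\gamma}]=[V_{g_0},A^{-\gamma}]$ and $[C,A^{-\gamma}_0]=[V_{g_0},A^{-\gamma}_0]$ and invoking Proposition~\ref{P5} once more transfers the conclusion to $C$, yielding both (i) and (ii).
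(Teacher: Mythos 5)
Your proof is correct, but its core runs along a genuinely different track than the paper's. The paper proves (i) directly for the Ces\`aro operator: using the explicit formula $C^{-1}(z^n)=(n+1)(1-z)z^n$ it shows that $C(A^{-\gamma}_0)$ contains the polynomials and is therefore dense in $A^{-\gamma}_0$, and then derives a contradiction from the external spectral fact that $0\in\sigma(C;A^{-\gamma}_0)$ (quoted from Persson and Aleman--Persson), since a closed dense range would make $C$ a surjective isomorphism; part (ii) is then deduced from (i) via the norm equivalence $[C,X]=[V_{g_0},X]$ of Proposition \ref{P.Ugu} together with Proposition \ref{P5}. You run the reduction in the opposite direction, establishing failure of closed range for $V_{g_0}$ first and transferring to $C$ afterwards, and, more importantly, you replace the spectral ingredient by a purely optimal-domain one: polynomials are dense in $[V_{g_0},A^{-\gamma}_0]$ by Proposition \ref{P.DensePol} (the hypotheses $|g_0'|>0$ and $1/g_0'\in H^\infty$ do hold for $g_0$), hence $A^{-\gamma}_0$ is dense there, while Proposition \ref{P.DomainC}(ii) makes it a proper subspace; a proper dense subspace cannot be closed, so Proposition \ref{P5} gives the claim. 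Both proofs then share the same two mechanisms: the bounded-below/open-mapping restriction argument to pass from $A^{-\gamma}_0$ to $A^{-\gamma}$, and the Proposition \ref{P.Ugu} plus Proposition \ref{P5} transfer between the two operators. What your route buys is the complete avoidance of spectral theory, staying inside the paper's own optimal-domain machinery (the only external input being the properness $A^{-\gamma}_0\neq[C,A^{-\gamma}_0]$ behind Proposition \ref{P.DomainC}, which comes from \cite{ABR-R}); what the paper's route buys is independence from Propositions \ref{P.DensePol} and \ref{P.DomainC}, resting instead on concrete information about $C$ itself. There is no circularity in your argument, since Propositions \ref{P.DensePol} and \ref{P.DomainC} are proved before and independently of this proposition.
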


\begin{proof}
	(i) The Cesàro operator $C\in \cL(H(\D))$ is an isomorphism and satisfies
	\begin{equation}\label{Nn}
	C^{-1}(z^n)=(n+1)(1-z)z^n,\quad n\in\N_0.
	\end{equation}
Indeed, for fixed $n\in\N_0$, the function $f_n(z):=(n+1)(1-z)z^n$, for $z\in\D$, satisfies
\[
(Cf_n)(z)=\frac{1}{z}\int_0^z\frac{(n+1)(1-\xi)\xi^n}{1-\xi}\,d\xi=z^n,\quad n\in\N_0,
\]
from which \eqref{Nn} follows. It is clear from \eqref{Nn} that the range $C(A^{-\gamma}_0)\subseteq A^{-\gamma}_0$ contains the polynomials and hence, it is dense in $A_0^{-\gamma}$.
	Suppose that  $C\colon A^{-\gamma}_0\to A^{-\gamma}_0$ has closed range, that is,  $C(A^{-\gamma}_0)$ is a closed subspace of $A_0^{-\gamma}$. It follows that $C(A^{-\gamma}_0)= A^{-\gamma}_0$ and hence, $C\colon A^{-\gamma}_0\to A^{-\gamma}_0$ is a surjective isomorphism. This is a contradiction because $0$ belongs to the spectrum $\sigma(C;A^{-\gamma}_0)$; see \cite[Theorems 4.1, 5.1 and Corollaries 2.1, 5.1]{P},  \cite[Theorem 4.1]{AP}. So, $C\in \cL(A^{-\gamma}_0)$ fails to have closed range.
	
	Suppose now that $C\colon A^{-\gamma}\to A^{-\gamma}$ has closed range. By the open mapping theorem it follows that there exists $D>0$ such that $\|Cf\|_{-\gamma}\geq D\|f\|_{-\gamma}$ for all $f\in A^{-\gamma}$. Since $A^{-\gamma}_0\subseteq A^{-\gamma}$, this implies that $\|Cf\|_{-\gamma}\geq D\|f\|_{-\gamma}$ for all $f\in A^{-\gamma}_0$ and hence, that $C\colon A^{-\gamma}_0\to A^{-\gamma}_0$ has closed range. A contradiction.
	
	(ii) First observe,  by Proposition \ref{P.Ugu}(ii), (iv),   that $[C,A^{-\gamma}]=[V_{g_0},A^{-\gamma}]$ and $[C,A^{-\gamma}_0]=[V_{g_0},A^{-\gamma}_0]$ as linear spaces and topologically. Moreover, as noted above, both $[C,A^{-\gamma}]$ and $[C,A^{-\gamma}_0]$ are Banach spaces. By part (i) combined with Proposition \ref{P5} (or with Corollary \ref{C1}) it  follows necessarily that $A^{-\gamma}$ ($A^{-\gamma}_0$, resp.) is \textit{not} a closed subspace of $[C,A^{-\gamma}]$ (resp. of $[C,A^{-\gamma}_0]$).  Accordingly, $A^{-\gamma}$ ($A^{-\gamma}_0$, resp.) is not a closed subspace of $[V_{g_0},A^{-\gamma}]$ (resp. of $[V_{g_0},A^{-\gamma}_0]$) either. Since $[V_{g_0},A^{-\gamma}]$ and $[V_{g_0},A^{-\gamma}_0]$ are Banach spaces, again by Proposition \ref{P5} we can conclude that both of  the operators $V_{g_0}\colon A^{-\gamma}\to A^{-\gamma}$ and $V_{g_0}\colon A^{-\gamma}_0\to A^{-\gamma}_0$ fail to have closed range.
\end{proof}

The following fact is known. We include a proof  for the sake of completeness.

\begin{lemma}\label{L.K} Let $(X,\|\cdot\|_X)$ be an infinite dimensional  Banach space and $T\in \cL(X)$ be injective and compact. Then  $T$ does not have closed range in $X$.
	\end{lemma}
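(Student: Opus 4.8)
The plan is to argue by contradiction, combining the open mapping theorem with Riesz's theorem, which characterizes finite-dimensional Banach spaces as precisely those in which the identity operator is compact. Suppose, contrary to the claim, that $T\colon X\to X$ has closed range. Then $T(X)$, being a closed subspace of the Banach space $X$, is itself a Banach space under the inherited norm. Since $T$ is injective, its corestriction $T\colon X\to T(X)$ is a continuous linear bijection, and the open mapping theorem then guarantees that this map is an isomorphism; in particular its inverse $T^{-1}\colon T(X)\to X$ is continuous.

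First I would verify that $T$ remains compact when viewed as an operator into $T(X)$. By hypothesis $T(B_X)$ is relatively compact in $X$, and since $T(X)$ is closed in $X$, the closure of $T(B_X)$ is contained in $T(X)$. Hence $T(B_X)$ is relatively compact in $T(X)$ as well, so $T\colon X\to T(X)$ is compact.

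Next I would form the composition $T^{-1}\circ T$, which is exactly the identity operator $I$ on $X$. As the composition of the compact operator $T\colon X\to T(X)$ with the bounded operator $T^{-1}\colon T(X)\to X$, this composition is compact. Therefore $I\colon X\to X$ is compact, which forces the closed unit ball $B_X$ to be relatively compact. By Riesz's theorem this occurs only when $X$ is finite dimensional, contradicting the standing hypothesis that $X$ is infinite dimensional. Consequently $T$ cannot have closed range.

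The argument is essentially routine, and the only points demanding a little care are the verification that $T$ stays compact after corestriction to its range (which rests squarely on the closedness of $T(X)$) and the legitimate application of the open mapping theorem, which requires $T(X)$ to be complete. Both of these are supplied precisely by the closed-range assumption, so the heart of the matter is simply to recognize that the identity $I = T^{-1}\circ T$ inherits compactness from $T$.
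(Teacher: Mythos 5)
Your proof is correct and takes essentially the same route as the paper's: argue by contradiction, use the open mapping theorem to make $T\colon X\to T(X)$ an isomorphism, and invoke Riesz's theorem on compactness versus finite dimension. The only cosmetic difference is that the paper derives the contradiction inside $T(X)$ (noting that $T(B_X)$ is a relatively compact $0$-neighbourhood of the Banach space $T(X)$, forcing $T(X)$ to be finite dimensional while injectivity makes it infinite dimensional), whereas you transport it back to $X$ by writing $I=T^{-1}\circ T$ and concluding that the identity on $X$ is compact; both hinge on exactly the same two theorems.
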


\begin{proof}
	Since $T\colon X\to X$ is injective, necessarily $T(X)$ is an infinite dimensional subspace of  $X$.
	Suppose that $T\in \cL(X)$ does have closed range. Then $T(X)$ is a closed subspace of $X$ and hence, by the open mapping theorem, the operator $T\colon X\to T(X)$ is an isomorphism. But, $T\in \cL(X)$ is compact. So, $T(B_X)\subseteq T(X)$ is a relatively compact $0$-neighbourhood of $T(X)$, which  implies that  $T(X)$ is finite dimensional. A contradiction.
\end{proof}

The function $g_0(z)=-{\rm Log}(1-z)$, for $z\in\D$, belongs to $\cB\setminus\cB_0$. The following result has similarities with Proposition \ref{P.DomainC}.

\begin{prop}\label{P.B0}Let $g\in \cB_0$ and   $\gamma>0$.
	\begin{itemize}
		\item[\rm (i)] Both of the operators $V_g\colon A^{-\gamma}\to A^{-\gamma}$ and $V_g\colon A^{-\gamma}_0\to A^{-\gamma}_0$ fail to  have closed range.
		\item[\rm (ii)] Each of the containments $A^{-\gamma}\subseteq [V_g, A^{-\gamma}]$ and  $A^{-\gamma}_0\subseteq [V_g, A^{-\gamma}_0]$ is proper. That is, the optimal domain space is genuinely larger than the original domain space.
			\item[\rm (iii)] Both of the operators $T_g\colon A^{-\gamma}\to A^{-\gamma}$ and $T_g\colon A^{-\gamma}_0\to A^{-\gamma}_0$ fail to  have closed range.
			\item[\rm (iv)] Each of the containments $A^{-\gamma}\subseteq [T_g, A^{-\gamma}]$ and  $A^{-\gamma}_0\subseteq [T_g, A^{-\gamma}_0]$ is proper. That is, the optimal domain space is genuinely larger than the original domain space.
		\end{itemize}
	\end{prop}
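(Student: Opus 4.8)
The plan is to derive everything from the compactness of $V_g$ (available precisely because $g\in\cB_0$), combined with the abstract dichotomy between closed range and proper containment recorded in Proposition \ref{P5} and Corollary \ref{C1}. Throughout I take $g$ to be non-constant, so that $V_g\in\cL(H(\D))$, and hence $T_g\in\cL(H(\D))$, is injective.

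For part (i), I would invoke Proposition \ref{P.Comp}: since $g\in\cB_0$, both $V_g\colon A^{-\gamma}\to A^{-\gamma}$ and $V_g\colon A^{-\gamma}_0\to A^{-\gamma}_0$ are compact. Each of $A^{-\gamma}$ and $A^{-\gamma}_0$ is infinite-dimensional (both contain all polynomials), and $V_g$ is injective on each, being the restriction of the injective operator $V_g\in\cL(H(\D))$. Lemma \ref{L.K} then shows at once that neither of these operators has closed range. For part (ii), I would feed part (i) into Corollary \ref{C1}(ii): the optimal domains $[V_g,A^{-\gamma}]$ and $[V_g,A^{-\gamma}_0]$ are Banach spaces by Proposition \ref{P.D1} and $V_g$ is injective, so the failure of closed range forces $A^{-\gamma}\subsetneqq[V_g,A^{-\gamma}]$ and $A^{-\gamma}_0\subsetneqq[V_g,A^{-\gamma}_0]$.

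For part (iii), the crucial point is that $T_g$ inherits compactness from $V_g$ through the factorization $T_g=T\circ V_g$ established in the proof of Proposition \ref{P.Ugu} (see also Proposition \ref{P_C1}). Identifying $A^{-\gamma}=H^\infty_v$ and $A^{-\gamma}_0=H^0_v$ with $v(r)=(1-r)^\gamma$, the operator $V_g$ maps its domain compactly into the closed subspace of functions vanishing at $0$ (namely $\cE^\infty_v$, resp.\ $\cE^0_v$, since $(V_gf)(0)=0$), while $T$ is bounded on that subspace by Proposition \ref{P_C1}. As the composition of a compact operator with a bounded one, $T_g$ is compact on both $A^{-\gamma}$ and $A^{-\gamma}_0$; being injective on an infinite-dimensional space, it fails to have closed range by Lemma \ref{L.K}. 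Finally, for part (iv) I would avoid re-running Corollary \ref{C1} and instead use Proposition \ref{P.Ugu}(ii),(iv): the continuity of $V_g$ gives $[T_g,A^{-\gamma}]=[V_g,A^{-\gamma}]$ and $[T_g,A^{-\gamma}_0]=[V_g,A^{-\gamma}_0]$ as linear subspaces of $H(\D)$, so the proper containments are immediate from part (ii).

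Since the individual invocations are all routine, the one step demanding genuine care is the compactness of $T_g$ in part (iii): one must verify that corestricting the compact operator $V_g$ to the closed subspace containing its range preserves compactness, and that $T$ really is bounded on $\cE^\infty_v$ (resp.\ $\cE^0_v$). Both are furnished by Proposition \ref{P_C1}, after which parts (iii)--(iv) mirror parts (i)--(ii) exactly.
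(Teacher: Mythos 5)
Your proposal is correct and follows essentially the same route as the paper's proof: compactness of $V_g$ via Proposition \ref{P.Comp} plus injectivity and Lemma \ref{L.K} for (i), the closed-range dichotomy of Proposition \ref{P5}/Corollary \ref{C1} for (ii), and the factorization $T_g=T\circ V_g$ with Proposition \ref{P_C1} (corestriction to $\cE^\infty_v$, resp. $\cE^0_v$) for (iii). Your part (iv) differs only cosmetically: you conclude via the identification $[T_g,A^{-\gamma}]=[V_g,A^{-\gamma}]$ from Proposition \ref{P.Ugu} together with part (ii), whereas the paper re-runs the Corollary \ref{C1} argument using part (iii); both are immediate given what precedes.
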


\begin{proof}
	(i) By Proposition \ref{P.Comp} both of the operators $V_g\colon A^{-\gamma}\to A^{-\gamma}$ and $V_g\colon A^{-\gamma}_0\to A^{-\gamma}_0$ are compact. On the other hand, the operators $V_g\colon A^{-\gamma}\to A^{-\gamma}$ and $V_g\colon A^{-\gamma}_0\to A^{-\gamma}_0$ are also injective. So, the result follows from Lemma \ref{L.K}.
	
	(ii) Since $[V_g, A^{-\gamma}]$ and  $[V_g, A^{-\gamma}_0]$ are Banach spaces, the result follows from part (i) combined with  Proposition \ref{P5} (or with Corollary \ref{C1}).
	
	(iii) Let $T$ be the operator given by \eqref{5B}. Since $T_g=T\circ V_g$ with $T\in \cL(\cE^\infty_{v}; A^{-\gamma})\cap \cL(\cE^0_{v}, A^{-\gamma}_0)$ (cf. Proposition \ref{P_C1} with $v(z):=(1-|z|)^\gamma$, for $z\in\D$, in which case $H^\infty_v=A^{-\gamma}$ and $H^0_v=A^{-\gamma}_0$) and both of the operators $V_g\colon A^{-\gamma}\to A^{-\gamma}$ and $V_g\colon A^{-\gamma}_0\to A^{-\gamma}_0$ are compact, also the operators $T_g\colon A^{-\gamma}\to A^{-\gamma}$ and $T_g\colon A^{-\gamma}_0\to A^{-\gamma}_0$ are compact. Moreover, both of the operators $T_g\colon A^{-\gamma}\to A^{-\gamma}$ and $T_g\colon A^{-\gamma}_0\to A^{-\gamma}_0$ are also injective. So, the result follows again from Lemma \ref{L.K}.
	
	(iv) Follows by arguing as in part (ii) and using part (iii).
\end{proof}

We conclude this section by giving some further classes of functions $g\in \cB$ for which the operators $V_g$ and $T_g$ fail to have closed range. In order to do this, we need some preliminary results.

\begin{lemma}\label{L.R1} Let $\gamma>0$ and $g\in\cB$ be non-constant.
	\begin{itemize}
		\item[\rm (i)] Suppose that   $V_g(A^{-\gamma})=\{f\in A^{-\gamma}:\ f(0)=0\}$. Then $A^{-\gamma}=[V_g, A^{-\gamma}]$.
		\item[\rm (ii)] Suppose that   $V_g(A^{-\gamma}_0)=\{f\in A^{-\gamma}_0:\ f(0)=0\}$. Then $A^{-\gamma}_0=[V_g, A^{-\gamma}_0]$.
		\end{itemize}
	\end{lemma}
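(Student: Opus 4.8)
The plan is to prove the two equalities by establishing only the non-trivial inclusions, since $A^{-\gamma}\subseteq[V_g,A^{-\gamma}]$ and $A^{-\gamma}_0\subseteq[V_g,A^{-\gamma}_0]$ already hold by Proposition \ref{P1}(iii). So for part (i) I would fix an arbitrary $f\in[V_g,A^{-\gamma}]$ and aim to show $f\in A^{-\gamma}$; part (ii) is then the same argument verbatim with $A^{-\gamma}_0$ in place of $A^{-\gamma}$.

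The decisive observation is that $V_gf\in A^{-\gamma}$ by the very definition of the optimal domain (Definition \ref{Def-B}), while $(V_gf)(0)=0$ for every $f\in H(\D)$; this is immediate from \eqref{eq.V} and was already noted in the text following it. Hence $V_gf$ lies in the subspace $\{h\in A^{-\gamma}:\ h(0)=0\}$, which by the hypothesis of (i) coincides with $V_g(A^{-\gamma})$. Consequently there exists $f_0\in A^{-\gamma}$ with $V_gf_0=V_gf$. The only property of the hypothesis actually used here is the inclusion $\{h\in A^{-\gamma}:\ h(0)=0\}\subseteq V_g(A^{-\gamma})$, i.e.\ that $V_g$ maps $A^{-\gamma}$ \emph{onto} the full codimension-one subspace of functions vanishing at the origin.

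The final step is to invoke the injectivity of $V_g\in\cL(H(\D))$ (cf.\ \cite{AND}): since $f,f_0\in H(\D)$ and $V_gf=V_gf_0$, we conclude $f=f_0\in A^{-\gamma}$. This yields $[V_g,A^{-\gamma}]\subseteq A^{-\gamma}$ and hence equality, proving (i). For (ii) the identical reasoning applies, using that $A^{-\gamma}_0$ is a Banach space of analytic functions on $\D$ with $V_g(A^{-\gamma}_0)\subseteq A^{-\gamma}_0$ and that $(V_gf)(0)=0$ again forces $V_gf$ into $\{h\in A^{-\gamma}_0:\ h(0)=0\}=V_g(A^{-\gamma}_0)$, followed by the same injectivity argument.

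I expect no genuine obstacle here: essentially all of the content is already packaged in the surjectivity-type hypothesis on the range of $V_g$, and the proof merely transports it to the optimal domain via the two structural facts that $V_g$ annihilates the value at $0$ and is injective on $H(\D)$. The slight subtlety worth stating explicitly is the matching of the value at $0$, which is what allows the range hypothesis to apply to the element $V_gf$ produced from an arbitrary member of the optimal domain.
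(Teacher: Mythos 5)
Your proof is correct and is essentially identical to the paper's: both fix $f\in[V_g,A^{-\gamma}]$, observe that $V_gf\in A^{-\gamma}$ with $(V_gf)(0)=0$, invoke the range hypothesis to write $V_gf=V_gh$ for some $h\in A^{-\gamma}$, and conclude $f=h$ by the injectivity of $V_g$ on $H(\D)$. Part (ii) is handled by the same argument in both cases.
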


\begin{proof}
	(i) We know that $A^{-\gamma}\subseteq [V_g, A^{-\gamma}]$.
	
	Let $f\in [V_g, A^{-\gamma}]$. Then $f\in H(\D)$ and $V_gf\in A^{-\gamma}$. Since $(V_gf)(0)=0$ (see \eqref{eq.V}), it follows that $V_gf\in \{u\in A^{-\gamma}:\ u(0)=0\}$ and so, by assumption, there exists $h\in A^{-\gamma}$ such that $V_gh=V_gf$. The injectivity of $V_g$ implies that $h=f$. Accordingly, $f\in A^{-\gamma}$.
	
	(ii) Follows by arguing as in part (i).
\end{proof}

\begin{lemma}\label{L.R2} Let $g\in\cB$ satisfy $|g'|>0$ on $\D$ and $\frac{1}{g'}\in H^\infty$.  Suppose, for $\gamma>0$,  that $V_g(A^{-\gamma}_0)=\{f\in A^{-\gamma}_0:\ f(0)=0\}$. Then $V_g(A^{-\gamma})=\{f\in A^{-\gamma}:\ f(0)=0\}$.
	\end{lemma}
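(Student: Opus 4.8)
The plan is to prove the non-trivial inclusion $\{f\in A^{-\gamma}:\ f(0)=0\}\subseteq V_g(A^{-\gamma})$, the reverse containment being immediate since $(V_gf)(0)=0$ for every $f$. First I would record that, because $|g'|>0$ on $\D$, the operator $V_g$ is injective on $H(\D)$ with the explicit left inverse $u\mapsto u'/g'$; indeed $V_g(u'/g')(z)=\int_0^z u'(\xi)\,d\xi=u(z)-u(0)$. Next I would exploit the hypothesis quantitatively: since $V_g\colon A^{-\gamma}_0\to A^{-\gamma}_0$ is continuous (Proposition \ref{P.Con}) and injective, and its range $\{f\in A^{-\gamma}_0:\ f(0)=0\}=\Ker(\delta_0)$ is a closed subspace of $A^{-\gamma}_0$, the open mapping theorem shows that $V_g\colon A^{-\gamma}_0\to\Ker(\delta_0)$ is an isomorphism. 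Hence there exists $c>0$ with $\|h\|_{-\gamma}\le c\|V_gh\|_{-\gamma}$ for all $h\in A^{-\gamma}_0$.

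The heart of the argument is a dilation performed on the \emph{image} side. Fix $u\in A^{-\gamma}$ with $u(0)=0$ and put $f:=u'/g'\in H(\D)$. For $0<r<1$ set $u_r(z):=u(rz)$. Then $u_r$ is bounded on $\overline{\D}$, so $u_r\in A^{-\gamma}_0$, while $u_r(0)=u(0)=0$; by the hypothesis $V_g(A^{-\gamma}_0)=\{f\in A^{-\gamma}_0:\ f(0)=0\}$ and the injectivity of $V_g$ it follows that $u_r'/g'\in A^{-\gamma}_0$. Applying the lower bound of the previous paragraph to $h=u_r'/g'$, using $V_g(u_r'/g')=u_r$ together with the elementary dilation estimate $\|u_r\|_{-\gamma}\le\|u\|_{-\gamma}$, I obtain
\[
\|u_r'/g'\|_{-\gamma}\le c\|u_r\|_{-\gamma}\le c\|u\|_{-\gamma},\qquad 0<r<1,
\]
a bound uniform in $r$.

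Finally I would let $r\to 1^-$. Since $u_r\to u$, and hence $u_r'\to u'$, locally uniformly on $\D$, for each fixed $z\in\D$ the quantity $(1-|z|)^\gamma|u_r'(z)/g'(z)|$ converges to $(1-|z|)^\gamma|f(z)|$; as each term is bounded by $c\|u\|_{-\gamma}$, so is the limit, and taking the supremum over $z$ gives $\|f\|_{-\gamma}\le c\|u\|_{-\gamma}$. Thus $f\in A^{-\gamma}$ and $V_gf=u-u(0)=u$, so $u\in V_g(A^{-\gamma})$, which completes the proof. The point to watch — and the main obstacle — is the transfer of the lower estimate from $A^{-\gamma}_0$ to $A^{-\gamma}$: dilating the \emph{preimage} $f$ rather than $u$ would force one to compare $f_rg'$ with $(fg')_r=f_r\,g_r'$, and the ratio $g'/g_r'$ need not be bounded, so the naive approximation breaks down. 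Dilating $u$ sidesteps this entirely, because the controlling norm $\|u_r\|_{-\gamma}$ is dominated directly by $\|u\|_{-\gamma}$; this also renders any appeal to bitranspose or bidual identifications unnecessary.
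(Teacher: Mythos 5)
Your proof is correct and follows essentially the same route as the paper's: both arguments obtain the lower bound $\|h\|_{-\gamma}\leq c\|V_gh\|_{-\gamma}$ for $h\in A^{-\gamma}_0$ from the open mapping theorem applied to the continuous bijection $V_g\colon A^{-\gamma}_0\to\{f\in A^{-\gamma}_0:\ f(0)=0\}$, then approximate the given $u\in A^{-\gamma}$ with $u(0)=0$ by functions of $A^{-\gamma}_0$ vanishing at $0$ whose $\|\cdot\|_{-\gamma}$-norms are uniformly controlled and which converge to $u$ locally uniformly, identify their $V_g$-preimages as $u_r'/g'$ via injectivity, and pass to the pointwise limit to conclude $u'/g'\in A^{-\gamma}$. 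The only difference is the approximation device: you use the dilates $u_r(z):=u(rz)$ together with the elementary estimate $\|u_r\|_{-\gamma}\leq\|u\|_{-\gamma}$, whereas the paper uses polynomials with controlled norms (citing Bierstedt--Summers \cite{BS}), so your version is marginally more self-contained.
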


\begin{proof} Let $f\in A^{-\gamma}$ satisfy $f(0)=0$. Select a sequence $(q_n)_{n\in\N}$ of polynomials satisfying $\|q_n\|_{-\gamma}\leq \|f\|_{-\gamma}$ for all $n\in\N$, and $q_n\to f$ in $H(\D)$ as $n\to\infty$, \cite{BS}. In particular,  $q_n(0)\to f(0)=0$ as $n\to\infty$, and hence, $p_n:=q_n-q_n(0)\to f$ in $H_0(\D)\subseteq H(\D)$ as $n\to\infty$. Therefore, $\|p_n\|_{-\gamma}\leq \|q_n\|_{-\gamma}+|q_n(0)|\leq M$ for all $n\in\N$ and some $M>0$. Recall that $V_g\colon H(\D)\to H_0(\D)$ is bijective and continuous. Moreover,  $V^{-1}_g\colon H_0(\D)\to H(\D)$ is given by  $h\mapsto \frac{h'}{g'}$ for $h\in H_0(\D)$. Since $H_0(\D)$ is a closed subspace of $H(\D)$, it is a Fr\'echet space and so, by the open mapping theorem, $V_g^{-1}$ is
	 continuous. Hence,  $V^{-1}_gp_n\to V^{-1}_gf$ in $H(\D)$ as $n\to\infty$. On the other hand, for each $n\in\N$ we have that $p_n(0)=0$ and $p_n\in A^{-\gamma}_0$. So, by the assumption and the injectivity of $V_g$ it follows that $V^{-1}_gp_n\in A^{-\gamma}_0$ for all $n\in\N$.
	
Now, observe that the operator $V_g\colon A_0^{-\gamma}\to \{f\in A^{-\gamma}_0:\ f(0)=0\}$ is continuous and bijective. By the open mapping theorem the inverse operator $V_g^{-1}\colon \{f\in A^{-\gamma}_0:\ f(0)=0\}\to A_0^{-\gamma}$ exists and  is continuous. So, there exists $D>0$ such that $\|V_g^{-1}p_n\|_{-\gamma}\leq D$ for all $n\in\N$. Accordingly, for each $n\in\N$, we have
\[
(1-|z|)^\gamma |(V_g^{-1}p_n)(z)|\leq D,\quad z\in\D.
\]
Since $V^{-1}_gp_n\to V^{-1}_gf$ in $H(\D)$ as $n\to\infty$, it follows that
\[
(1-|z|)^\gamma |(V_g^{-1}f)(z)|\leq D,\quad z\in\D.
\]
This implies that $V_g^{-1}f\in A^{-\gamma}$. So, $f=V_g(V_g^{-1}f)\in V_g(A^{-\gamma})$ and we can conclude that $\{f\in A^{-\gamma}:\ f(0)=0\}\subseteq V_g(A^{-\gamma})$.

The reverse inclusion $V_g(A^{-\gamma})\subseteq \{f\in A^{-\gamma}:\ f(0)=0\}$ is clear.
	\end{proof}

\begin{prop}\label{P.NoCl}  Let $\gamma>0$ and $g\in\cB$ satisfy $|g'|>0$ on $\D$ and $\frac{1}{g'}\in H^\infty$. Suppose that  there exists $w\in\C$ with $|w|=1$  such that $\lim_{r\to 1^-}|g'(rw )|(1-r)=0$. Then both of the operators $V_g\colon A^{-\gamma}\to A^{-\gamma}$ and $V_g\colon A^{-\gamma}_0\to A^{-\gamma}_0$ fail to have closed range.
	\end{prop}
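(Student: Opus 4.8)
The plan is to exhibit a function that lies in the optimal domain $[V_g,A^{-\gamma}]$ but not in $A^{-\gamma}$, thereby showing $A^{-\gamma}\subsetneqq[V_g,A^{-\gamma}]$ (and similarly at the level of the little spaces), and then to invoke the closed-range machinery developed in Section 2. More precisely, the hypotheses here --- $|g'|>0$ on $\D$, $\frac{1}{g'}\in H^\infty$, and the decay $\lim_{r\to1^-}|g'(rw)|(1-r)=0$ along some radius --- are exactly those of Example \ref{E1}. So the first step is simply to quote Example \ref{E1}: under these assumptions $A^{-\gamma}$ is a \emph{proper} subspace of $[V_g,A^{-\gamma}]$. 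This is the geometric heart of the matter, and it has already been done.

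**From proper inclusion to non-closed range.**

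With $A^{-\gamma}\subsetneqq[V_g,A^{-\gamma}]$ in hand, I would feed this into the abstract framework. By Proposition \ref{P.D1} the space $[V_g,A^{-\gamma}]$ is a Banach space, and $V_g\in\cL(H(\D))$ is injective, so all the hypotheses of Corollary \ref{C1} are met with $T=V_g$ and $X=A^{-\gamma}$. The contrapositive of Corollary \ref{C1}(i) --- or equivalently a direct application of Proposition \ref{P5} --- says that if $V_g\colon A^{-\gamma}\to A^{-\gamma}$ \emph{had} closed range, then the inclusion $A^{-\gamma}\hookrightarrow[V_g,A^{-\gamma}]$ would be surjective, forcing $A^{-\gamma}=[V_g,A^{-\gamma}]$. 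This contradicts the proper inclusion just established. Hence $V_g\colon A^{-\gamma}\to A^{-\gamma}$ fails to have closed range.

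**The little-space version.**

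For $V_g\colon A^{-\gamma}_0\to A^{-\gamma}_0$ I would run the identical argument one level down. The additional hypotheses of Example \ref{E1} needed for the little-space conclusion involve a growth bound of the form $|g'(rw)|(1-r)^{1/2}\le c$; I should check that the stated decay hypothesis $\lim_{r\to1^-}|g'(rw)|(1-r)=0$ together with $\frac{1}{g'}\in H^\infty$ indeed supplies what Example \ref{E1} requires to conclude $A^{-\gamma}_0\subsetneqq[V_g,A^{-\gamma}_0]$. Granting that, Proposition \ref{P.D1} gives that $[V_g,A^{-\gamma}_0]$ is a Banach space, $V_g$ is injective, and Corollary \ref{C1} (applied now with $X=A^{-\gamma}_0$) yields that $V_g\colon A^{-\gamma}_0\to A^{-\gamma}_0$ fails to have closed range.

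**Where the real work sits.**

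The genuine content --- the construction of the witness function $f(z)=\frac{1}{g'(z)(1-\overline{w}z)^{\gamma+1}}$ and the verification that $fg'\in A^{-(\gamma+1)}$ while $f\notin A^{-\gamma}$ --- is entirely absorbed into Example \ref{E1}, so the main obstacle is really just confirming that the present hypotheses match those of that example (in particular for the $A^{-\gamma}_0$ case). Once the proper inclusions are secured, the passage to non-closed range is a mechanical appeal to the Section~2 results and presents no difficulty. I would therefore expect the proof to be short: cite Example \ref{E1} for the two proper inclusions, then cite Corollary \ref{C1} (or Proposition \ref{P5}) twice.
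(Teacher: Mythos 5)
Your reduction ``proper inclusion $\Rightarrow$ non-closed range'' is invalid, and this is the crux of the proposal. What you call ``the contrapositive of Corollary \ref{C1}(i)'' is in fact its \emph{converse}: Corollary \ref{C1}(i) says that surjectivity of the inclusion $J_{[T,X]}$ implies closed range of $T$, and its contrapositive (which is Corollary \ref{C1}(ii)) says that failure of closed range implies $X\subsetneqq[T,X]$. Neither statement, nor Proposition \ref{P5}, asserts that closed range of $V_g$ forces $A^{-\gamma}=[V_g,A^{-\gamma}]$. Proposition \ref{P5} yields only: $V_g\colon A^{-\gamma}\to A^{-\gamma}$ has closed range if and only if $A^{-\gamma}$ is a \emph{closed} subspace of $[V_g,A^{-\gamma}]$. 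A priori this is perfectly compatible with $A^{-\gamma}$ being a proper closed subspace of $[V_g,A^{-\gamma}]$, so the proper inclusion from Example \ref{E1} produces no contradiction by itself. To upgrade ``closed'' to ``equal'' one needs a density statement, and supplying it is exactly the real work in the paper's proof: there one shows $\cP_0\subseteq V_g(A_0^{-\gamma})$ (using $\frac{1}{g'}\in H^\infty$, via the explicit preimages $nz^{n-1}/g'$) and that $\cP_0$ is dense in $\{f\in A_0^{-\gamma}:\ f(0)=0\}$, so that \emph{if} $V_g(A_0^{-\gamma})$ were closed it would equal $\{f\in A_0^{-\gamma}:\ f(0)=0\}$; then Lemma \ref{L.R2} transfers this to $V_g(A^{-\gamma})=\{f\in A^{-\gamma}:\ f(0)=0\}$, Lemma \ref{L.R1}(i) converts that into $A^{-\gamma}=[V_g,A^{-\gamma}]$, and only at that point does Example \ref{E1} give the contradiction. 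Polynomials are dense in $A_0^{-\gamma}$ but not in $A^{-\gamma}$, which is why the paper must argue at the little-space level first and then transfer; none of this structure is present, or replaceable, in your two-line appeal to Section 2.

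There is a second gap: your little-space argument needs $A_0^{-\gamma}\subsetneqq[V_g,A_0^{-\gamma}]$, which you hope to read off from Example \ref{E1}. But the little-space part of that example requires the \emph{additional} hypothesis $|g'(rw)|(1-r)^{1/2}\leq c$, and this does not follow from $\lim_{r\to 1^-}|g'(rw)|(1-r)=0$ (consider $|g'(rw)|\sim(1-r)^{-3/4}$, for which the first quantity is unbounded while the second tends to $0$). So the check you defer would in fact fail, and the proper inclusion at the $A_0^{-\gamma}$ level is simply not available under the stated hypotheses. The paper's proof never uses it: the contradiction is derived from the closed-range assumption on $A_0^{-\gamma}$ using only the big-space part of Example \ref{E1}, and the statement for $A^{-\gamma}$ then follows because (as in the proof of Proposition \ref{P.Notclosed}(i)) injectivity plus the open mapping theorem show that closed range of $V_g$ on $A^{-\gamma}$ would make $V_g$ bounded below there, hence bounded below on $A_0^{-\gamma}$, forcing closed range on $A_0^{-\gamma}$ as well.
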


\begin{proof}  Observe that the set $\cP_0$ of all polynomials on $\D$ vanishing at $0$ is contained in $V_g(A^{-\gamma}_0)$. Indeed, for a fixed $n\in\N$, the function $f(z):=\frac{nz^{n-1}}{g'(z)}$, for $z\in\D$, clearly belongs to $A^{-\gamma}_0$ because  $\frac{1}{g'}\in H^\infty$ and $nz^{n-1}\in A^{-\gamma}_0$. Moreover,
	\[
	(V_gf)(z)=\int_0^zf(\xi)g'(\xi)\,d\xi=\int_0^z n\xi^{n-1}\,d\xi=z^n,\quad z\in\D,
	\]
	and hence, $z^n\in V_g(A^{-\gamma}_0)$. Since $\{z^n:\ n\in\N\}\subseteq V_g(A^{-\gamma}_0)$, it follows that $\cP_0$ is contained in $V_g(A^{-\gamma}_0)$.
	
	We point out that  $ \cP_0$ is also dense in the closed subspace $\{f\in A_0^{-\gamma}:\ f(0)=0\}$ of $A^{-\gamma}_0$. To see this fix $h\in \{f\in A_0^{-\gamma}:\ f(0)=0\}$. Since $\cP$ is dense in $A^{-\gamma}_0$, there exists a sequence $(q_n)_{n\in\N}$ of polynomials such that $q_n\to h$ in $A^{-\gamma}_0$  as $n\to\infty$. In particular, also   $q_n(0)\to h(0)=0$ as $n\to\infty$. Therefore, the polynomials $p_n:=q_n-q_n(0)\in \cP_0$, for all $n\in\N$,  and satisfy  $p_n\to h$ in $A^{-\gamma}_0$  as $n\to\infty$.
	
	Since $\cP_0\subseteq V_g(A^{-\gamma}_0)\subseteq \{f\in A^{-\gamma}_0:\ f(0)=0\}$ and  $\cP_0$ is dense in $\{f\in A^{-\gamma}_0:\ f(0)=0\}$, it follows that also $V_g(A^{-\gamma}_0)$ is dense in $\{f\in A^{-\gamma}_0:\ f(0)=0\}$. Suppose that $V_g\colon A^{-\gamma}_0\to A^{-\gamma}_0$ has closed range, in which case $V_g(A^{-\gamma}_0)= \{f\in A^{-\gamma}_0:\ f(0)=0\}$. By Lemma \ref{L.R2} it follows that $V_g(A^{-\gamma})= \{f\in A^{-\gamma}:\ f(0)=0\}$.  In view of Lemma \ref{L.R1}(i) this implies that $A^{-\gamma}=[V_g, A^{-\gamma}] $ and hence, the operator  $V_g\colon A^{-\gamma}\to A^{-\gamma}$ has closed range (cf. Proposition \ref{P5} or Corollary \ref{C1}(ii)). But, $A^{-\gamma}=[V_g, A^{-\gamma}] $  is in  contradiction with Example \ref{E1}. So, we can conclude that both of the operators  $V_g\colon A^{-\gamma}\to A^{-\gamma}$ and $V_g\colon A^{-\gamma}_0\to A^{-\gamma}_0$ fail to  have closed range.
\end{proof}

We conclude with some relevant examples concerning optimal domain spaces of $V_g$ acting in Korenblum spaces.

\begin{example}\label{E2}\rm  (i) If $g$ is non-constant and  $g'\in H^\infty$ then, for each $\gamma >0$, we have $A^{-(\gamma+1)}\subseteq [V_g, A^{-\gamma}]$. Indeed, if $f\in A^{-(\gamma+1)}$, then also $fg'\in A^{-(\gamma+1)}$ because   $g'\in H^\infty$. Moreover, $g'\in H^\infty$ implies that $g\in \cB$. So, by Proposition \ref{P.Description}(i), it follows that $f\in  [V_g, A^{-\gamma}]$. In particular, $ [V_g, A^{-\gamma}]$ is genuinely larger than $A^{-\gamma}$.
	
	(ii) Let $g\in \cB$ satisfy $|g'|>0$ on $\D$ and $\frac{1}{g'}\in H^\infty$. Then, for each $\gamma >0$,  we have $[V_g, A^{-\gamma}]\subseteq A^{-(\gamma+1)}$. Indeed, if $f\in [V_g, A^{-\gamma}]$, then $f\in H(\D)$ and $fg'\in A^{-(\gamma+1)}$ (cf. Proposition \ref{P.Description}(i)). Since $\frac{1}{g'}\in H^\infty$, it follows that $f=\frac{1}{g'}(fg')\in A^{-(\gamma+1)}$.

	(iii) Let $g(z);=z^n$ for all $z\in\D$ and some $n\in\N$. Then, for each $\gamma >0$,  we have that  $A^{-(\gamma+1)}\subseteq [V_g, A^{-\gamma}]$ by part (i), as $g'\in  H^\infty$.
	
	If $g(z):=z$, for $z\in\D$, then  $g'(z)=1$ for all $z\in\D$ and so both $g', \frac{1}{g'}\in H^\infty$. By parts (i) and (ii) it follows that $[V_g, A^{-\gamma}]= A^{-(\gamma+1)}$.

	Let $g(z):=z^2$,  for  $z\in\D$, in which case  $g'(z)=2z$ for all $z\in\D$. Given  $f\in [V_g, A^{-\gamma}]$,  Proposition \ref{P.Description}(i) implies that $f\in H(\D)$ and $(fg')(z)=2zf(z)\in A^{-(\gamma+1)}$. Since $(zf(z))(0)=0$, it follows that  $zf(z)\in \cE_v^\infty$, where $v(r):=(1-r)^{\gamma+1}$
	for $r\in [0,1)$ and $\cE^\infty_v=\{h\in A^{-(\gamma+1)}:\ h(0)=0\}$. Hence, by Proposition \ref{P_C1}(i) we see that $f=T(zf(z))\in A^{-(\gamma+1)}$. So, $[V_g, A^{-\gamma}]\subseteq A^{-(\gamma+1)}$. Accordingly, $[V_g, A^{-\gamma}]= A^{-(\gamma+1)}$. Since $\frac{1}{g'}\not\in H^\infty$, this condition is sufficient (cf. part (ii)) but, not necessary.
	
	Finally, if   $g(z)=z^n$ for all $z\in\D$ (hence, $g'(z)=nz^{n-1}$ for all $z\in\D$) then, for any given $f\in [V_g, A^{-\gamma}]$, Proposition \ref{P.Description}(i) implies that $f\in H(\D)$ and $(fg')(z)=nf(z)z^{n-1}\in A^{-(\gamma+1)}$. To conclude that $f\in A^{-(\gamma+1)}$ it suffices to repeat $(n-1)$ times the argument above for the case $n=2$, that is, to apply  $(n-1)$ times  Proposition \ref{P_C1}(i). Accordingly, $[V_g, A^{-\gamma}]= A^{-(\gamma+1)}$ for all functions $g(z):=z^n$ with $n\in\N$.
	
	(iv) Let $g\in \cB$. For $0<\beta\leq \gamma$, we have seen  that $A^{-\beta}\subseteq A^{-\gamma}\subseteq [V_g, A^{-\gamma}]$.
	
	Now, let $0<\gamma<\beta$. Suppose $A^{-\beta}\subseteq [V_g, A^{-\gamma}]$. Then, for any $f\in A^{-\beta}$,  it follows (cf. Proposition \ref{P.Description}(i)) that $f\in H(\D)$ and $fg'\in A^{-(\gamma+1)}$. This implies that the multiplication operator $M_{g'}\colon H(\D)\to H(\D)$ satisfies $M_{g'}(A^{-\beta})\subseteq A^{-(\gamma+1)}$. Since $M_{g'}\in \cL(H(\D))$, it follows that the linear  operator $M_{g'}\colon A^{-\beta}\to A^{-(\gamma+1)}$ is  defined and has closed graph. Hence, $M_{g'}\in \cL(A^{-\beta},A^{-(\gamma+1)})$. By \cite[Proposition 3.1]{Cont-Diaz} this implies that
	\begin{equation}\label{c}
	\sup_{z\in\D}\frac{(1-|z|)^{\gamma+1}|g'(z)|}{(1-|z|)^\beta}<\infty.
	\end{equation}
	If $\beta>(\gamma+1)$, then \eqref{c} yields that  $\lim_{|z|\to 1^-}|g'(z)|=0$. Accordingly, $g'(z)=0$ for all $z\in\D$ and hence, $g$ is a constant function.
	If $\gamma<\beta\leq(\gamma+1)$, then \eqref{c} yields that $g'\in A^{-(\gamma+1-\beta)}$. In particular, for $\beta=\gamma+1$ we can conclude that $g'\in H^\infty$.
	
	(v) Let $g\in \cB$ and $\gamma<\beta<(\gamma+1)$. Then $A^{-\beta}\subseteq [V_g, A^{-\gamma}]$ if and only if  $g'\in A^{-(\gamma+1-\beta)}$.
	Indeed, if  $A^{-\beta}\subseteq [V_g, A^{-\gamma}]$, then  part (iv) implies that $g'\in A^{-(\gamma+1-\beta)}$. Conversely, suppose that $g'\in H^\infty$. Then, for each $f\in A^{-\beta}$, we have that
	\begin{align*}
	&\sup_{z\in \D}(1-|z|)^{\gamma+1}|(fg')(z)|= \\
	&\quad\sup_{z\in\D}(1-|z|)^\beta|f(z)|(1-|z|)^{\gamma+1-\beta}|g'(z)|\leq \|f\|_{-\beta}\|g'\|_{-(\gamma+1-\beta)}<\infty.
	\end{align*}
	Accordingly, $fg'\in A^{-(\gamma+1)}$. By Proposition \ref{P.Description}(i) we can conclude that $f\in [V_g, A^{-\gamma}]$.
	
	(vi) Observe,  for $0<\beta<(\gamma+1)$, that the condition $g'\in A^{-(\gamma+1-\beta)}$ implies that $g\in \cB_0$.
	\end{example}

\vspace{.1cm}

\textbf{Acknowledgements.} The research of A.A. Albanese was partially supported by GNAMPA of INDAM.
The research of J. Bonet was partially supported by the project
PID2020-119457GB-100 funded by MCIN/AEI/10.13039/501100011033 and ``ERFD A way of making Europe'', and by Generalitat Valenciana project CAICO/2023/242. 


\bibliographystyle{plain}

\end{document}